\documentclass{amsart}
\usepackage{amsmath}
\usepackage{amsthm}
\usepackage{amssymb}
\usepackage{enumerate}

\theoremstyle{plain}

\theoremstyle{change}
\newtheorem{thm}{Theorem}[subsection]
\newtheorem{prop}[thm]{Proposition}
\newtheorem{lem}[thm]{Lemma}
\newtheorem{cor}[thm]{Corollary}

\theoremstyle{remark}
\newtheorem{rem}[thm]{\bf{Remark}}
\newtheorem{rems}[thm]{\bf{Remarks}}

\theoremstyle{definition}
\newtheorem{defn}[thm]{Definition}
\numberwithin{equation}{subsection}

\renewcommand{\labelenumi}{(\theenumi)}

\def\Vec#1{\mbox{\boldmath $#1$}}

\usepackage{amscd}


\title{On types for unramified $p$-adic unitary groups}
\author{Kazutoshi Kariyama}

\begin{document}
\maketitle

\begin{abstract}
Let $F$ be a non-archimedean local field of residue characteristic neither 2 nor 3 equipped with a galois involution with fixed field $F_0$, and let $G$ be a symplectic group over $F$ or an unramified unitary group over $F_0$. Following the methods of Bushnell-Kutzko for $GL(N,F)$, we define an analogue of a simple type attached to a certain skew simple stratum, and realize a type in $G$.  In particular, we obtain an irreducible supercuspidal representation of $G$ like $GL(N,F)$.
\end{abstract}

\vspace{2mm}
{\footnotesize
AMS subject classification: 22E50, 22D99

Keywords: $p$-adic unitary group, type, supercuspidal representation, Hecke algebra}
\vspace{2mm}

\begin{center}
I{\scriptsize NTRODUCTION}
\end{center}

Let $N$ be an integer $\ge 2$, and $V$ an $N$-dimensional vector space over a non-archimedean local field $F$. Put $A = \mathrm{End}_F(V)$ and $G = \mathrm{Aut}_F(V) \simeq GL(N,F)$. 

From Bushnell-Kutzko \cite{BK1}, in which a complete classification of the irreducible smooth representations of $G$ is given, we obtain the following results: 
A stratum in $A$ is a 4-tuple $[\mathfrak{A},n,0,\beta]$ which consists of a hereditary $\mathfrak{o}_F$-order $\mathfrak{A}$ in $A$, an integer $n > 0$, and an element $\beta \in \mathfrak{P}^{-n}$, where $\mathfrak{o}_F$ is the maximal order of $F$, and $\mathfrak{P}$ is the Jacobson radical of $\mathfrak{A}$.
We define a compact open subgroup $J = J(\beta,\mathfrak{A})$ of $G$ and its normal subgroups $H^1(\beta,\mathfrak{A}),\ J^1(\beta,\mathfrak{A})$ (\cite[(3.1)]{BK1}), associated with a simple stratum $[\mathfrak{A},n,0,\beta]$ (\cite[(1.5)]{BK1}).
Let $\theta$ be a simple character, which is an abelian character of $H^1 = H^1(\beta,\mathfrak{A})$ (\cite[(3.2)]{BK1}). Then there is a unique irreducible representation $\eta$ of $J^1 = J^1(\beta,\mathfrak{A})$ such that $\eta \vert H^1$ contains $\theta$ (\cite[(5.1)]{BK1}), 
and is an irreducible representation $\kappa$ of $J$, called a $\beta$-extension of $\eta$, which is an extension of $\eta$ and has the $G$-intertwining $J B^\times J$ (\cite[(5.2)]{BK1}), where $B$ is the $G$-centralizer of $\beta$.

Suppose that $\mathfrak{A}$ is principal. The group $J/J^1$ is isomorphic to a Levi subgroup of $GL(R,k_E)$, where $R = \dim_E(V)$ and $k_E$ denotes the residue class field of $E$. A certain irreducible cuspidal representation of $J/J^1$ is chosen and is inflated to the representation, $\sigma$, of $J$.
Then an irreducible representation $\lambda$ of $J$ is defined by $\lambda = \kappa \otimes \sigma$, which is called a simple type (of positive level) (\cite[(5.5)]{BK1}). If $\mathfrak{A} \cap B^\times$ is a maximal compact subgroup of $B^\times$, then the representation $(J,\lambda)$ is a $[G,\pi]_G$-type in $G$, for some irreducible supercuspidal representation $\pi$ of $G$ (\cite[(6.2)]{BK1}, \cite{BK2}). Such a simple type $(J,\lambda)$ is called maximal.

Associated with a simple stratum $[\mathfrak{A},n,0,\beta]$, there is a choice of a parabolic subgroup $P = MN$ of $G$ with a Levi component $M$  (\cite[(7.1)]{BK1}). From a simple type $(J,\lambda)$, we can define a certain pair of a compact open subgroup $J_P$ of $G$ and an irreducible representation $\lambda_P$ of $J_P$ (\cite[(7.2)]{BK1}).
Then there is an irreducible supercuspidal representation $\pi$ of $M$ such that $(J_P \cap M,\lambda_P \vert J_P \cap M)$ is an $[M,\pi]_M$-type in $M$ (\cite[(7.2)]{BK1}, \cite{BK2}), and $(J_P,\lambda_P)$ is a $G$-cover of $(J_P \cap M,\lambda_P \vert J_P \cap M)$ (\cite[(7.3)]{BK1}, \cite{BK2}). Hence $(J_P,\lambda_P)$ is an $[M,\pi]_G$-type in $G$ (\cite[(8.3)]{BK2}).
Moreover, the Hecke algebra of $(J_P,\lambda_P)$ is isomorphic to an affine Hecke algebra (\cite[(5.6)]{BK1}).

Let $F$ be a non-archimedean local field of residual characteristic not 2 equipped with a galois involution with fixed field $F_0$, and $V$ a finite dimensional $F$-vector space equipped with a non-degenerate hermitian form $h$. Let $G$ be the unitary group of $(V,h)$ over $F_0$. Put $A = \mathrm{End}_F(V)$ and $\widetilde{G} = \mathrm{Aut}_F(V)$ here.
From Stevens \cite{St1}, \cite{St2} and \cite{St3}, we obtain the following results: A skew semi-simple stratum $[\mathfrak{A},n,0,\beta]$ in $A$ is defined, and we obtain the subgroups $H^1(\beta,\mathfrak{A}), J^1(\beta,\mathfrak{A})$ and $J(\beta,\mathfrak{A})$ of $\widetilde{G}$ as above. Restricting them to $G$, we obtain the subgroups $H_-^1 = H_-^1(\beta,\mathfrak{A}), J_-^1 = J_-^1(\beta,\mathfrak{A})$, and $J_- = J_-(\beta,\mathfrak{A})$ of $G$, respectively.
A skew semi-simple character $\theta_-$ of $H_-^1$ is defined as well, and we can similarly give a unique irreducible representation $\eta_-$ of $J_-^1$ such that $\eta_- \vert H_-^1$ contains $\theta_-$.
In particular, if the $A$-centralizer of $\beta$ is a maximal commutative semisimple algebra of $A$, there is an irreducible representation $\kappa_-$ of $J_-$ such that $\kappa_- \vert J_-^1 = \eta_-$, which is a $\beta$-extension of $\eta_-$ in a sense. The representation $(J_-,\kappa_-)$ induces an irreducible supercuspidal representation of $G$, and so it is a type in $G$ (cf. \cite{Ad2}, \cite{Ka}, \cite{Yu}).
In general, it is very difficult to prove the existence of a $\beta$-extension of $\eta_-$ even for a skew simple stratum $[\mathfrak{A},n,0,\beta]$ in $A$.

Now suppose that $h$ is a non-degenerate alternating form on a $2n$-demensional $F$-vector space $V$. Then $G$ is a symplectic group $Sp_{2n}(F)$. Recently, by Blondel \cite{Bl}, the following results for $G = Sp_{2n}(F)$ are obtained:  
Let $\pi$ be a self-contragradient supercuspidal irreducible representation of $GL(n,F)$ (cf. \cite{GK}, \cite{Ad1}), and $(J_0,\lambda_0)$ a maximal simple type in $GL(n,F)$ for the inertial class $[GL(n,F),\pi]_{GL(n,F)}$.
We can take a special simple stratum $[\mathfrak{A},n,0,\beta]$ in $A = \mathrm{End}_F(V)$ such that the associated parabolic subgroup $P = MN$ of $GL(2n,F)$ satisfies $M \simeq GL(n,F) \times GL(n,F)$ and leads to a Siegel parabolic subgroup $P_0 = M_0N_0$  of $G$ with $M_0 \simeq GL(n,F)$.
Then there is a simple type $(J,\lambda)$ in $GL(2n,F)$ attached to $[\mathfrak{A},n,0,\beta]$ such that $J \cap M \simeq J_0 \times J_0$ and $\lambda \vert (J \cap M) \simeq \lambda_0 \otimes \lambda_0$.
Thus we can construct an irreducible representation $(J_{P},\lambda_{P})$ in $GL(2n,F)$ from $(J,\lambda)$ as above, and restrict $(J_{P},\lambda_{P})$ to $G$ so as to obtain an $[M_0,\pi]_G$-type in $G$ as a $G$-cover of $(J_0,\lambda_0)$. The methods of \cite{Bl} construct a type in $G$ without using a simple type for $G$.

Recently, the constructing problem of (simple) types for $GL(N,D)$, with $D$ a central division $F$-algebra, is solved by S\'echerre \cite{Sc1, Sc2, Sc3}.

In this paper, let $F$ be a non-archimedean local field of residual characteristic neither 2 nor 3 equipped with a galois involution with fixed field $F_0$. We assume that $F/F_0$ is an unramified field extension, and let $h$ be a non-degenerate $F/F_0$-skew-hermitian form on a vector space $V$ of dimension $2n$ over $F$ such that the anisotropic part is zero. Put $G = U(V,h)$. 
Following the methods of Bushnell-Kutzko \cite{BK1}, we define a simple type for $G$ attached to a certain skew simple stratum in $A = \mathrm{End}_F(V)$, which is called good (see Definition 2.1.1), and realize a type in $G$.
 A simple type in $Sp_{2n}(F)$, attached to a good skew simple stratum $[\mathfrak{A},n,0,\beta]$ with $\mathfrak{A}$ principal and with $e(\mathfrak{B} \vert \mathfrak{o}_{F[\beta]}) = 2$, gives the one constructed in Blondel \cite{Bl}, where $e(\mathfrak{B} \vert \mathfrak{o}_{F[\beta]})$ denotes the $\mathfrak{o}_{F[\beta]}$-period of the lattice chain in $V$ defining the $\mathfrak{A}$-centralizer $\mathfrak{B}$ of $\beta$.

The contents of this paper are as follows: In sections 1 and 2, from \cite{BK1} and \cite{St2}, we recall the definitions of the skew simple stratum $[\mathfrak{A},n,0,\beta]$, the compact open subgroups $H^t(\beta,\mathfrak{A}), J^t(\beta,\mathfrak{A})$ of $G$, for $t = 0, 1$, and the skew simple character $\theta_- \in \mathcal{C}_-(\mathfrak{A},0,\beta)$.
We define a good skew simple stratum $[\mathfrak{A},n,0,\beta]$, which implies that there are hereditary $\mathfrak{o}_F$-orders $\mathfrak{A}_m \subset \mathfrak{A} \subset \mathfrak{A}_M$ in $A = \mathrm{End}_F(V)$ such that $\Vec{U}(\mathfrak{B}_m) = \mathfrak{A}_m \cap B \cap G$ is an Iwahori subgroup of $B \cap G$ and $\Vec{U}(\mathfrak{B}_M) = \mathfrak{A}_M \cap B \cap G$ is a special (good) maximal compact subgroup of $B \cap G$, where $B$ is the $A$-centralizer of $\beta$. This property is used to prove the existence of a $\beta$-extension.

In section 3, let $[\mathfrak{A},n,0,\beta]$ be a good skew simple stratum in $A$. From \cite{St3}, there is a unique irreducible representation $\eta_-$ of $J_-^1(\beta,\mathfrak{A})$ associated with a skew simple character $\theta_-$. Modulo some claim, we can prove that there is a $\beta$-extension $\kappa_-$ of $\eta_-$, which is, by definition, a representation of $J_- = J_-(\beta,\mathfrak{A})$ satisfying (1) $\kappa_- \vert J_- = \eta_-$, (2) the $G$-intertwining of $\kappa_-$ contains $J_-.B \cap G.J_-$.

In section 4, we have a parabolic subgroup $P = MN_u$ of $G$, with Levi component $M$ and unipotent radical $N_u$, associated with a good skew simple stratum $[\mathfrak{A},n,0,\beta]$ in $A$. We see that $H_-^t(\beta,\mathfrak{A}), J_-^t(\beta,\mathfrak{A})$, $t = 0, 1$, have Iwahori decompositions relative to $P = MN_u$, and prove the claim in section 3.

In section 5, let $[\mathfrak{A},n,0,\beta]$ be a good skew simple stratum in $A$ with $\mathfrak{A}$ principal. We choose a certain irreducible cuspidal representation $\sigma_-$ of $J_-(\beta,\mathfrak{A})/J_-^1(\beta,\mathfrak{A})$. 
From this $\sigma_-$, together with a $\beta$-extension $\kappa_-$, we define an irreducible representation $\lambda_- = \kappa_- \otimes \sigma_-$ of $J_-(\beta,\mathfrak{A})$, which is an analogue of a simple type of positive level for $GL(N,F)$ of \cite[(5.5.10)]{BK1}.
Let $\Vec{W}$ be an affine Weyl group of $B \cap G$ with $B \cap G = \Vec{U}(\mathfrak{B}_m)\Vec{W}\Vec{U}(\mathfrak{B}_m)$, and put $\Vec{W}(\mathfrak{B}) = \{w \in \Vec{W} \mid \text{$w$ normalizes $\mathfrak{A} \cap M \cap B$} \}$.  
We prove that the $G$-intertwining of the simple type $(J_-,\lambda_-)$ is contained in $J_-\Vec{W}(\mathfrak{B})J_-$. It follows that if $\mathfrak{A} \cap B$ is a maximal compact subgroup of $G \cap B$, $(J_-,\lambda_-)$ induces an irreducible supercuspidal representation of $G$.
Moreover, we construct an irreducible representation $(J_{P,-},\lambda_{P,-})$, in the same way as \cite{BK1}, such that $(J_{P,-} \cap M,\lambda_{P,-} \vert J_{P,-} \cap M)$ is an $[M,\pi]_M$-type in $M$, for some irreducible supercuspidal representation $\pi$ of $M$.

In section 6, we study the Hecke algebra $\mathcal{H}(G,\lambda_{P,-})$ of $(J_{P,-},\lambda_{P,-})$, and then we prove that $(J_{P,-},\lambda_{P,-})$ is an $[M,\pi]_G$-type in $G$, and so is $(J_-,\lambda_-)$.

\section{Preliminaries}                 

\subsection{}                 

Let $F$ be a non-archimedean local field equipped with a galois involution\ $^-$, with the fixed field $F_0$. Let $\mathfrak{o}_F$ and $\mathfrak{p}_F$ be its maximal order and the maximal ideal of $\mathfrak{o}_F$, respectively, and $k_F = \mathfrak{o}_F/\mathfrak{p}_F$ the residue class field. Let $\varpi_F$ be a uniformizer of $F$.
We assume that the residual characteristic $p$ is not 2 and that $F/F_0$ is unramified (possibly $F = F_0$).

Let $N$ be an integer $\ge 4$. Let $V$ be an $N$-dimensional vector space over $F$, and put $A = \mathrm{End}_F(V) \simeq \mathbb{M}(N,F)$. Let $h$ be a non-degenerate anti-hermitian form on $V$ over $F/F_0$. 
We furthermore assume that the anisotropic part of $V$ is zero. 
Then $N$ must be even. Let\ $^-$\ be the adjoint (anti-)involution on $A$ defined by the form $h$. Put $\widetilde{G} = \mathrm{Aut}_F(V) \simeq GL(N,F)$, and define $\gamma$ to be the involution $x \mapsto \bar{x}\ ^{-1}$ on $\widetilde{G}$. Put $\Gamma = \{1, \gamma\}$.

We put
\begin{equation*}
G = \widetilde{G}^\Gamma = \{ g \in \widetilde{G} \mid h(gv,gw) = h(v,w),\ \text{for all}\ v, w \in V \}.
\end{equation*}
By the assumption, $G$ is a symplectic group over $F$ if $F = F_0$, and is an unramified unitary group over $F_0$ if $F \ne F_0$. We write $G = U(V,h)$.  We also put
\begin{equation*}
\mathfrak{G} = \{ a \in A \mid a + \bar{a} = 0 \}.
\end{equation*}
This is isomorphic to $\mathrm{Lie}\ G$.

Let $\mathbb{Z}$ and $\mathbb{C}$ denote the ring of rational integers and the field of complex numbers, respectively. For a ring $R$, let $R^\times$ denote the multiplicative group of invertible elements in $R$. For a finite field extension $E/F$, we denote by $\mathfrak{o}_E, \mathfrak{p}_E, k_E$ the objects for $E$ analogous to those above for $F$.

\subsection{}                        

We recall notation in \cite{BK1} and \cite{Mo2}.

For an $\mathfrak{o}_F$-lattice in $V$, we define the dual lattice $L^\#$ by
\begin{equation*}
L^\# = \{ v \in V \mid h(v,L) \subset \mathfrak{o}_F \}
\end{equation*}
(cf. \cite[1.1]{Mo2}). An $\mathfrak{o}_F$-{\it lattice chain} in $V$ is a set $\mathcal{L} = \{L_i \vert i \in \mathbb{Z}\}$ of $\mathfrak{o}_F$-lattices in $V$ which satisfies
\begin{enumerate}
 \item $L_i \supsetneq L_{i+1}$, for all $i \in \mathbb{Z}$,
 \item there is a positive integer $e$ such that $L_{i+e} = \mathfrak{p}_FL_i$, for all $i \in \mathbb{Z}$.
\end{enumerate}
This integer $e = e(\mathcal{L})$ is unique and is called the $\mathfrak{o}_F$-{\it period} of $\mathcal{L}$. 

A $\mathfrak{o}_F$-lattice chain $\mathcal{L}$ in $V$ is called {\it self-dual} (with respect to the form $h$) if $L \in \mathcal{L}$ implies $L^\# \in \mathcal{L}$. If $\mathcal{L}$ is self-dual, from \cite[Proposition 1.4]{Mo2}, there is a unique slice of the form:
\begin{equation*}
L^\#_{r-1} \supsetneq \cdots \supsetneq L^\#_0 \supset L_0 \supsetneq \cdots \supsetneq L_{r-1} \supset \varpi_FL^\#_{r-1},
\end{equation*}
for some integer $r \ge 1$, where possibly $L^\#_0 = L_0$ and/or $L_{r-1} = \varpi_FL^\#_{r-1}$. This slice is called a {\it self-dual slice} of $\mathcal{L}$.

Associated with an $\mathfrak{o}_F$-lattice chain $\mathcal{L}$ in $V$, a filtration on $A$ is given by
\begin{equation*}
\mathfrak{P}^n = \{x \in A \mid xL_i \subset L_{i+n},\ \text{for all}\ i \in \mathbb{Z} \},
\end{equation*}
for $n \in \mathbb{Z}$. In particular, $\mathfrak{A} = \mathfrak{A}(\mathcal{L}) = \mathfrak{P}^0$ is a hereditary $\mathfrak{o}_F$-order in $A$, and $\mathfrak{P}$ is its Jacobson radical. An $\mathfrak{o}_F$-lattice chain $\mathcal{L}$ in $V$ determines a valuation map $\nu_\mathfrak{A}: A \to \mathbb{Z}$ by
\begin{equation*}
\nu_\mathfrak{A}(x) = \max \{n \in \mathbb{Z} \mid x \in \mathfrak{P}^n \},\ \text{for}\ x \in A,
\end{equation*}
with $\nu_\mathfrak{A}(0) = \infty$. 

We obtain a family of compact open subgroups $\mathfrak{A} \cap \widetilde{G} = \mathfrak{A}^\times$ and $1 + \mathfrak{P}^n$ for integers $n \ge 1$, of $\widetilde{G}$.
If $\mathcal{L}$ is self-dual, $\mathfrak{A}^\times$ and $1 + \mathfrak{P}^n, n \ge 1$, are fixed by $\gamma$. So we obtain a family of compact open subgroups of $G$:
\begin{center}
$\Vec{U}(\mathfrak{A}) = (\mathfrak{A}^\times)^\Gamma = \mathfrak{A} \cap G$, \\
\vspace{1mm}
$\Vec{U}^n(\mathfrak{A}) = (1 + \mathfrak{P}^n)^\Gamma =  (1 + \mathfrak{P}^n) \cap G$,
\end{center}
for integers $n \ge 1$. Then $\{\Vec{U}^n(\mathfrak{A}) \mid n \ge 1\}$ is a filtration on $G$ by normal subgroups of $\Vec{U(}\mathfrak{A})$.

For an $\mathfrak{o}_F$-order $\mathfrak{A} = \mathfrak{A}(\mathcal{L})$ in $A$, we put
\begin{equation*}
\mathfrak{K}(\mathfrak{A}) = \{x \in \widetilde{G} \mid xL \in \mathcal{L},\ \text{for all}\ L \in \mathcal{L}\}.
\end{equation*}
Then we have $\mathfrak{K}(\mathfrak{A}) = \{x \in \widetilde{G} \mid x\mathfrak{A}x^{-1} = \mathfrak{A}\}$.

\subsection{}                       

Suppose that $\beta$ is an element in the Lie algebra $\mathfrak{G}$ such that the algebra $E = F[\beta]$ is a subfield of $A$. Then the involution $^-$ on $A$ fixes $E$. Put $E_0 = \{x \in E \mid \overline{x} = x\}$. We choose an $F$-linear form $\ell_0 : E_0 \to F$ which satisfies
\begin{equation*}
\ell_0(\mathfrak{o}_{E_0}) = \mathfrak{o}_{F_0},\ \ell_0(\mathfrak{p}_{E_0}^{-1}) = \mathfrak{p}_{F_0}^{-1}
\end{equation*}
as in \cite[2.3]{Bl}. We define an $F$-linear form $\ell: E \to F$ as follows: If $F = F_0$, put
\begin{equation*}
\ell = \ell_0 \circ \mathrm{tr}_{E/E_0}.
\end{equation*}
Otherwise, we extend $\ell_0$ to $E$ linearly. In fact, since $F/F_0$ is unramified and the residual characteristic $p$ of $F$ is not $2$, there is an element $\xi \in \mathfrak{o}_F^\times$ such that $F = F_0[\xi],\ E = E_0[\xi]$, and $\xi^2 \in F_0$. 
We note that $E/E_0$ is also unramified. Thus we have $\mathfrak{o}_F = \mathfrak{o}_{F_0} + \mathfrak{o}_{F_0}\xi,\ \mathfrak{o}_E = \mathfrak{o}_{E_0} + \mathfrak{o}_{E_0}\xi$. Hence $\ell: E \to F$ is given by
\begin{equation}
\ell(x + y\xi) = \ell_0(x) + \ell_0(y)\xi
\end{equation}
for all $x, y \in E_0$. Hereafter we fix this $F$-linear form $\ell: E \to F$.

From the $F$-linear form $\ell$ on $E = F[\beta]$ and the form $h$ on $V$, we can define an $E$-anti-hermitian form $\widetilde{h}_\beta$ on $V$ by
\begin{equation}
h(av,w) = \ell(a\widetilde{h}_\beta(v,w))
\end{equation}
for all $v, w \in V$ and all $a \in E$ (cf. \cite{SS}). Then $\widetilde{h}_\beta$ is non-degenerate. Let $B = B_\beta$ be the $A$-centralizer of $\beta$. Then we may identify $B$ with $\mathrm{End}_E(V)$.

By definition, we have
\begin{equation}
\ell_0^{-1}(\mathfrak{o}_{F_0}) = \mathfrak{o}_{E_0}.
\end{equation}

\begin{prop}             
The form $\widetilde{h}_\beta$ is a non-degenerate $E/E_0$-anti-hermitian form on $V$, and there is a canonical isomorphism
\begin{equation*}
B^\times \cap\ G = \{x \in B^\times \mid \gamma(x) = x\} \simeq U(V,\widetilde{h}_\beta).
\end{equation*}
\end{prop}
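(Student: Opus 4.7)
My plan rests on two simple facts about the linear form $\ell : E \to F$. First, $\ell$ intertwines the two involutions, in the sense that $\ell(\bar a) = \overline{\ell(a)}$ for all $a \in E$. When $F = F_0$ both involutions are trivial; when $F \ne F_0$ this is immediate from the defining formula $\ell(x + y\xi) = \ell_0(x) + \ell_0(y)\xi$ together with $\bar\xi = -\xi$ and the fact that $\ell_0$ is $F_0$-valued. Second, the $F$-bilinear pairing $(a, b) \mapsto \ell(ab)$ on $E \times E$ is non-degenerate: for $b \ne 0$ we have $bE = E$ since $E$ is a field, so $\{\ell(ab) : a \in E\} = \mathrm{Im}(\ell) \ne \{0\}$.

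These two facts handle nearly everything. Non-degeneracy of the $\ell$-pairing already tells us that $\widetilde{h}_\beta(v, w)$ is uniquely determined by $h(av, w) = \ell(a\,\widetilde{h}_\beta(v, w))$, so $\widetilde{h}_\beta$ is a well-defined map $V \times V \to E$. For $E$-linearity in the first variable, apply the defining relation to $bv$ in place of $v$ and compare with $\ell(ab\,\widetilde{h}_\beta(v, w))$; for semi-linearity in the second variable, use the adjoint identity $h(u, aw) = h(\bar a u, w)$, which holds because $^-$ is the adjoint involution for $h$ and fixes $E$.

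For the anti-hermitian symmetry, start from $h(av, w) = -\overline{h(w, av)}$ and expand the right-hand side as $-\overline{\ell(\bar a\,\widetilde{h}_\beta(w, v))} = -\ell(a\,\overline{\widetilde{h}_\beta(w, v)})$ using the first fact; non-degeneracy of the $\ell$-pairing then forces $\widetilde{h}_\beta(v, w) = -\overline{\widetilde{h}_\beta(w, v)}$. Non-degeneracy of $\widetilde{h}_\beta$ itself is immediate: setting $a = 1$ in the defining relation shows that $\widetilde{h}_\beta(v, \cdot) = 0$ implies $h(v, \cdot) = 0$, whence $v = 0$.

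For the identification with $U(V, \widetilde{h}_\beta)$, note that every $x \in B^\times$ commutes with $E$, so $h(a(xv), xw) = h(x(av), xw) = \ell(a\,\widetilde{h}_\beta(xv, xw))$. Thus $x$ preserves $h$ if and only if $\ell(a(\widetilde{h}_\beta(xv, xw) - \widetilde{h}_\beta(v, w))) = 0$ for all $a \in E$, which by non-degeneracy of the $\ell$-pairing is equivalent to $x \in U(V, \widetilde{h}_\beta)$. The main delicacy is the compatibility of $\ell$ with the two involutions, which is the key point in the anti-hermitian check and relies on the unramified hypothesis on $F/F_0$ together with $p \ne 2$ to produce the element $\xi$; once this is in hand, the rest of the proposition is routine manipulation with the non-degenerate $\ell$-pairing.
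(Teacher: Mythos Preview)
Your approach is essentially the paper's own: the key observation is that $\ell(\bar z) = \overline{\ell(z)}$, and everything else follows from the non-degeneracy of the pairing $(a,b) \mapsto \ell(ab)$ on $E$. The paper's proof is terse (it cites \cite[2.3]{Bl} for $F = F_0$ and records only the intertwining identity for $F \ne F_0$), while you have spelled out the consequences in full; the substance is the same.

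One small slip: in the case $F = F_0$ you say ``both involutions are trivial'', but the involution on $E$ is \emph{not} trivial here. Since $\bar\beta = -\beta$ and $p \ne 2$, the involution acts nontrivially on $E = F[\beta]$ whenever $E \ne F$, so $[E:E_0] = 2$. The intertwining identity $\ell(\bar a) = \overline{\ell(a)} = \ell(a)$ still holds, but for a different reason: with $\ell = \ell_0 \circ \mathrm{tr}_{E/E_0}$, it follows because $\mathrm{tr}_{E/E_0}$ is invariant under the Galois involution of $E/E_0$. Once you replace your one-line justification with this, the argument is complete.
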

\begin{proof}
In the case of $F = F_0$, this follows easily (cf. \cite[2.3]{Bl}). Suppose that $F \ne F_0$. By the assumption, $E/E_0$ is unramified, as was noted above. It follows from the definition of the $F$-linear form $\ell$ in 1.3 that $\ell(\overline{z}) = \overline{\ell(z)}$ for $z \in E$, whence this shows that $\overline{h}_\beta$, defined by (1.3.2), is a non-degenerate $E$-anti-hermitian form. The proof is completed.
\end{proof}

\begin{lem}          
We have
\begin{equation*}
\ell^{-1}(\mathfrak{o}_F) = \mathfrak{p}_E^{1-e(E \mid E_0)},
\end{equation*}
where $e(E \mid E_0)$ denotes the ramification index of $E/E_0$.
\end{lem}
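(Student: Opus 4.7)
The plan is to split the argument into the two cases $F = F_0$ and $F \ne F_0$ that were used to define $\ell$ in 1.3.

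In the case $F = F_0$, I would use $\ell = \ell_0 \circ \mathrm{tr}_{E/E_0}$ and reduce the question to the inverse different of $E/E_0$. Explicitly, for $z \in E$, the condition $\ell(z) \in \mathfrak{o}_F$ is equivalent, by (1.3.3), to $\mathrm{tr}_{E/E_0}(z) \in \mathfrak{o}_{E_0}$, so that
\begin{equation*}
\ell^{-1}(\mathfrak{o}_F) \;=\; \mathfrak{D}_{E/E_0}^{-1}.
\end{equation*}
Now the involution on $E$ sends $\beta \mapsto -\beta$ and fixes $F_0 = F$, so $E/E_0$ has degree $1$ or $2$, and in particular $e(E\mid E_0) \in \{1,2\}$. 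Since the residual characteristic is not $2$, the extension $E/E_0$ is tame, hence $\mathfrak{D}_{E/E_0} = \mathfrak{p}_E^{e(E\mid E_0)-1}$, which gives the desired equality.

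In the case $F \ne F_0$, the paragraph preceding the lemma already observes that $E/E_0$ is unramified, so $e(E\mid E_0) = 1$ and the claim reduces to $\ell^{-1}(\mathfrak{o}_F) = \mathfrak{o}_E$. I would prove this directly from the explicit formula (1.3.1) for $\ell$, combined with the decompositions $\mathfrak{o}_F = \mathfrak{o}_{F_0} + \mathfrak{o}_{F_0}\xi$ and $\mathfrak{o}_E = \mathfrak{o}_{E_0} + \mathfrak{o}_{E_0}\xi$ recalled in 1.3. Writing any $z \in E$ uniquely as $z = x + y\xi$ with $x, y \in E_0$, one has $\ell(z) = \ell_0(x) + \ell_0(y)\xi$, and since $1$ and $\xi$ form an $\mathfrak{o}_{F_0}$-basis of $\mathfrak{o}_F$, the condition $\ell(z) \in \mathfrak{o}_F$ decouples into $\ell_0(x), \ell_0(y) \in \mathfrak{o}_{F_0}$. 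By (1.3.3), this is equivalent to $x, y \in \mathfrak{o}_{E_0}$, i.e.\ $z \in \mathfrak{o}_E$.

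The only non-bookkeeping step is the identification $\mathfrak{D}_{E/E_0} = \mathfrak{p}_E^{e-1}$ in the first case, and this is the part I expect to have to justify most carefully; however, with $p \ne 2$ forcing the tame ramification of a degree $\le 2$ extension, it is standard and can be read off from a uniformizer's Eisenstein polynomial when $e = 2$ and is trivial when $e = 1$.
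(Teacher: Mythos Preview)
Your proof is correct and follows essentially the same path as the paper's. The only organizational difference is that you split first on whether $F = F_0$ or $F \ne F_0$ (following the two branches of the definition of $\ell$), whereas the paper splits first on whether $e(E\mid E_0) = 1$ or $2$; the ingredients---equation (1.3.3), the tame-different formula $\mathfrak{D}_{E/E_0} = \mathfrak{p}_E^{e-1}$ (which the paper records as (1.3.4), citing Weil), and the explicit $\xi$-decomposition for the case $F \ne F_0$---are identical in both arguments.
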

\begin{proof}
We again note that if $G$ is an unramified unitary group over $F_0$ (with $F \ne F_0$), $E/E_0$ must be unramified.

Write $e_0 = e(E \mid E_0)$. Since $p \ne 2$, $E/E_0$ is tamely ramified. Thus, by \cite[VIII, Section 1, Proposition 4]{We}, we have
\begin{equation}
\mathrm{tr}_{E/E_0}^{-1}(\mathfrak{o}_{E_0}) = \mathfrak{p}_E^{1-e_0}.
\end{equation}
Suppose first that $e_0 = 1$, i.e., $E/E_0$ is unramified. If $F = F_0$, the assertion follows directly from (1.3.3) and (1.3.4). Suppose that $F \ne F_0$. Then $\mathfrak{o}_E \subset \ell^{-1}(\mathfrak{o}_F)$ follows immediately. Conversely, let $z = x + y\xi \in \ell^{-1}(\mathfrak{o}_F)$, for $x, y \in E_0$. Then from (1.3.1), $\ell(z) = \ell_0(x) + \ell_0(y)\xi \in \mathfrak{o}_F$, and so $\ell_0(x), \ell_0(y) \in \mathfrak{o}_{F_0}$. Hence from (1.3.3) $x, y \in \mathfrak{o}_{E_0}$, that is, $z = x + y\xi \in \mathfrak{o}_E$.

Suppose that $e_0 = 2$, i.e., $E/E_0$ is ramified. Then we must have $F = F_0$. For, since $F/F_0$ is assumed to be unramified, it follows from (1.3.4) that $\mathrm{tr}_{E/E_0}^{-1}(\mathfrak{o}_{E_0}) = \mathfrak{p}_E^{-1}$. Thus from (1.3.3),
\begin{equation*}
\mathrm{tr}_{E/E_0}^{-1}(\ell_0^{-1}(\mathfrak{o}_F)) = \mathrm{tr}_{E/E_0}^{-1}(\mathfrak{o}_{E_0}) = \mathfrak{p}_E^{-1} = \mathfrak{p}_E^{1-e_0}.
\end{equation*}
The proof is completed.
\end{proof}

\subsection{}                     

Suppose that $\beta$ is an element in the Lie algebra $\mathfrak{G}$ such that the algebra $E = F[\beta]$ is a subfield of $A$, as in 1.3. Let $L$ be an $\mathfrak{o}_E$-lattice in $V$. Then $L$ is also an $\mathfrak{o}_F$-lattice in $V$. We define the $\mathfrak{o}_E$-dual $L^\natural$ of $L$, with respect to $\widetilde{h}_\beta$, by
\begin{equation*}
L^\natural = \{v \in V \mid \widetilde{h}_\beta(v,L) \subset \mathfrak{o}_E\}.
\end{equation*}
There is a close relationship between $L^\#$ and $L^\natural$ as follows:

\begin{prop}                 
For an $\mathfrak{o}_E$-lattice $L$ in $V$, we have
\begin{equation*}
L^\# = \varpi_E^{1-e(E \mid E_0)}L^\natural,
\end{equation*}
where $\varpi_E$ is a uniformizer of $E$.
\end{prop}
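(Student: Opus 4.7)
The plan is to reduce the identity $L^\# = \varpi_E^{1-e(E\mid E_0)} L^\natural$ to the computation of the fractional ideal $\ell^{-1}(\mathfrak{o}_F)$ already carried out in Lemma 1.3.3. Write $e_0 = e(E\mid E_0)$.

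First I would take $a=1$ in (1.3.2) to obtain $h(v,w) = \ell(\widetilde{h}_\beta(v,w))$ for all $v,w \in V$. Consequently, for $v \in V$,
\begin{equation*}
v \in L^\#\ \Longleftrightarrow\ \ell(\widetilde{h}_\beta(v,w)) \in \mathfrak{o}_F\ \text{for every}\ w \in L\ \Longleftrightarrow\ \widetilde{h}_\beta(v,L) \subset \ell^{-1}(\mathfrak{o}_F).
\end{equation*}
Lemma 1.3.3 identifies $\ell^{-1}(\mathfrak{o}_F)$ with $\mathfrak{p}_E^{1-e_0} = \varpi_E^{1-e_0}\mathfrak{o}_E$, so the last condition reads $\widetilde{h}_\beta(v,L) \subset \varpi_E^{1-e_0}\mathfrak{o}_E$.

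Next I would invoke the $E$-linearity of $\widetilde{h}_\beta$ in its first argument, which is built into the defining identity (1.3.2) (since $\ell$ is a non-degenerate $F$-linear form on $E$, the function $\widetilde{h}_\beta(v,w)$ is uniquely characterised by $a \mapsto h(av,w)$, and replacing $v$ by $bv$ yields $\widetilde{h}_\beta(bv,w) = b\widetilde{h}_\beta(v,w)$). Applying this linearity, the preceding containment is equivalent to $\widetilde{h}_\beta(\varpi_E^{e_0-1}v, L) \subset \mathfrak{o}_E$, that is, $\varpi_E^{e_0-1}v \in L^\natural$, or equivalently $v \in \varpi_E^{1-e_0}L^\natural$. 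Since this equivalence is valid for every $v \in V$, the claimed equality of lattices follows.

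No serious obstacle is anticipated: once Lemma 1.3.3 is available the proof is a purely formal manipulation of (1.3.2). The only point requiring brief care is the $E$-linearity in the first slot of $\widetilde{h}_\beta$, which is implicit in (1.3.2) but worth recording en route.
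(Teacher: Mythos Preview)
Your proof is correct and follows essentially the same route as the paper's: both start from the identity $h(v,w)=\ell(\widetilde h_\beta(v,w))$ obtained from (1.3.2), invoke the computation $\ell^{-1}(\mathfrak{o}_F)=\mathfrak{p}_E^{1-e_0}$, and then pull the factor $\varpi_E^{e_0-1}$ through the first slot of $\widetilde h_\beta$ to land in $L^\natural$. The only minor point is a numbering slip: the result you cite as ``Lemma 1.3.3'' is Lemma~1.3.2 in the paper.
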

\begin{proof}
From (1.3.2), we have an equivalence: $v \in L^\# \Leftrightarrow \mathfrak{o}_F \supset h(v,L) = \ell(\widetilde{h}_\beta(v,L))$. From Lemma 1.3.2, the latter is equivalent to
\begin{eqnarray*}
\mathfrak{p}_E^{1-e_0} \supset \widetilde{h}_\beta(v,L) &\Longleftrightarrow& \mathfrak{o}_E \supset \widetilde{h}_\beta(\varpi_E^{e_0-1}v,L)\\
                                                        &\Longleftrightarrow& v \in \varpi_E^{1-e_0}L^\natural,
\end{eqnarray*}
where $e_0 = e(E \mid E_0)$. Hence the proof is completed.
\end{proof}

Let $\mathcal{L}$ be an $\mathfrak{o}_F$-lattice chain in $V$ such that $E^\times \subset \mathfrak{K}(\mathfrak{A})$, with $\mathfrak{A} = \mathfrak{A}(\mathcal{L})$. Then it follows from \cite[(1.2.1)]{BK1} that $\mathcal{L}$ is also an $\mathfrak{o}_E$-lattice chain in $V$, which is denoted by $\mathcal{L}_{\mathfrak{o}_E}$. Thus, as in 1.2, $\mathcal{L}$ has a unique self-dual slice of the form:
\begin{equation}
L^\natural_{r-1} \supsetneq \cdots \supsetneq L^\natural_0 \supset L_0 \supsetneq \cdots \supsetneq L_{r-1} \supset \varpi_EL^\natural_{r-1}
\end{equation}
for some integer $r \ge 1$, with respect to the form $\widetilde{h}_\beta$.

\begin{prop}                 
Let $\mathcal{L}$ be a self-dual $\mathfrak{o}_E$-lattice chain in $V$ with respect to $\widetilde{h}_\beta$. Then it is also a self-dual $\mathfrak{o}_F$-lattice chain in $V$ with respect to $h$. Moreover
\begin{enumerate}
  \item Suppose that $E/E_0$ is unramified. If the self-dual slice of $\mathcal{L}$ of the form (1.4.1) satisfies $L^\natural_0 = L_0$, then $L^\#_0 = L_0$ as an $\mathfrak{o}_F$-lattice.\\
  \item Suppose that $E/E_0$ is ramified. If the self-dual slice of $\mathcal{L}$ satisfies $\varpi_EL^\natural_{r-1} = L_{r-1}$, then it contains an $\mathfrak{o}_E$-lattice $M$ in $V$ such that $M^\# = M$ as an $\mathfrak{o}_F$-lattice.
\end{enumerate}
\end{prop}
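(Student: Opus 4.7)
The plan is to reduce everything to Proposition 1.4.1, which ties $\mathfrak{o}_F$-duality to $\mathfrak{o}_E$-duality via the identity $L^\# = \varpi_E^{1-e_0}L^\natural$, where $e_0 = e(E\mid E_0) \in \{1,2\}$.

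First I would dispatch the self-duality claim: given any $L \in \mathcal{L}$, self-duality with respect to $\widetilde{h}_\beta$ gives $L^\natural \in \mathcal{L}$. Since $\mathcal{L}$ is stable under multiplication by $E^\times$, the lattice $\varpi_E^{1-e_0}L^\natural$ also lies in $\mathcal{L}$, and by Proposition 1.4.1 this is exactly $L^\#$. Hence $\mathcal{L}$ is self-dual as an $\mathfrak{o}_F$-lattice chain with respect to $h$.

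For (1), the assumption $E/E_0$ unramified means $e_0 = 1$, so Proposition 1.4.1 collapses to $L^\# = L^\natural$ for every $\mathfrak{o}_E$-lattice $L$. The hypothesis $L_0^\natural = L_0$ therefore translates directly into $L_0^\# = L_0$, and there is nothing more to check.

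For (2), we have $e_0 = 2$, so $L^\# = \varpi_E^{-1}L^\natural$. I would propose the candidate $M = L_{r-1}^\natural$, which sits at the top of the self-dual slice (1.4.1). The key input is reflexivity of the $\widetilde{h}_\beta$-dual on $\mathfrak{o}_E$-lattices, $(L^\natural)^\natural = L$, which follows from non-degeneracy of $\widetilde{h}_\beta$ (Proposition 1.3.1). Applying it with $L = L_{r-1}$ and combining with the hypothesis $L_{r-1} = \varpi_E L_{r-1}^\natural$ gives
\begin{equation*}
M^\# \;=\; \varpi_E^{-1}(L_{r-1}^\natural)^\natural \;=\; \varpi_E^{-1}L_{r-1} \;=\; L_{r-1}^\natural \;=\; M,
\end{equation*}
as required. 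The whole proposition is essentially bookkeeping around Proposition 1.4.1, and there is no real obstacle; the only subtlety is choosing the correct lattice at the extremal end of the slice (the top end $L_{r-1}^\natural$ rather than the bottom $L_0$) so that the $\varpi_E^{-1}$-shift in the $\mathfrak{o}_F$-dual is absorbed by the hypothesis on that end of the slice.
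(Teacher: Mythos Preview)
Your proof is correct and follows essentially the same route as the paper: everything is reduced to Proposition~1.4.1, and in part~(2) your candidate $M = L_{r-1}^\natural$ coincides with the paper's choice $M = \varpi_E^{-1}L_{r-1}$ by the very hypothesis $\varpi_E L_{r-1}^\natural = L_{r-1}$. The only cosmetic difference is that you verify $M^\# = M$ via reflexivity $(L^\natural)^\natural = L$, whereas the paper routes the same computation through the $\mathfrak{o}_E$-period of the chain.
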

\begin{proof}
The first assertion and (1) follows immediately from Proposition 1.4.1. We show (2). Write $e = e(\mathcal{L}_{\mathfrak{o}_E})$ for the $\mathfrak{o}_E$-period of $\mathcal{L}$. From Lemma 1.3.2, it follows that $M = \varpi_E^{-1}L_{r-1}$ is the desired lattice. For, we have
\begin{eqnarray*}
(\varpi_E^{-1}L_{r-1})^\# &=& (L_{-e+r-1})^\# = \varpi_E^{-1}L_{-e+r-1}^\natural\\
                          &=& (\varpi_EL_{-e+r-1})^\natural = L_{r-1}^\natural = \varpi_E^{-1}L_{r-1}.
\end{eqnarray*}
The proof is completed.
\end{proof}

\section{Skew simple strata}              

\subsection{}                      

We recall the definition of a skew simple stratum in \cite{BK1} and \cite{St2}, and define a good skew simple stratum in $A$.

A stratum in $A$ is a 4-tuple $[\mathfrak{A},n,r,b]$, which consists of a hereditary $\mathfrak{o}_F$-order $\mathfrak{A}$ in $A$, integers $n > r$, and an element $b \in A$ such that $\nu_{\mathfrak{A}}(b) \ge -n$.

\begin{defn} [(1.7) of \cite{St2}]         
A stratum $[\mathfrak{A},n,r,b]$ in $A$ is called {\it skew}, if the lattice chain $\mathcal{L}$, with $\mathfrak{A} = \mathfrak{A}(\mathcal{L})$, is self-dual and $b \in \mathfrak{G} \simeq \mathrm{Lie}(G)$.
\end{defn}

\begin{defn}[(1.5.5) of \cite{BK1}]          
A stratum $[\mathfrak{A},n,r,\beta]$ in $A$ is {\it pure}, if it satisfies
\begin{enumerate}
  \item the algebra $E = F[\beta]$ is a field;\
  \item $E^\times \subset \mathfrak{K}(\mathfrak{A})$;\
  \item $\nu_{\mathfrak{A}}(\beta) = -n$.
\end{enumerate}
\end{defn}

For a pure stratum $[\mathfrak{A},n,r,\beta]$ in $A$, the integer $k_0(\beta,\mathfrak{A})$ of \cite[(1.4.5)]{BK1} is defined.

\begin{defn}[(1.5.5) of \cite{BK1}]         
A pure stratum $[\mathfrak{A},n,r,\beta]$ in $A$ is {\it simple}, if it satisfies $r < -k_0(\beta,\mathfrak{A})$.
\end{defn}

Let $[\mathfrak{A},n,r,\beta]$ be a pure stratum in $A$. Then the rings $\mathfrak{H}(\beta,\mathfrak{A}),\ \mathfrak{J}(\beta,\mathfrak{A})$ of \cite[(3.1)]{BK1} are defined. We define
\begin{center}
$H(\beta,\mathfrak{A}) = \mathfrak{H}(\beta,\mathfrak{A})^\times$\\
\vspace{1mm}
$J(\beta,\mathfrak{A}) = \mathfrak{J}(\beta,\mathfrak{A})^\times$
\end{center}
subgroups of $G$, and for an integer $m \ge 1$, 
\begin{center}
$H^m(\beta,\mathfrak{A}) = \mathfrak{H}(\beta,\mathfrak{A}) \cap (1 + \mathfrak{P}^m),$\\
\vspace{1mm}
$J^m(\beta,\mathfrak{A}) = \mathfrak{J}(\beta,\mathfrak{A}) \cap (1 + \mathfrak{P}^m)$
\end{center}
normal subgroups of $H(\beta,\mathfrak{A})$ and $J(\beta,\mathfrak{A})$, respectively.
A simple character set $\mathcal{C}(\mathfrak{A},m,\beta)$, for an integer $m \ge 0$, of \cite[(3.2)]{BK1} is defined. An element of $\mathcal{C}(\mathfrak{A},m,\beta)$ is a certain abelian character of the group $H^{m+1}(\beta,\mathfrak{A})$.

Let $[\mathfrak{A},n,0,\beta]$ be a skew simple stratum in $A$, with $r = -k_0(\beta,\mathfrak{A})$. Then $\mathfrak{H}(\beta,\mathfrak{A})$ and $\mathfrak{J}(\beta,\mathfrak{A})$ are fixed by $\Gamma$.
 For $0 \le m \le r-1$, the subset $\mathcal{C}^\Gamma(\mathfrak{A},m,\beta)$ of $\mathcal{C}(\mathfrak{A},m,\beta)$ is defined in \cite[3.2]{St1} by
\begin{equation*}
\mathcal{C}^\Gamma(\mathfrak{A},m,\beta) = \{\theta \in \mathcal{C}(\mathfrak{A},m,\beta) \mid \theta^\gamma = \theta\},
\end{equation*}
where $\theta^\gamma(x) = \theta(\gamma(x))$, for $x \in H^{m+1}(\beta,\mathfrak{A})$.

We define two families of compact open subgroups of $G$ as follows:
\begin{center}
$H^m_-(\beta,\mathfrak{A}) = H^m(\beta,\mathfrak{A})^\Gamma = H^m(\beta,\mathfrak{A}) \cap G,$\\
\vspace{1mm}
$J^m_-(\beta,\mathfrak{A}) = J^m(\beta,\mathfrak{A})^\Gamma = J^m(\beta,\mathfrak{A}) \cap G,$
\end{center}
for integers $m \ge 0$. From \cite[(2.1)]{St1}, there is a correspondence $\Vec{\rm{g}}$, which is called {\it Glauberman's correspondence}, between the set of equivalence classes of irreducible representations of $H^{m+1}(\beta,\mathfrak{A})$ fixed by $\Gamma$ and the set of equivalence classes of irreducible representations of $H^{m+1}_-(\beta,\mathfrak{A})$. 
In particular, for $\theta \in \mathcal{C}^\Gamma(\mathfrak{A},m,\beta)$, we have $\Vec{\rm{g}}(\theta) = \theta \vert {H^{m+1}_-(\beta,\mathfrak{A})}$. We put 
\begin{equation*}
\mathcal{C}_-(\mathfrak{A},m,\beta) = \{ \Vec{\rm{g}}(\theta) \mid \theta \in \mathcal{C}^\Gamma(\mathfrak{A},m,\beta)\}.
\end{equation*}

An element of $\mathcal{C}_-(\mathfrak{A},m,\beta)$ is called a {\it skew simple character}.

\subsection{}                     

Suppose that $[\mathfrak{A},n,0,\beta]$ is a skew simple stratum in $A$, with $\mathfrak{A} = \mathfrak{A}(\mathcal{L})$. Let $E = F[\beta]$ and $B = B_\beta$ the $A$-centralizer of $\beta$. 
Let $E_0$ be the fixed field of $E$ under the involution $-$ on $A$.
From Proposition 1.4.1, $\mathcal{L}$ is a self-dual $\mathfrak{o}_E$-lattice chain in $V$ with respect to the form $\widetilde{h}_\beta$. Thus $\mathcal{L}_{\mathfrak{o}_E}$ has a self-dual slice of the form (1.4.1).

\begin{defn}          
A skew simple stratum $[\mathfrak{A},n,0,\beta]$ in $A$, with $\mathfrak{A} = \mathfrak{A}(\mathcal{L})$, is called {\it good}, if it satisfies
\begin{enumerate}
  \item $E/E_0$ is unramified;
  \item $R = \dim_E(V)$ is even;
  \item The self-dual slice of $\mathcal{L}_{\mathfrak{o}_E}$ of the form (1.4.1) contains the $L_0$ satisfying $L^\natural_0 = L_0$.
\end{enumerate}
\end{defn}

\begin{prop}         
If the conditions (1), (2) and (3) in Definition 2.2.1 are satisfied, the anisotropic part of $(V,\widetilde{h}_\beta)$ is zero.
\end{prop}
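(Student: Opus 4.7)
The plan is to show $(V,\widetilde{h}_\beta)$ is an orthogonal sum of hyperbolic planes by reducing modulo $\varpi_E$ and lifting. Since $E/E_0$ is unramified (condition (1)) and $p\neq 2$, the element $\xi\in\mathfrak{o}_E^\times$ of (1.3.1) satisfies $\bar{\xi}=-\xi$, so $\xi^{-1}\widetilde{h}_\beta$ is a non-degenerate hermitian form on $V$ having the same isotropic vectors and the same self-dual lattices as $\widetilde{h}_\beta$. Hence I may assume $\widetilde{h}_\beta$ is hermitian.

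Condition (3) gives an $\mathfrak{o}_E$-lattice $L_0$ in the self-dual slice (1.4.1) with $L_0=L_0^\natural$, so $\widetilde{h}_\beta$ restricts to a perfect $\mathfrak{o}_E$-pairing on $L_0$. Its reduction $\bar{h}$ modulo $\varpi_E$ is then a non-degenerate hermitian form of dimension $R$ on $L_0/\varpi_E L_0$ over the quadratic extension $k_E/k_{E_0}$. Since the residual characteristic is odd, I diagonalize $\bar{h}$ and inductively lift an orthogonal basis to $L_0$: at each step a vector of unit self-product (available by unimodularity) splits off an $\mathfrak{o}_E$-line whose orthogonal complement in $L_0$ is again a unimodular hermitian lattice. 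This yields an orthogonal $\mathfrak{o}_E$-basis $\{e_1,\ldots,e_R\}$ of $L_0$ with $\widetilde{h}_\beta(e_i,e_i)=a_i\in\mathfrak{o}_{E_0}^\times$.

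By condition (2), $R$ is even, so I pair the $e_i$'s. For units $a,b\in\mathfrak{o}_{E_0}^\times$, the binary form $\langle a,b\rangle$ on $E^2$ contains an isotropic vector iff $-ab^{-1}$ lies in $N_{E/E_0}(E^\times)$. Unramifiedness (condition (1)) combined with the surjectivity of the residual norm $N_{k_E/k_{E_0}}\colon k_E^\times\to k_{E_0}^\times$ (a quadratic extension of finite fields) gives $N_{E/E_0}(\mathfrak{o}_E^\times)=\mathfrak{o}_{E_0}^\times$, which contains $-ab^{-1}$. So each $\langle a_{2i-1},a_{2i}\rangle$ is a hyperbolic plane over $E$, and $V=L_0\otimes_{\mathfrak{o}_E}E$ decomposes as an orthogonal sum of $R/2$ hyperbolic planes; in particular the anisotropic part of $(V,\widetilde{h}_\beta)$ is zero.

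The main obstacle is the inductive lift of the diagonalization from the residue field to the lattice $L_0$, namely the claim that for any $v\in L_0$ with $\widetilde{h}_\beta(v,v)\in\mathfrak{o}_{E_0}^\times$ the orthogonal complement $v^\perp\cap L_0$ is again a unimodular hermitian lattice of rank $R-1$. This is standard in the non-dyadic unramified case, but deserves care; once it is in hand, the pairing-up step using the norm-surjectivity argument is immediate.
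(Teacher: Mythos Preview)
Your argument is correct. The paper itself gives no proof here, simply citing \cite[2.3]{Bl}, so your self-contained treatment is actually more than what the paper provides. The route you take---diagonalize the unimodular hermitian lattice $L_0$ (possible since $p\neq 2$) and then use $N_{E/E_0}(\mathfrak{o}_E^\times)=\mathfrak{o}_{E_0}^\times$ (from condition (1)) to make each binary piece $\langle a_{2i-1},a_{2i}\rangle$ isotropic---is the standard direct argument and is essentially what underlies the classification result Blondel invokes.

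Two small remarks. First, the splitting step you flag as the ``main obstacle'' is entirely routine: if $h(v,v)=a\in\mathfrak{o}_{E_0}^\times$ then $w\mapsto w-a^{-1}h(v,w)v$ is an $\mathfrak{o}_E$-linear projection of $L_0$ onto $v^\perp\cap L_0$, giving $L_0=\mathfrak{o}_E v\oplus(v^\perp\cap L_0)$ with the second summand again unimodular; no special care is needed beyond $a$ being a unit. Second, your reference to the $\xi$ of (1.3.1) only literally applies when $F\neq F_0$; in the symplectic case $F=F_0$ that element is not defined in the paper, but condition (1) still guarantees an element $\xi\in\mathfrak{o}_E^\times$ with $\bar\xi=-\xi$ (take any generator of the unramified quadratic extension $E/E_0$ with square in $E_0$), so the reduction to the hermitian case goes through unchanged.
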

\begin{proof}
A proof is found in \cite[2.3]{Bl}.
\end{proof}

If $[\mathfrak{A},n,0,\beta]$ is a good skew simple stratum in $A$, from \cite[(5.5.2), (7.1.2)(ii)]{BK1}, we have an $E$-decomposition of $V$ subordinated to $\mathcal{L}_{\mathfrak{o}_E}$, with $e = e(\mathcal{L}_{\mathfrak{o}_E})$:
\begin{equation}
V = \displaystyle \bigoplus_{i=1}^e V^i
\end{equation}
such that
\begin{enumerate}
  \item $L_k = \coprod_{i=1}^e L_k^i$, where $L_k^i = L_k \cap V^i$, for $1 \le i \le e,\ k \in \mathbb{Z}$;\\
  \item $L_{i+me}^i = L_{i+me+1}^i = \cdots = L_{i+(m+1)e-1}^i \ne L_{i+(m+1)e}^i$, for $1 \le i \le e,\ m \in \mathbb{Z}$.
\end{enumerate}


\begin{lem}                  
Let $[\mathfrak{A},n,0,\beta]$ be a good skew simple stratum in $A$, with $\mathfrak{A} = \mathfrak{A}(\mathcal{L})$, $E = F[\beta]$ and $ e = e(\mathcal{L}_{\mathfrak{o}_E})$. For the self-dual slice of $\mathcal{L}_{\mathfrak{o}_E}$ of the form (1.4.1), there is a Witt basis for $L_0$
\begin{equation}
\mathcal{V} = \{v_1, v_2, \cdots, v_R\}
\end{equation}
such that $L_0 = \mathfrak{o}_Ev_1 \oplus \mathfrak{o}_Ev_2 \oplus \cdots \oplus \mathfrak{o}_Ev_R$, and that each pair $\{v_j, v_{R-j+1}\}$ generates a hyperbolic $E$-subspace of $V$ relative to $\widetilde{h}_\beta$. Write $L_0 = \mathfrak{o}_E<\mathcal{V}>$. 
For the $E$-decomposition (2.2.1) of $V$, each $V^i$ is spanned by 
\begin{center}
$\mathcal{V}^i = \mathcal{V} \cap V^i = \{v_{j_{i-1}+1}, v_{j_{i-1}+2}, \cdots, v_{j_i}\}$ 
\end{center}
over $E$, and $L_k = \coprod_i L_k^i,\ 0 \le k \le [e/2]$, satisfies
\begin{equation*}
L_k^i = 
\begin{cases}
\mathfrak{o}_E<\mathcal{V}^i>  & \text{for $i \le e-k$},\\
\mathfrak{p}_E<\mathcal{V}^i>  & \text{for $i \ge e-k+1$},
\end{cases}
\end{equation*}
where $j_0, j_1, \cdots, j_e$ are integers with $0 = j_0 < j_1 < \cdots < j_e = R$ and for a real number $r$, $[r]$ denotes the largest integer $\le r$. 
\end{lem}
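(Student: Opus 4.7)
The plan is to use the good conditions to reduce to a hyperbolic setting and then build the Witt basis blockwise, compatibly with the decomposition (2.2.1). By condition (3) and Proposition 1.4.1, $L_0^\natural = L_0$, and by Proposition 2.2.2, $(V, \widetilde{h}_\beta)$ is a hyperbolic $E$-space; hence $\widetilde{h}_\beta$ induces a non-degenerate anti-hermitian $k_E$-form on $\overline{L}_0 = L_0/\mathfrak{p}_E L_0$.

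I would next analyze the pairing on the graded pieces of the filtration $L_0 \supsetneq L_1 \supsetneq \cdots \supsetneq L_e = \mathfrak{p}_E L_0$. Self-duality of $\mathcal{L}$ gives $L_k^\natural = L_{-k} = \mathfrak{p}_E^{-1}L_{e-k}$ for all $k \in \mathbb{Z}$. A direct check then yields $\widetilde{h}_\beta(L_{k-1}, L_{e-k}) \subset \mathfrak{o}_E$ while $\widetilde{h}_\beta(L_k, L_{e-k})$ and $\widetilde{h}_\beta(L_{k-1}, L_{e-k+1})$ both lie in $\mathfrak{p}_E$, so the induced form on $\overline{L}_0$ pairs the $k$-th graded piece $L_{k-1}/L_k$ perfectly with the $(e+1-k)$-th piece $L_{e-k}/L_{e-k+1}$ and annihilates all other combinations.

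I would then choose the decomposition (2.2.1), indexed so that the $k$-th graded piece is $L_0^{e-k+1}/\mathfrak{p}_E L_0^{e-k+1}$, in such a way that the splitting is $\widetilde{h}_\beta$-orthogonal except between $V^i$ and $V^{e+1-i}$; this is a standard successive Nakayama-style lift of the residue-level antidiagonal pattern. For each pair $\{i, e+1-i\}$ with $i < (e+1)/2$, pick an $\mathfrak{o}_E$-basis of $L_0^i$ and take its $\widetilde{h}_\beta$-dual basis in $L_0^{e+1-i}$. If $e$ is odd, the central block $V^{(e+1)/2}$ is self-paired; by condition (2) its $E$-dimension is even, and because the total anisotropic part vanishes so does its own, making it hyperbolic and giving a Witt basis of its own. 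Listing the basis elements of $V^1, V^2, \ldots, V^e$ consecutively, with the dual basis of each paired block reversed, produces $\mathcal{V} = \{v_1, \ldots, v_R\}$ with $\{v_j, v_{R-j+1}\}$ the hyperbolic pairs.

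The formula for $L_k^i$ then follows by reading off the block structure: for $0 \le k \le [e/2]$, the chosen indexing has blocks shrinking one at a time from index $e$ downward, giving $L_k^i = \mathfrak{o}_E\langle\mathcal{V}^i\rangle$ for $i \le e-k$ and $\mathfrak{p}_E\langle\mathcal{V}^i\rangle$ for $i \ge e-k+1$. The main obstacle is the orthogonalization step: simultaneously arranging the splitting $L_0 = \oplus L_0^i$ to be subordinate to $\mathcal{L}_{\mathfrak{o}_E}$ as in (2.2.1) and $\widetilde{h}_\beta$-block-antidiagonal; once this is in place the basis construction and lattice identification are routine.
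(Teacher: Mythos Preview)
Your argument is correct and essentially reconstructs, from first principles, the content of Morris \cite[Proposition 1.7]{Mo2}, which is all the paper invokes: its entire proof reads ``This follows directly from Proposition 1.3.1 and \cite[Proposition 1.7]{Mo2}.'' Proposition 1.3.1 identifies $(V,\widetilde{h}_\beta)$ as an $E/E_0$-anti-hermitian space, and Morris's result then supplies exactly the Witt basis adapted to a self-dual lattice chain that you build by hand.

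Your route---analyzing the residue pairing on the graded pieces $L_{k-1}/L_k$, lifting the antidiagonal block structure to an orthogonal splitting of $L_0$, then building hyperbolic pairs blockwise---is precisely the mechanism underlying Morris's proposition, so the two approaches differ only in that you unpack the citation. The step you flag as the obstacle (arranging the splitting (2.2.1) to be simultaneously subordinate to $\mathcal{L}_{\mathfrak{o}_E}$ and $\widetilde{h}_\beta$-block-antidiagonal) is exactly what Morris proves; your Nakayama-lift sketch is the standard way to do it. One small point worth making explicit: when $e$ is odd you assert the middle block has even $E$-dimension ``by condition (2)''; strictly, condition (2) says only that $R$ is even, and you need the perfect residue pairing to get $\dim_E V^i = \dim_E V^{e+1-i}$ for $i\neq e+1-i$ before concluding the middle block is even-dimensional. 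With that said, your argument stands.
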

\begin{proof}
This follows directly from Proposition 1.3.1 and \cite[Proposition 1.7]{Mo2}.
\end{proof}

\begin{prop}                  
Suppose that $[\mathfrak{A},n,0,\beta]$ is a good skew simple stratum in $A$, with $\mathfrak{A} = \mathfrak{A}(\mathcal{L})$. Let $E = F[\beta]$ and $B = B_\beta$ the $A$-centralizer of $\beta$, and $e = e(\mathcal{L}_{\mathfrak{o}_E})$. Put $t = [(e+1)/2]$. Then the $E$-vector space $V$ is decomposed into an orthogonal decomposition as follows:
\begin{equation*}
V = \bot_{i=1}^t V_i,\ \widetilde{h}_\beta = \bot_{i=1}^t \widetilde{h}_i
\end{equation*}
such that for $1 \le i \le [e/2]$, $(V_i,\widetilde{h}_i)$ is a hyperbolic space, where $V^i$ and $V^{e-i+1}$ are totally isotropic subspaces of $V_i$.
\end{prop}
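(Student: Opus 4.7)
The plan is to construct the $V_i$ directly from the Witt basis $\mathcal{V} = \{v_1, \dots, v_R\}$ supplied by Lemma 2.2.3. Recall that $V^i$ is the $E$-span of $\mathcal{V}^i = \{v_{j_{i-1}+1}, \dots, v_{j_i}\}$ and that, by the Witt basis property, the pairs $\{v_j, v_{R-j+1}\}$ span mutually orthogonal hyperbolic $E$-planes under $\widetilde{h}_\beta$.

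The first step will be to establish the reflection symmetry
\[
j_{e-i+1} = R - j_{i-1}, \qquad 1 \le i \le e,
\]
on the partition $0 = j_0 < j_1 < \cdots < j_e = R$. Geometrically this says that the Witt involution $v_j \leftrightarrow v_{R-j+1}$ carries the block $\mathcal{V}^i$ bijectively onto $\mathcal{V}^{e-i+1}$. I would deduce it from the self-duality of $\mathcal{L}_{\mathfrak{o}_E}$ together with the condition $L^\natural_0 = L_0$ supplied by goodness: applying Proposition 1.4.1 and the explicit formula of Lemma 2.2.3 identifies the $\mathfrak{o}_E$-lattice $L_{e-k}$ with a precise scalar multiple of the $\widetilde{h}_\beta$-dual of $L_k$, and comparing the block sizes $j_i - j_{i-1}$ with their duals yields the reflection. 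This verification is the real content of the proposition and I expect it to be the main obstacle.

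With the reflection symmetry in hand, I set
\[
V_i := V^i \oplus V^{e-i+1} \quad \text{for } 1 \le i \le [e/2],
\]
and, when $e$ is odd, $V_t := V^t$ with $t = (e+1)/2$; in each case $\widetilde{h}_i$ will be the restriction of $\widetilde{h}_\beta$ to $V_i$. Since the Witt pairing $j \leftrightarrow R-j+1$ maps $\mathcal{V}^i$ to $\mathcal{V}^{e-i+1}$, every hyperbolic plane of the Witt basis is contained in exactly one $V_i$, and the mutual orthogonality of distinct Witt pairs gives the decompositions $V = \bot_{i=1}^t V_i$ and $\widetilde{h}_\beta = \bot_{i=1}^t \widetilde{h}_i$ at once.

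Finally, for $1 \le i \le [e/2]$ one has $i \ne e-i+1$, so the Witt partner of any basis vector in $V^i$ lies in $V^{e-i+1}$ and vice versa. Thus $V^i$ and $V^{e-i+1}$ are each totally isotropic under $\widetilde{h}_i$, and $V_i = V^i \oplus V^{e-i+1}$ is a non-degenerate hyperbolic $E$-space with $V^i, V^{e-i+1}$ as complementary Lagrangian subspaces, as required. If $e$ is odd, the block $\mathcal{V}^t$ is stable under $j \mapsto R-j+1$ by the reflection symmetry, so $V_t = V^t$ is itself non-degenerate (though not claimed to be hyperbolic), completing the orthogonal decomposition.
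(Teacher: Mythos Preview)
Your argument is correct and follows the same construction as the paper: both set $V_i = V^i \oplus V^{e-i+1}$ (and $V_t = V^t$ when $e$ is odd) and then verify the orthogonality and isotropy claims. The paper's proof is terser, citing \cite[Propositions 1.7, 1.12]{Mo2} directly for the verification, whereas you unpack this by hand via the Witt basis of Lemma~2.2.3 and the self-duality of $\mathcal{L}_{\mathfrak{o}_E}$; since Lemma~2.2.3 itself rests on \cite[Proposition 1.7]{Mo2}, the two routes have the same content.
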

\begin{proof}
From (2.2.1), for $1 \le i \le [e/2]$, put 
\begin{equation*}
V_i = V^i \oplus V^{e-i+1},\ \widetilde{h}_i = \widetilde{h}_\beta \vert {V_i},
\end{equation*}
and if $t = (e+1)/2$ is an integer, put 
\begin{equation*}
V_t = V^t,\ \widetilde{h}_t = \widetilde{h}_\beta \vert {V_t}.
\end{equation*}
Then the assertion follows directly from \cite[Propositions 1.7, 1.12]{Mo2}.
\end{proof}

Let $\mathfrak{A}$, $E = F[\beta]$ be as above, and $B = B_\beta$ be the $A$-centralizer of $\beta$. Put $\mathfrak{B} = B \cap \mathfrak{A}$. We define a compact open subgroup of $G$ by
\begin{equation*}
\Vec{U}(\mathfrak{B}) = \mathfrak{A} \cap B^\times \cap G,
\end{equation*}
and a family of normal subgroups of $\Vec{U}(\mathfrak{B})$ by
\begin{equation*}
\Vec{U}^m(\mathfrak{B}) = (1 + \mathfrak{P}^m) \cap B^\times \cap G = (1 + \mathfrak{Q}^m) \cap G,
\end{equation*}
for integers $m \ge 1$, where $\mathfrak{Q} = \mathfrak{P} \cap B$.

\begin{prop}                      
Suppose that $[\mathfrak{A},n,0,\beta]$ is a good skew simple stratum in $A$, with $\mathfrak{A} = \mathfrak{A}(\mathcal{L})$. Let $E = F[\beta]$ and $B = B_\beta$ the $A$-centralizer of $\beta$, and $e = e(\mathcal{L}_{\mathfrak{o}_E})$.  
Put $t = [(e+1)/2]$. Suppose moreover that the lattice chain $\mathcal{L}_{\mathfrak{o}_E}$ has the self-dual slice of the form (1.4.1). Then there is a canonical isomorphism:
\begin{equation*}
\Vec{U}(\mathfrak{B})/\Vec{U}^1(\mathfrak{B}) \simeq
\begin{cases}
\displaystyle \prod_{i=1}^{e/2} \mathrm{Aut}_{k_E}(\overline{V}^i)   & \text{if $e$ is even},\\
\displaystyle \prod_{i=1}^{(e-1)/2} \mathrm{Aut}_{k_E}(\overline{V}^i) \times \Vec{U}(\overline{V}_t,\overline{h}_t)   & \text{if $e$ is odd},
\end{cases}
\end{equation*}
where $\overline{V}^i = L_{i-1}/L_i,$ for $1 \le i \le [e/2]$, and if $t = (e+1)/2$ is an integer, $\overline{V}_t = L_{t-1}/\varpi_EL_{t-1}^\natural$ and $\overline{h}_t$ is a non-degenerate form, induced naturally from $\widetilde{h}_\beta$. Moreover, $(\overline{V}_t, \overline{h}_t)$ is a $k_E/k_{E_0}$-anti-hermitian space whose anisotropic part is zero.
\end{prop}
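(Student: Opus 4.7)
My plan is to reduce the assertion to a standard calculation of the reductive quotient of a parahoric subgroup in $U(V, \widetilde{h}_\beta)$, carried out factor by factor against the orthogonal decomposition of $V$. First, Proposition 1.3.1 identifies $B^\times \cap G$ canonically with $U(V, \widetilde{h}_\beta)$; under this identification, $\Vec{U}(\mathfrak{B})$ becomes the stabilizer of the self-dual $\mathfrak{o}_E$-lattice chain $\mathcal{L}_{\mathfrak{o}_E}$, and $\Vec{U}^1(\mathfrak{B})$ its pro-unipotent radical. Next, Proposition 2.2.4 supplies $V = \bot_{i=1}^{t} V_i$ with $\widetilde{h}_\beta = \bot_i \widetilde{h}_i$. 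Using the Witt basis of Lemma 2.2.3 together with the formulas for $L_k^i$ immediately after (2.2.1), I would verify that the restriction of $\mathcal{L}_{\mathfrak{o}_E}$ to each $V_i$ is itself self-dual with respect to $\widetilde{h}_i$. This forces the parahoric stabilizer to split as a direct product over $i$, and the reductive quotient does likewise.

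For each $i$ with $1 \le i \le [e/2]$, the factor $V_i = V^i \oplus V^{e-i+1}$ is hyperbolic with a Lagrangian splitting, and the residue spaces $\overline{V}^i = L_{i-1}^i / L_i^i$ and $\overline{V}^{e-i+1}$ are perfectly paired under the form induced from $\widetilde{h}_\beta$. Modulo the first congruence subgroup, any isometry of $(V_i, \widetilde{h}_i)$ preserving the restricted lattice chain is determined by its induced automorphism of $\overline{V}^i$ (with the action on $\overline{V}^{e-i+1}$ necessarily the contragredient), and conversely every element of $\mathrm{Aut}_{k_E}(\overline{V}^i)$ lifts to such an isometry. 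This yields the factor $\mathrm{Aut}_{k_E}(\overline{V}^i)$.

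When $e$ is odd, the remaining middle factor arises from $V_t = V^t$. The self-duality of the slice gives $\varpi_E (L_{t-1}^t)^\natural \subset L_{t-1}^t$, and since $E/E_0$ is unramified by the good hypothesis, Lemma 1.3.2 yields $\ell^{-1}(\mathfrak{o}_F) = \mathfrak{o}_E$; hence $\widetilde{h}_\beta|_{V_t}$ descends to a non-degenerate $k_E/k_{E_0}$-anti-hermitian form $\overline{h}_t$ on $\overline{V}_t = L_{t-1}^t / \varpi_E (L_{t-1}^t)^\natural$, and the standard parahoric computation identifies the corresponding factor with $\Vec{U}(\overline{V}_t, \overline{h}_t)$. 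The anisotropic part of $(\overline{V}_t, \overline{h}_t)$ then vanishes because $\dim_{k_E} \overline{V}_t = \dim_E V_t$ is even (forced by condition (2) of the good hypothesis together with the hyperbolicity of the other $V_i$), and every non-degenerate anti-hermitian form of even dimension over a quadratic extension of finite fields is hyperbolic. The main obstacle I anticipate is the first step — verifying cleanly that the restriction of $\mathcal{L}_{\mathfrak{o}_E}$ to each $V_i$ is self-dual with respect to $\widetilde{h}_i$ — since this is what underwrites the product decomposition of the parahoric; I would handle it by tracking the pairing of Witt basis vectors $v_j$ with $v_{R-j+1}$ through the explicit formulas of Lemma 2.2.3.
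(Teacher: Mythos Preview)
Your proposal is correct and follows essentially the same route as the paper: the paper's proof simply invokes Proposition~2.2.4 together with the standard parahoric reductive-quotient computation in Morris \cite[1.10, Proposition~1.12]{Mo2}, which is exactly what you are unpacking factor by factor. Your justification of the last assertion (even dimension plus uniqueness of anti-hermitian forms over finite fields) is a slightly more elementary variant of the paper's appeal to Proposition~2.2.2 and \cite[1.10]{Mo2}, but the content is the same.
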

\begin{proof}
This follows at once from Proposition 2.2.4 and \cite[1.10 and Proposition 1.12]{Mo2}. In particular, the last assertion follows from Proposition 2.2.2 and \cite[1.10]{Mo2}.
\end{proof}

\section{Beta extensions}                 

\subsection{}              
Following the methods of \cite{BK1} and \cite{St3}, we prove the existence of a beta extension for our classical group $G$. Hereafter, we assume that the residual characteristic $p$ of $F$ is neither 2 nor 3.

If $\rho$ is a representation of a compact open subgroup $K$ of $G$, and $g \in G$, we write
\begin{equation*}
I_g(\rho) = \mathrm{Hom}_{K^g \cap K}(\rho,\rho^g),
\end{equation*}
where $K^g = g^{-1}Kg$ and $\rho^g(x) = \rho(gxg^{-1})$, for $x \in K^g \cap K$.

\begin{prop}[(5.1.1) of \cite{BK1}]          
Let $[\mathfrak{A},n,0,\beta]$ be a skew simple stratum in $A$, and $\theta_- \in \mathcal{C}_-(\mathfrak{A},0,\beta)$. 
Then there is a unique irreducible representation $\eta_- = \eta(\theta_-)$ of $J_-^1(\beta,\mathfrak{A})$ such that $\eta_- \vert H^1_-(\beta,\mathfrak{A})$ contains $\theta_-$. We have
\begin{equation*}
\dim(\eta_-) = (J_-^1(\beta,\mathfrak{A}):H_-^1(\beta,\mathfrak{A}))^{\frac{1}{2}},
\end{equation*}
and for $g \in G$,
\begin{equation*}
\dim(I_g(\eta_-)) =
\begin{cases}
1   & \text{if $g \in J_-^1(B^\times \cap G)J_-^1$,}\\
0   & \text{otherwise}.
\end{cases}
\end{equation*}
\end{prop}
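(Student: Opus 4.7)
The plan is to reduce everything to the analogous \cite[(5.1.1)]{BK1} on $\widetilde{G}$ and transfer along Glauberman's correspondence $\Vec{\rm{g}}$. By definition of $\mathcal{C}_-(\mathfrak{A},0,\beta)$, choose $\theta \in \mathcal{C}^\Gamma(\mathfrak{A},0,\beta)$ with $\Vec{\rm{g}}(\theta) = \theta_-$. Applying \cite[(5.1.1)]{BK1}, there is a unique irreducible representation $\eta$ of $J^1 = J^1(\beta,\mathfrak{A})$ whose restriction to $H^1 = H^1(\beta,\mathfrak{A})$ contains $\theta$. Since $\theta^\gamma = \theta$, the representation $\eta^\gamma$ is irreducible and contains $\theta$ upon restriction to $H^1$; uniqueness forces $\eta^\gamma \simeq \eta$, i.e., $\eta$ is $\Gamma$-stable. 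Since $p$ is odd and $J^1$ is a pro-$p$ group, Glauberman's correspondence of \cite[(2.1)]{St1} produces a well-defined irreducible representation $\eta_- := \Vec{\rm{g}}(\eta)$ of $J^1_-$.

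Compatibility of $\Vec{\rm{g}}$ with restriction to the fixed subgroup of a $\Gamma$-stable normal subgroup, applied to the pair $H^1 \triangleleft J^1$, gives $\eta_-|H^1_- \supset \Vec{\rm{g}}(\theta) = \theta_-$. Uniqueness of $\eta_-$ with this property follows from the bijectivity of $\Vec{\rm{g}}$: any irreducible $\eta'$ of $J^1_-$ containing $\theta_-$ lifts uniquely to a $\Gamma$-stable irreducible $\eta''$ of $J^1$ with $\eta''|H^1 \supset \theta$, and the uniqueness part of \cite[(5.1.1)]{BK1} forces $\eta'' \simeq \eta$, hence $\eta' \simeq \eta_-$.

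For the dimension, the commutator pairing on $J^1/H^1$ evaluated via $\theta$ makes $J^1/H^1$ a non-degenerate symplectic $k_F$-space and $\eta$ its Heisenberg representation, so $\dim(\eta) = (J^1:H^1)^{1/2}$. The involution $\gamma$ acts symplectically; combining with the Iwahori-type decompositions of $J^1$ and $H^1$ with respect to $\gamma$ (which will be established in section 4), the restriction of the pairing to $J^1_-/H^1_-$ remains non-degenerate and is given by $\theta_-$, exhibiting $\eta_-$ as a Heisenberg representation of $J^1_-/H^1_-$. This yields the required formula $\dim(\eta_-) = (J^1_-:H^1_-)^{1/2}$.

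For the intertwining, \cite[(5.1.1)]{BK1} gives $\dim I_g(\eta) \le 1$ for $g \in \widetilde{G}$, with equality iff $g \in J^1 B^\times J^1$. Since $\eta$ is $\Gamma$-stable, $I_g(\eta)$ carries a natural $\Gamma$-action for $g \in G$, and the description of Glauberman's correspondence via the Brauer-type map shows $I_g(\eta_-) \simeq I_g(\eta)^\Gamma$. Thus $\dim I_g(\eta_-) \le 1$, with equality iff $g \in (J^1 B^\times J^1) \cap G$. The main obstacle is therefore the identity
\begin{equation*}
(J^1 B^\times J^1) \cap G \;=\; J^1_- (B^\times \cap G) J^1_-,
\end{equation*}
which is the substantive point: the inclusion $\supset$ is trivial, while $\subset$ requires writing $g = j_1 b j_2 \in G$ and, using $\gamma(g) = g^{-1}$, adjusting $j_1, j_2, b$ within their double coset so that $j_1 \in J^1_-$, $b \in B^\times \cap G$, $j_2 \in J^1_-$. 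This is carried out through the Iwahori decomposition of $J^1$ deferred to section 4, and is the key technical step of the proof.
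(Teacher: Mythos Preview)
The paper's own proof is a one-line citation: ``This is a special case of \cite[(3.29) and (3.31)]{St3}.'' So there is nothing to compare at the level of argument structure---the paper simply imports the result from Stevens. Your sketch is, in broad outline, how Stevens actually proceeds: lift to $\widetilde{G}$, invoke \cite[(5.1.1)]{BK1}, and descend via Glauberman.

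That said, your intertwining argument has a real gap. The assertion $I_g(\eta_-) \simeq I_g(\eta)^\Gamma$ is not something Glauberman's correspondence gives you for free; the Brauer-type character description relates traces, not $\mathrm{Hom}$-spaces between restrictions to intersections of conjugates. What Stevens actually does in \cite{St3} is to work directly with the pairing $(j,j') \mapsto \theta_-([j,j'])$ on $J^1_-/H^1_-$, prove its non-degeneracy, and then read off both the dimension of $\eta_-$ and its intertwining from the Heisenberg picture. The intertwining of $\eta_-$ is thus controlled by the intertwining of $\theta_-$, and the latter---namely $I_G(\theta_-) = J^1_-(B^\times \cap G)J^1_-$---is the main theorem of \cite{St2}, built on the double coset machinery of \cite{St1}. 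Your deferral of the identity $(J^1 B^\times J^1) \cap G = J^1_-(B^\times \cap G)J^1_-$ to ``section~4'' is mistaken: section~4 of this paper establishes Iwahori decompositions of $J^1_-$, $H^1_-$ relative to a parabolic subgroup $P = MN_u$, which is a different (and much easier) statement than the double coset decomposition you need. The latter is genuinely the content of \cite{St1, St2} and is not reproved here.
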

\begin{proof}
This is a special case of \cite[(3.29) and (3.31)]{St3}.
\end{proof}

\begin{prop}[(5.1.2) of \cite{BK1}]             
For $i = 1, 2$, suppose that $[\mathfrak{A}_i,n_i,0,\beta]$ is a skew simple stratum in $A$, and let $\theta_-^i \in \mathcal{C}_-(\mathfrak{A}_i,0,\beta)$. 
Let $\eta_-^i$ be the unique irreducible representation of $J_-^1(\beta,\mathfrak{A}_i)$ which contains $\theta_-^i$. Then we have
\begin{equation*}
\dim(\eta_-^1)(\Vec{U}^1(\mathfrak{B}_1):\Vec{U}^1(\mathfrak{B}_2)) = \dim(\eta_-^2)(J_-^1(\beta,\mathfrak{A}_1):J_-^1(\beta,\mathfrak{A}_2)),
\end{equation*}
where $\mathfrak{B}_i$ denotes the $\mathfrak{A}$-centralizer of $\beta$, for $i = 1, 2$.
\end{prop}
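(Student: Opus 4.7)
By Proposition 3.1.1 applied to both $i=1$ and $i=2$, we have $\dim(\eta_-^i)^2 = (J_-^1(\beta,\mathfrak{A}_i):H_-^1(\beta,\mathfrak{A}_i))$. Substituting this and squaring, the asserted identity reduces to the purely group-theoretic claim
\begin{equation*}
(J_-^1(\beta,\mathfrak{A}_1):H_-^1(\beta,\mathfrak{A}_1)) \cdot (\Vec{U}^1(\mathfrak{B}_1):\Vec{U}^1(\mathfrak{B}_2))^2 = (J_-^1(\beta,\mathfrak{A}_2):H_-^1(\beta,\mathfrak{A}_2)) \cdot (J_-^1(\beta,\mathfrak{A}_1):J_-^1(\beta,\mathfrak{A}_2))^2,
\end{equation*}
where indices of non-nested pairs are read as ratios of Haar volumes on $G$. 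This is a statement about filtrations of compact open subgroups, no longer mentioning $\theta_-$ or $\eta_-$.

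The strategy is to adapt the proof of Bushnell--Kutzko \cite[(5.1.2)]{BK1} for the ambient group $\widetilde{G}=GL(N,F)$ and descend via Galois-fixed points. The $\widetilde{G}$-identity rests on the ring decompositions $\mathfrak{H}(\beta,\mathfrak{A}) = \mathfrak{B} + \mathfrak{H}(\beta,\mathfrak{A})\cap\mathfrak{P}$ and $\mathfrak{J}(\beta,\mathfrak{A}) = \mathfrak{B} + \mathfrak{J}(\beta,\mathfrak{A})\cap\mathfrak{P}$ of \cite[(3.1.9)]{BK1}: the ``off-diagonal'' successive quotients of $\mathfrak{H}^1/U^1(\mathfrak{B})$ and $\mathfrak{J}^1/U^1(\mathfrak{B})$ are $k_F$-vector spaces whose dimensions are determined by $\beta$ alone, independently of the hereditary order. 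To descend to $G$, I would intersect each of the relevant subgroups with the $\Gamma$-fixed set. On every such finite-dimensional $k_F$-quotient the involution $\gamma$ acts as $x \mapsto -\bar x$, and since $p \neq 2$ the associated $\gamma$-cohomology vanishes, so forming $\gamma$-fixed points is exact on short exact sequences. Under the goodness hypothesis---Definition 2.2.1 together with Proposition 2.2.2, which rule out any anisotropic contribution from $\widetilde{h}_\beta$---the $(+1)$- and $(-1)$-eigenspaces of $\gamma$ on each such quotient have equal dimension, so every Galois-fixed index equals the square root of the ambient index. Substituting these square-root relations into the $\widetilde{G}$-identity yields the $G$-identity.

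The main obstacle is the case in which $\mathfrak{A}_1$ and $\mathfrak{A}_2$ are not comparable by inclusion: then the indices appearing above are genuine Haar volume ratios and the equal-dimension eigenspace argument applies most cleanly along chains of inclusions. I would handle this by choosing a $\Gamma$-stable common refinement $\mathfrak{A}_3$ with $\mathfrak{A}_3\subset \mathfrak{A}_1\cap\mathfrak{A}_2$; its existence and $\Gamma$-stability rely on the goodness hypothesis together with the Iwahori decompositions of $H_-^1, J_-^1, \Vec{U}^1(\mathfrak{B})$ to be established in Section 4. One then proves the identity for the nested pairs $(\mathfrak{A}_1,\mathfrak{A}_3)$ and $(\mathfrak{A}_2,\mathfrak{A}_3)$ separately and deduces the general case by multiplicativity. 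The technical heart will be verifying that the $\gamma$-eigenspace dimension counts, which are the only place where the classical-group setting genuinely departs from \cite{BK1}, remain compatible with this reduction to nested pairs.
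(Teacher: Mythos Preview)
Your reduction to the squared index identity is correct, and the idea of descending from the $\widetilde{G}$-result is natural. However, there is a genuine gap: Proposition 3.1.2 is stated for arbitrary skew simple strata, not good ones, yet your argument explicitly invokes goodness (Definition 2.2.1, Proposition 2.2.2) to force the $(+1)$- and $(-1)$-eigenspaces of $\gamma$ on each successive quotient to have equal dimension, and you rely on the Iwahori decompositions of Section~4, which are only established for good strata. Without goodness, $(V,\widetilde{h}_\beta)$ may have nonzero anisotropic part and your equal-eigenspace claim need not hold; even under goodness, the assertion that every Galois-fixed index is exactly the square root of the ambient one would require separate justification on each quotient and does not follow from the vanishing of $H^1(\Gamma,\cdot)$ alone.

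The paper's route avoids descent from $\widetilde{G}$ entirely. The Cayley map $C(x)=(1+\tfrac12 x)(1-\tfrac12 x)^{-1}$ is $\Gamma$-equivariant (since $p\neq 2$) and restricts to bijections between the relevant $\mathfrak{o}_F$-lattices in $\mathfrak{G}=\{a\in A\mid a+\bar a=0\}$ and the subgroups $H_-^1$, $J_-^1$, $\Vec{U}^1(\mathfrak{B})$ of $G$. Index computations thus become lattice-quotient computations inside $\mathfrak{G}$ itself, where Stevens' exact sequence \cite[(3.17)]{St3} supplies the classical-group analogue of the $\mathfrak{H}/\mathfrak{J}$-filtration structure used in \cite[(5.1.2)]{BK1}. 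One then imitates the lattice manipulations of \cite[(5.1.2)]{BK1} verbatim inside $\mathfrak{G}$ (cf.\ \cite[4.2]{Bl}), with no comparison to $\widetilde{G}$ and no square-root relations required. This argument is valid for all skew simple strata, as the statement demands.
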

\begin{proof}
Using the exact sequence of \cite[(3.17)]{St3} and the Cayley map $C(x) = (1+\frac{1}{2}x)(1-\frac{1}{2}x)^{-1}$, we can prove the assertion in the same way as the proof of \cite[(5.1.2)]{BK1} (cf. \cite[4.2]{Bl}).
\end{proof}

Suppose that $[\mathfrak{A},n,0,\beta]$ is a good skew simple stratum in $A$, with $\mathfrak{A} = \mathfrak{A}(\mathcal{L})$. Let $E = F[\beta],$ and $B = B_\beta$ be the $A$-centralizer of $\beta$. Then $\mathcal{L} = \mathcal{L}_{\mathfrak{o}_E}$ is a self-dual $\mathfrak{o}_E$-lattice chain in $V$, with $e = e(\mathcal{L}_{\mathfrak{o}_E})$. 
From Definition 2.2.1, its self-dual slice of the form (1.4.1) contains the $\mathfrak{o}_E$-lattice $L_0$ in $V$ such that $L_0^\natural = L_0$. Thus we can put
\begin{equation}
\mathcal{L}_M = \{ \varpi_E^iL_0 \mid i \in \mathbb{Z}\}.
\end{equation}
This is a self-dual $\mathfrak{o}_E$-lattice chain in $V$ satisfying (1) $\mathcal{L}_M \subset \mathcal{L}$, (2) the $\mathfrak{o}_E$-period of $\mathcal{L}_M$ is equal to one. We can choose a (maximal) self-dual $\mathfrak{o}_E$-lattice chain $\mathcal{L}_m$ in $V$ satisfying (1) $\mathcal{L} \subset \mathcal{L}_m$, (2) the $\mathfrak{o}_E$-period of $\mathcal{L}_m$ is equal to $R = \dim_E(V)$. From $\mathcal{L}_M$ and $\mathcal{L}_m$, we obtain $\mathfrak{o}_E$-orders $\mathfrak{B}_M$ and $\mathfrak{B}_m$ in $B = B_\beta$ as follows:
\begin{equation*}
\mathfrak{B}_M = \mathrm{End}^0_{\mathfrak{o}_E}(\mathcal{L}_M) = \{x \in B \mid xL \subset L,\ \text{for all}\ L \in \mathcal{L}_M\}
\end{equation*}
and similarly
\begin{equation*}
\mathfrak{B}_m = \mathrm{End}^0_{\mathfrak{o}_E}(\mathcal{L}_m).
\end{equation*}
Then $\mathfrak{B}_M$ (resp. $\mathfrak{B}_m$) is a maximal (resp. minimal) herediatry $\mathfrak{o}_E$-order of $B$. Moreover $\mathfrak{B} = B \cap \mathfrak{A}$ satisfies $\mathfrak{B}_m \subset \mathfrak{B} \subset \mathfrak{B}_M$. From Proposition 1.4.1, $\mathcal{L}_M$ and $\mathcal{L}_m$ are also self-dual $\mathfrak{o}_F$-lattice chains in $V$. Write 
\begin{equation*}
\mathfrak{A}_M = \mathrm{End}^0_{\mathfrak{o}_F}(\mathcal{L}_M),\ \mathfrak{A}_m = \mathrm{End}^0_{\mathfrak{o}_F}(\mathcal{L}_m).
\end{equation*}
Then we have $\mathfrak{B}_M = \mathfrak{A}_M \cap B,\ \mathfrak{B}_m = \mathfrak{A}_m \cap B$. 

We denote by $\nu_E(\beta)$ the normalized valuation of $\beta$ in $E$. Then, since we have $\nu_{\mathfrak{A}_M}(\beta) = -\nu_E(\beta)$ and $\nu_{\mathfrak{A}_m}(\beta) = -\nu_E(\beta)R$, strata $[\mathfrak{A}_M,-\nu_E(\beta),0,\beta]$ and $[\mathfrak{A}_m,-\nu_E(\beta)R,0,\beta]$ in $A$ are both (good) skew simple. From \cite[(3.26)]{St3}, there is a transfer
\begin{equation*}
\tau_{\mathfrak{A}_m,\mathfrak{A}_M,\beta,0}:\mathcal{C}_-(\mathfrak{A}_m,0,\beta) \to \mathcal{C}_-(\mathfrak{A}_M,0,\beta),
\end{equation*}
(cf. \cite[(3.6.2)]{BK1}). Similarly, there is a transfer $\tau_{\mathfrak{A}_m,\mathfrak{A},\beta,0}$.

Let $\theta_{M,-} \in \mathcal{C}_-(\mathfrak{A}_M,0,\beta),\ \theta_{m,-} \in \mathcal{C}_-(\mathfrak{A}_m,0,\beta)$, and  $\theta_- \in \mathcal{C}_-(\mathfrak{A},0,\beta)$. Assume that these characters are related as follows:
\begin{equation*}
\theta_{M,-} = \tau_{\mathfrak{A}_m,\mathfrak{A}_M,\beta,0}(\theta_{m,-}),\ \theta_- = \tau_{\mathfrak{A}_m,\mathfrak{A},\beta,0}(\theta_{m,-}),
\end{equation*}
as in \cite[(5.1.13)]{BK1}.

For an integer $t \ge 1$, write simply $J_-^t = J_-^t(\beta,\mathfrak{A}),\ J_{m,-}^t = J_-^t(\beta,\mathfrak{A}_m),\ J_{M,-}^t = J_-^t(\beta,\mathfrak{A}_M),\ J_- = J_-(\beta,\mathfrak{A})$, and so on, with similar conventions for the group $H_-$. 
Let $\eta_-$ (resp. $\eta_{m,-},$ resp. $\eta_{M\-}$) be the unique irreducible representation in Proposition 3.1.1 which contains $\theta_-$ (resp. $\theta_{m,-}$, resp. $\theta_{M,-}$). Analogous results to Propositions (5.1.14) $-$ (5.1.19) for $GL(N,F)$ in \cite{BK1} can be proved for $G$ in a quite similar way.

\begin{prop}[(5.1.14)-(5.1.18) of \cite{BK1}]       
Let notation and assumptions be as above. 
\begin{enumerate}
  \item There is a unique irreducible representation $\widetilde{\eta}_{M,-}$ of $\Vec{U}^1(\mathfrak{B}_m)J_{M.-}^1$ such that
\begin{enumerate}
  \item $\widetilde{\eta}_{M,-} \vert {J_{M,-}^1} = \eta_{M,-}$, and
  \item the representations $\widetilde{\eta}_{M,-}$ and $\eta_{m,-}$ induce equivalent irreducible representations of $\Vec{U}^1(\mathfrak{A}_m)$.
 \end{enumerate}
 
  \item There is a unique irreducible representation $\widetilde{\eta}_-$ of $\Vec{U}^1(\mathfrak{B}_m)J_-^1$ such that
\begin{enumerate}
  \item $\widetilde{\eta}_- \vert {J_-^1} = \eta_-$, and
  \item the representations $\widetilde{\eta}_-$ and $\eta_{m,-}$ induce equivalent irreducible representations of $\Vec{U}^1(\mathfrak{A}_m)$.
 \end{enumerate}

  \item There is a unique irreducible representation $\hat{\eta}_{M,-}$ of $\Vec{U}^1(\mathfrak{B})J_{M,-}^1$ such that
\begin{enumerate}
  \item $\hat{\eta}_{M,-} \vert {J_{M,-}^1} = \eta_{M,-}$, and
  \item the representations $\hat{\eta}_{M,-}$ and $\eta_-$ induce equivalent irreducible representations of $\Vec{U}^1(\mathfrak{A})$.
 \end{enumerate}
 \end{enumerate}
\end{prop}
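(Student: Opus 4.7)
The plan is to follow the scheme of Bushnell-Kutzko \cite[(5.1.14)--(5.1.18)]{BK1} verbatim, replacing their $GL(N,F)$-inputs by Propositions 3.1.1 and 3.1.2. The three parts have the same structure; I will describe (1) in detail and then indicate the modifications for (2) and (3).

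The first step in (1) is to show that
\[
\pi_m := \mathrm{Ind}_{J_{m,-}^1}^{\Vec{U}^1(\mathfrak{A}_m)} \eta_{m,-}
\]
is irreducible. By Mackey's irreducibility criterion this reduces to showing that no $g \in \Vec{U}^1(\mathfrak{A}_m) \setminus J_{m,-}^1$ intertwines $\eta_{m,-}$, which follows because Proposition 3.1.1 localises the intertwining to $J_{m,-}^1 (B^\times \cap G) J_{m,-}^1$ while the Iwahori nature of $\mathfrak{A}_m$ gives $\Vec{U}^1(\mathfrak{A}_m) \cap (B^\times \cap G) = \Vec{U}^1(\mathfrak{B}_m) \subset J_{m,-}^1$.

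The second step constructs $\widetilde{\eta}_{M,-}$ as the unique irreducible component of $\pi_m\big|_{\Vec{U}^1(\mathfrak{B}_m) J_{M,-}^1}$ whose restriction to $J_{M,-}^1$ contains $\eta_{M,-}$. By Proposition 3.1.1 applied to $\mathfrak{A}_M$, every $b \in \Vec{U}^1(\mathfrak{B}_m) \subset B^\times \cap G$ intertwines $\eta_{M,-}$ with a $1$-dimensional intertwining space; a Mackey decomposition then forces $\pi_m\big|_{\Vec{U}^1(\mathfrak{B}_m) J_{M,-}^1}$ to be a direct sum of copies of a single irreducible $\widetilde{\eta}_{M,-}$ extending $\eta_{M,-}$. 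Property (1)(b) is automatic from the construction, and the dimension count works out because of Proposition 3.1.2 applied to $(\mathfrak{A}_m, \mathfrak{A}_M)$. For uniqueness, any other such extension would on induction to $\Vec{U}^1(\mathfrak{A}_m)$ give an irreducible representation of the same dimension, hence be equivalent to $\pi_m$; Frobenius reciprocity together with the $1$-dimensional intertwining then identifies it with $\widetilde{\eta}_{M,-}$.

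For (2), repeat the argument with $(\mathfrak{A}_M, \eta_{M,-})$ replaced by $(\mathfrak{A}, \eta_-)$, using the transfer compatibility $\theta_- = \tau_{\mathfrak{A}_m, \mathfrak{A}, \beta, 0}(\theta_{m,-})$ to match inductions. For (3), apply the same template to the pair $(\mathfrak{A}, \mathfrak{A}_M)$, noting that $\Vec{U}^1(\mathfrak{A}) \cap (B^\times \cap G) = \Vec{U}^1(\mathfrak{B}) \subset J_-^1$ yields the irreducibility of $\mathrm{Ind}_{J_-^1}^{\Vec{U}^1(\mathfrak{A})} \eta_-$. The main obstacle is the Mackey step in the construction of $\widetilde{\eta}_{M,-}$: one must verify that $\Vec{U}^1(\mathfrak{B}_m) J_{M,-}^1$ really is a subgroup with $\Vec{U}^1(\mathfrak{B}_m) \cap J_{M,-}^1 = \Vec{U}^1(\mathfrak{B}_M)$, and --- crucially --- that the $1$-dimensional intertwiners of $\eta_{M,-}$ indexed by $\Vec{U}^1(\mathfrak{B}_m)$ assemble into a genuine rather than merely projective extension. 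This projectivity obstruction is precisely what is trivialised by Proposition 3.1.2 combined with the Cayley-map exact-sequence machinery of \cite{St3}, and in (3) additionally by the Iwahori decompositions of $H_-^t, J_-^t$ to be established in Section 4.
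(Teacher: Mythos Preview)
Your proposal is essentially correct and matches the paper's approach: the paper gives no detailed proof at all, merely stating (just before the proposition) that ``Analogous results to Propositions (5.1.14)--(5.1.19) for $GL(N,F)$ in \cite{BK1} can be proved for $G$ in a quite similar way,'' and your outline is a faithful expansion of that remark, correctly identifying Propositions~3.1.1 and~3.1.2 as the classical-group replacements for the $GL(N)$ inputs \cite[(5.1.8), (5.1.2)]{BK1}.

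One caveat concerns your final paragraph. The ``projectivity obstruction'' language is misleading: in the Bushnell--Kutzko argument the extension $\widetilde{\eta}_{M,-}$ is produced directly as an irreducible constituent of the restriction of the (already irreducible) induction $\pi_m$, not by first exhibiting a projective extension and then linearising it. Proposition~3.1.2 enters only as the dimension count that forces this constituent to induce back to $\pi_m$ (hence giving property~(b) and uniqueness), not to kill a $2$-cocycle. Likewise, part~(3) does \emph{not} require the Iwahori decompositions of Section~4; it is proved by the identical template applied to the pair $(\mathfrak{A},\mathfrak{A}_M)$, the only input beyond Propositions~3.1.1 and~3.1.2 being that $\Vec{U}^1(\mathfrak{B})$ normalises $J_{M,-}^1$ (which holds because $\Vec{U}^1(\mathfrak{B})\subset B^\times\cap G$ normalises $J_M^1$ in $\widetilde{G}$, and then one intersects with $G$). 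So you may safely drop the forward reference to Section~4.
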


If $\rho$ is a representation of a compact open subgroup $K$ of $G$, put
\begin{equation*}
I_G(\rho) = \{ g \in G \mid I_g(\rho) \ne (0)\}.
\end{equation*}
We say that an element $g$ of $G$ {\it intertwines} $\rho$, if $g \in I_G(\rho)$.

\begin{prop}[(5.1.19) of \cite{BK1}]       
Let notation and assumptions be as in Proposition 3.1.3. Then we have
\begin{center}
$I_G(\widetilde{\eta}_{M,-}) = J_{M,-}^1(B^\times \cap G)J_{M,-}^1$,\\
\vspace{1mm}
$I_G(\eta_-) = J_-^1(B^\times \cap G)J_-^1$.
\end{center}
\end{prop}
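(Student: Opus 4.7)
The plan is to establish the two equalities separately. The second is a direct application of earlier results; the first requires two opposite inclusions, with the lower bound being the substantive one.

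The second equality $I_G(\eta_-) = J_-^1(B^\times \cap G)J_-^1$ is immediate from Proposition 3.1.1 applied to the good skew simple stratum $[\mathfrak{A},n,0,\beta]$ itself, since $\eta_-$ is precisely the unique irreducible representation of $J_-^1(\beta,\mathfrak{A})$ associated with the skew simple character $\theta_-$.

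For the first equality, set $K = \Vec{U}^1(\mathfrak{B}_m)J_{M,-}^1$. The upper bound $I_G(\widetilde{\eta}_{M,-}) \subseteq J_{M,-}^1(B^\times \cap G)J_{M,-}^1$ follows by restriction. Since $\widetilde{\eta}_{M,-}\vert J_{M,-}^1 = \eta_{M,-}$ by Proposition 3.1.3(1)(a), any nonzero operator $\phi \in \mathrm{Hom}_{K \cap K^g}(\widetilde{\eta}_{M,-},\widetilde{\eta}_{M,-}^g)$ is, as a linear map on the same underlying space, also a nonzero element of $\mathrm{Hom}_{J_{M,-}^1 \cap (J_{M,-}^1)^g}(\eta_{M,-},\eta_{M,-}^g)$, the equivariance condition on the larger group forcing equivariance on the smaller one. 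Thus $g$ intertwines $\eta_{M,-}$, and applying Proposition 3.1.1 to the good skew simple stratum $[\mathfrak{A}_M,-\nu_E(\beta),0,\beta]$ yields the desired inclusion.

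For the reverse inclusion, since $J_{M,-}^1 \subseteq K$ trivially intertwines $\widetilde{\eta}_{M,-}$, it suffices to show $B^\times \cap G \subseteq I_G(\widetilde{\eta}_{M,-})$. The strategy exploits the characterizing property (1)(b) of Proposition 3.1.3: the induced representation $\rho = \mathrm{Ind}_K^{\Vec{U}^1(\mathfrak{A}_m)}\widetilde{\eta}_{M,-}$ is equivalent to $\mathrm{Ind}_{J_{m,-}^1}^{\Vec{U}^1(\mathfrak{A}_m)}\eta_{m,-}$ and irreducible. Given $b \in B^\times \cap G$, Proposition 3.1.1 applied to $\mathfrak{A}_m$ gives a nonzero intertwiner $\eta_{m,-} \to \eta_{m,-}^b$ on $J_{m,-}^1 \cap (J_{m,-}^1)^b$. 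One wants to descend this through the induction to a nonzero element of $\mathrm{Hom}_{K \cap K^b}(\widetilde{\eta}_{M,-},\widetilde{\eta}_{M,-}^b)$, using the uniqueness of the extension $\widetilde{\eta}_{M,-}$ characterized by (1)(a) and (1)(b).

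The main obstacle, and the technical heart of the argument, is that an arbitrary $b \in B^\times \cap G$ normalizes neither $\Vec{U}^1(\mathfrak{B}_m)$ nor $J_{M,-}^1$, so the twist $\widetilde{\eta}_{M,-}^b$ is a representation of a genuinely different compact open subgroup $K^b$, and one cannot directly invoke uniqueness to get $\widetilde{\eta}_{M,-}^b \cong \widetilde{\eta}_{M,-}$ on the nose. Following the pattern of Bushnell--Kutzko (5.1.19), the approach is to apply the Mackey decomposition to $\rho = \mathrm{Ind}_K^{\Vec{U}^1(\mathfrak{A}_m)}\widetilde{\eta}_{M,-}$ on the intersection $\Vec{U}^1(\mathfrak{A}_m) \cap \Vec{U}^1(\mathfrak{A}_m)^b$, match the summands of $\rho\vert_{K \cap K^b}$ with those of $\rho^b\vert_{K \cap K^b}$ using the intertwining of $\eta_{m,-}$ and of $\eta_{M,-}$ by $b$ (the latter from Proposition 3.1.1 applied to $\mathfrak{A}_M$), and pick out the distinguished component whose restriction to $J_{M,-}^1 \cap (J_{M,-}^1)^b$ matches $\eta_{M,-}$. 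The uniqueness in Proposition 3.1.3(1) then forces this component to be $\widetilde{\eta}_{M,-}$ (resp.\ $\widetilde{\eta}_{M,-}^b$) on the two sides, producing the required nonzero intertwiner and completing the proof.
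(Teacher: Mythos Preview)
Your proposal is correct and follows essentially the same approach as the paper: both defer to the argument of \cite[(5.1.19)]{BK1}, with the second equality read off directly from Proposition~3.1.1 and the first handled by the induction/Mackey transfer between $\eta_{m,-}$ and $\widetilde{\eta}_{M,-}$ via Proposition~3.1.3(1)(b). The paper's one-line proof additionally cites \cite[Theorem~2.2]{St2} as the classical-group input replacing the $GL_N$ intertwining results used in \cite{BK1}; your sketch implicitly covers this through the appeals to Proposition~3.1.1 (itself drawn from \cite{St3}), so there is no substantive difference.
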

\begin{proof}
By using \cite[Theorem 2.2]{St2}, we can prove the assertion in the same way as the proof of \cite[(5.1.19)]{BK1}.
\end{proof}

\subsection{}                   

Let $[\mathfrak{A},n,0,\beta]$ be a skew simple stratum in $A$, and $\theta_- \in \mathcal{C}_-(\mathfrak{A},0,\beta)$. Let $E = F[\beta]$ and $B = B_\beta$ be the $A$-centralizer of $\beta$. Let $\eta_-$ be the unique irreducible representation of $J_-^1(\beta,\mathfrak{A})$ which contains $\theta_-$.

\begin{defn}[(5.2.1) of \cite{BK1}]      
A representation $\kappa_-$ of $J_-(\beta,\mathfrak{A})$ is called a $\beta$-{\it extension} of $\eta_-$, if it satisfies 
\begin{enumerate}
  \item $\kappa_- \vert J_-^1(\beta,\mathfrak{A}) = \eta_-$;
  \item $B^\times \cap G \subset I_G(\kappa_-)$.
\end{enumerate}
\end{defn}

We show that if a skew simple stratum $[\mathfrak{A},n,0,\beta]$ in $A$ is good, there is a $\beta$-extension of $\eta_-$.

\begin{lem}            
Let $U, V$ be subgroups of $\widetilde{G}$ fixed by $\Gamma$. Suppose that $U$ normalizes $V$, and that $U \cap V$ is a pro $p$-group. Then we have
\begin{equation*}
(UV)^\Gamma = U^\Gamma V^\Gamma.
\end{equation*}
\end{lem}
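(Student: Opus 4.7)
The plan is direct modulo a cohomological input; the inclusion $U^\Gamma V^\Gamma \subseteq (UV)^\Gamma$ is immediate, so I focus on the reverse. Take $g \in (UV)^\Gamma$ and factor $g = uv$ with $u \in U$, $v \in V$. Applying $\gamma$ and equating with $g$ gives
\begin{equation*}
c := u^{-1}\gamma(u) = v\gamma(v)^{-1}.
\end{equation*}
The left side lies in $U$ and the right in $V$ (both being $\Gamma$-stable), so $c \in U \cap V$; using $\gamma^2 = 1$ one checks $c\gamma(c) = 1$, so $c$ is a $1$-cocycle of $\Gamma$ with values in $U \cap V$.

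The decisive step is to produce $a \in U \cap V$ with $c = a^{-1}\gamma(a)$. Granted this, $u' := ua^{-1}$ lies in $U$, and $\gamma(u') = \gamma(u)\gamma(a)^{-1} = (uc)\gamma(a)^{-1} = u \cdot a^{-1}\gamma(a) \cdot \gamma(a)^{-1} = u'$, so $u' \in U^\Gamma$. Correspondingly $v' := av = (u')^{-1}g$ lies in $V$ (since $a \in V$) and is $\gamma$-fixed, giving $g = u'v' \in U^\Gamma V^\Gamma$, as required.

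The existence of $a$ is the triviality of the non-abelian pointed set $H^1(\Gamma, U \cap V)$, and this is the main obstacle. It follows from a standard averaging/Schur--Zassenhaus-type argument using the hypotheses: filter $U \cap V$ by a descending chain of $\gamma$-stable open normal subgroups (available because $\gamma$ is a continuous involution of finite order) and solve $c = a^{-1}\gamma(a)$ successively modulo each term. On each abelian subquotient $M$, which is pro-$p$ with $p \ne 2$ and hence $2$-divisible, the equation reads $a - \gamma(a) = -c$ with $c + \gamma(c) = 0$ (additively), uniquely solved by $a = -c/2$; passing to the inverse limit yields the desired $a$. The hypothesis that $U \cap V$ is pro-$p$ with $p \ne |\Gamma| = 2$ enters precisely at this cohomological vanishing.
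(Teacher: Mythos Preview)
Your proof is correct and is essentially the same cohomological argument as the paper's: both reduce the nontrivial inclusion to the vanishing of $H^1(\Gamma, U\cap V)$, using that $U\cap V$ is pro-$p$ with $p\neq 2$. The paper packages this via the non-abelian exact sequence from Serre's \emph{Cohomologie Galoisienne} applied to $1\to U\cap V\to U\times V\to UV\to 1$ and simply asserts the $H^1$ vanishing, whereas you unwind the cocycle computation by hand and supply an explicit averaging argument for the vanishing; your version is more self-contained, the paper's is more concise.
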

\begin{proof}
The groups $UV, U \cap V$ are both $\Gamma$-sets. Then we obtain a short sequence
\[1 \xrightarrow{} U \cap V \xrightarrow{\delta} U \times V \xrightarrow{\pi} UV \xrightarrow{} 1, \]
where $\delta(x) = (x,x)$, for $x \in U \cap V$, and $\pi(x,y) = xy^{-1}$, for $x \in U, y \in V$. This is an exact sequence of $\Gamma$-sets. For, we have
\begin{center}
$\delta(\gamma(x)) = (\gamma(x),\gamma(x)) = \gamma(x,x),$\\
\vspace{1mm}
$\pi(\gamma(x),\gamma(y)) = \gamma(x)\gamma(y)^{-1} = \gamma(xy^{-1}) = \gamma(\pi(x,y)),$
\end{center}
for $x \in U, y \in V$. From \cite[Proposition 3.6]{Se}, we thus obtain an exact sequence
\begin{equation*}
 1 \longrightarrow (U \cap V)^\Gamma \longrightarrow (U \times V)^\Gamma \longrightarrow (UV)^\Gamma \longrightarrow H^1(\Gamma,U \cap V) \longrightarrow H^1(\Gamma,U \times V) 
\end{equation*}
Since $U \cap V$ is pro $p$-group and $p$ is not 2, we hence have $H^1(\Gamma,U \cap V) = 1$, whence $(UV)^\Gamma = U^\Gamma V^\Gamma$. The proof is completed.
\end{proof}

\begin{prop}[cf. (5.2.4) of \cite{BK1}]          
Let $[\mathfrak{A},n,0,\beta]$ be a good skew simple stratum in $A$, and $\widetilde{\eta}_{M,-}$ the representation of $\Vec{U}^1(\mathfrak{B}_M)J_{M,-}^1$, as in Proposition 3.1.3. 
Then there is a representation $\kappa_{M,-}$ of $J_{M,-}$ such that $\kappa_{M,-} \vert \Vec{U}^1(\mathfrak{B}_m)J_{M,-}^1 = \widetilde{\eta}_{M,-}$.
\end{prop}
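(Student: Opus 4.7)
The plan is to descend from the corresponding statement for $\widetilde{G} = GL(N,F)$, namely \cite[(5.2.4)]{BK1}, to $G$ via Glauberman's correspondence. All the ingredients on the $\widetilde{G}$-side are $\Gamma$-stable by construction, so the whole argument can be transferred to $G$ once one arranges for the chosen extension in $\widetilde{G}$ to be $\Gamma$-stable as well.

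First, I would lift all the data to $\widetilde{G}$. By the set-up recalled in Section 2.1, there exist a $\Gamma$-stable simple character $\theta_M \in \mathcal{C}^\Gamma(\mathfrak{A}_M,0,\beta)$ with $\Vec{\rm{g}}(\theta_M) = \theta_{M,-}$, a unique $\Gamma$-stable irreducible representation $\eta_M$ of $J_M^1$ containing $\theta_M$ (with $\Vec{\rm{g}}(\eta_M) = \eta_{M,-}$), and by the analogue of Proposition 3.1.3 in $\widetilde{G}$, that is \cite[(5.1.14)]{BK1}, a unique $\Gamma$-stable irreducible representation $\widetilde{\eta}_M$ of $\Vec{U}^1(\mathfrak{B}_m)J_M^1$ whose image under $\Vec{\rm{g}}$ is $\widetilde{\eta}_{M,-}$.

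Second, I would invoke \cite[(5.2.4)]{BK1} to produce an irreducible representation $\kappa_M$ of $J_M = J(\beta,\mathfrak{A}_M)$ whose restriction to $\Vec{U}^1(\mathfrak{B}_m)J_M^1$ equals $\widetilde{\eta}_M$. The set of such extensions is a torsor under the character group $X$ of the quotient $J_M/\Vec{U}^1(\mathfrak{B}_m)J_M^1$, on which $\Gamma$ acts in a natural way because $\widetilde{\eta}_M$ is $\Gamma$-stable. The crux of this step is to choose $\kappa_M$ inside this torsor to be $\Gamma$-stable.

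Third, granted a $\Gamma$-stable $\kappa_M$, I set $\kappa_{M,-} := \Vec{\rm{g}}(\kappa_M)$, a representation of $J_{M,-} = J_M^\Gamma$. Since $\Vec{U}^1(\mathfrak{B}_m)$ and $J_M^1$ are pro-$p$ and $p \ne 2$, Lemma 3.2.2 gives
\[
(\Vec{U}^1(\mathfrak{B}_m)J_M^1)^\Gamma = \Vec{U}^1(\mathfrak{B}_m)J_{M,-}^1,
\]
and the compatibility of Glauberman's correspondence with restriction to $\Gamma$-stable subgroups yields
\[
\kappa_{M,-}\,\vert\,\Vec{U}^1(\mathfrak{B}_m)J_{M,-}^1 \;=\; \Vec{\rm{g}}(\widetilde{\eta}_M) \;=\; \widetilde{\eta}_{M,-},
\]
as required.

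The principal obstacle is the $\Gamma$-stable choice of $\kappa_M$ in the second step: among the extensions of $\widetilde{\eta}_M$ parameterized by $X$, one must exhibit a $\Gamma$-fixed point, i.e.\ show that the $\Gamma$-cohomology class attached to this torsor vanishes. This is precisely the ``claim'' that the introduction announces will be settled in Section 4, using the Iwahori decompositions of $H_-^t(\beta,\mathfrak{A})$ and $J_-^t(\beta,\mathfrak{A})$ relative to the parabolic subgroup $P = MN_u$ of $G$ associated with the good skew simple stratum.
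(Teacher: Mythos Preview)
Your approach is genuinely different from the paper's, and it has a real gap.

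The paper does \emph{not} descend from $\widetilde{G}$ via Glauberman. It works entirely inside $G$, reproducing the structure of the proof of \cite[(5.2.4)]{BK1} for the finite reductive quotient, which here is the finite classical group $\mathcal{G}=U(\overline{V},\overline{h})\simeq J_{M,-}/J_{M,-}^1$. Concretely: $J_{M,-}$ normalizes $\eta_{M,-}$, so one lifts $\eta_{M,-}$ to a projective representation of $J_{M,-}$, linearizes it to some $\lambda$, and observes that $\lambda\vert \Vec{U}^1(\mathfrak{B}_m)J_{M,-}^1=\widetilde{\eta}_{M,-}\otimes\phi$ for a character $\phi$ of the unipotent radical $\mathcal{N}$ of a Borel of $\mathcal{G}$. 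The key computation (using $p\neq 2,3$ and commutator relations in the twisted group) shows $\phi$ is trivial on $[\mathcal{N},\mathcal{N}]$ and, being intertwined by all of $\mathcal{G}$, extends to a character $\phi'$ of $\mathcal{G}$; then $\kappa_{M,-}=\lambda\otimes\phi'^{-1}$ works. No appeal to $\widetilde{G}$ or to Glauberman beyond the level of simple characters is made.

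Your proposal has two problems. First, you never establish that a $\Gamma$-stable $\kappa_M$ exists; you defer this to ``the claim settled in Section~4'', but that claim is something else entirely. The claim in Section~4 (Proposition~4.2.4) concerns the existence, for a given $y\in B^\times\cap G$, of an extension $\mu_-$ of $\eta_-$ that is intertwined by $y$; it is used in the proof of Proposition~3.2.5, not here, and says nothing about $\Gamma$-stability of extensions in $\widetilde{G}$. Second, even granting a $\Gamma$-stable $\kappa_M$, the step
\[
\kappa_{M,-}\vert\,\Vec{U}^1(\mathfrak{B}_m)J_{M,-}^1=\Vec{\rm g}(\widetilde{\eta}_M)
\]
asserts a compatibility of Glauberman's correspondence with restriction to $\Gamma$-stable subgroups that is not a general property of the correspondence. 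In this paper $\Vec{\rm g}$ is only identified with restriction for abelian characters (see 2.1); for the higher-dimensional $\eta$'s and their extensions the paper constructs the $G$-side objects directly (via \cite{St3}) rather than by applying $\Vec{\rm g}$ to $\widetilde{G}$-side objects. So the crucial restriction identity in your third step is unjustified.
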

\begin{proof}
Following the methods of the proof of \cite[(5.2.4)]{BK1}, we prove the assertion. We sketch the proof. 

Put $r = -k_0(\beta,\mathfrak{A})$. From Lemma 3.2.2 and \cite[(3.12)]{St3}, we get
\begin{equation*}
J_{M,-}^1 = \Vec{U}^1(\mathfrak{B}_M)J_-^{[(r+1)/2]}(\beta,\mathfrak{A}_M),\ J_{M,-} = \Vec{U}(\mathfrak{B}_M)J_{M,-}^1.
\end{equation*}
From the case where $e = e(\mathcal{L}_{\mathfrak{o}_E}) = 1$ in Proposition 2.2.5, we have
\begin{equation*}
J_{M,-}/J_{M,-}^1 \simeq \Vec{U}(\mathfrak{B}_M)/\Vec{U}^1(\mathfrak{B}_M) \simeq U(\overline{V},\overline{h}),
\end{equation*}
where $\overline{V} = L_0/\varpi L_0^\natural$ for $L_0 \in \mathcal{L}_{\mathfrak{o}_E}$ in (1.4.1) and $\overline{h}$ is a non-degenerate $k_E/k_{E_0}$-anti-hermitian form, which is naturally induced from the form $\widetilde{h}_\beta$. It follows from Proposition 2.2.5 that $\mathcal{G} = U(\overline{V},\overline{h})$ is a unitary group over $k_{E_0}$ of type $\mathrm{A}^2_{R-1}$.
The canonical image of $\Vec{U}^1(\mathfrak{B}_m)/\Vec{U}^1(\mathfrak{B}_M)$ into $\mathcal{G}$ is the unipotent radical, $\mathcal{N}$, of a Borel subgroup of $\mathcal{G}$. Thus $\Vec{U}^1(\mathfrak{B}_m)J_{M,-}^1$ is a Sylow pro $p$-subgroup of $J_{M,-}$.
Since, from \cite[(3.31)]{St3}, $J_{M,-}$ normalizes $\eta_{M,-}$, we obtain a projective representation of $J_{M,-}$ which is an extension of $\eta_{M,-}$.
We can adjust this projective representation to be a linear representation $\lambda$ of $J_{M,-}$. Then we have
\begin{equation*}
\lambda \vert \Vec{U}^1(\mathfrak{B}_m)J_{M,-}^1 = \widetilde{\eta}_{M,-} \otimes \phi,
\end{equation*}
where $\phi$ is a character of $\Vec{U}^1(\mathfrak{B}_m)$ which is trivial on $\Vec{U}^1(\mathfrak{B}_M)$.
This $\phi$ is a character of $\mathcal{N}$ which is intertwined by all the elements of $\mathcal{G}$. Let $\Phi$ be a root system of $\mathcal{G}$ and $\Delta$ the set of simple roots in $\Phi$, associated with $\mathcal{N}$. We denote by $U_a$ the root subgroup of $\mathcal{G}$ associated with $a \in \Phi$, and by [$\mathcal{N},\mathcal{N}$] the commutator group of $\mathcal{N}$. Let $ht$ be the height function on $\Phi$ with respect to the basis $\Delta$. 
Then, under the assumption $p \ne 2, 3$, by using the commutator relations in the twisted group $\mathcal{G}$ of $GL(R,k_E)$, we can easily see that $[\mathcal{N},\mathcal{N}] = \prod_{a} U_a$, where $a$ runs through roots in $\Phi$ with $ht(a)\ge 2$, (cf. \cite[Section 11]{St}, \cite[Section 13]{Ca2}) and see that there is a canonical isomorphism
\begin{equation*}
\mathcal{N}/[\mathcal{N},\mathcal{N}] \simeq \displaystyle \prod_{a \in \Delta} U_a.
\end{equation*}
As in \cite[8.1]{Ca2}, this fact holds for any finite group of Lie type.
Thus $\phi$ is trivial on $\mathcal{N}$ and can be extended to a character $\phi'$ of $\mathcal{G}$, like the proof of \cite[(5.2.4)]{BK1} for $GL(N,F)$.
We regard $\phi'$ as a character of $J_{M,-}$, and put
\begin{equation*}
\kappa_{M,-} = \lambda \otimes \phi^{\prime -1}.
\end{equation*}
It easily seen that the representation $\kappa_{M,-}$ is the desired.
\end{proof}

\begin{prop}[cf. (5.2.5) of \cite{BK1}]        
Let $\kappa_{M,-}$ be the representation as in 3.2.3. Then there is a representation $\kappa_-$ of $J_-$ which is uniquely determined by the following properties:
\begin{enumerate}
  \item $\kappa_- \vert J_-^1 = \eta_-$;
  \item $\kappa_-$ and $\kappa_{M,-} \vert \Vec{U}(\mathfrak{B})J_{M,-}^1$ induce equivalent irreducible representations of $\Vec{U}(\mathfrak{A})$.\
  \item $\mathrm{Ind}(\kappa_-:J_-,\Vec{U}(\mathfrak{B})\Vec{U}^1(\mathfrak{A}))$ is equivalent to
  \begin{equation*}
  \mathrm{Ind}(\kappa_{M,-} \vert \Vec{U}(\mathfrak{B})J_{M,-}^1:\Vec{U}(\mathfrak{B})J_{M,-}^1, \Vec{U}(\mathfrak{B})\Vec{U}^1(\mathfrak{A})).
  \end{equation*}
\end{enumerate}
\end{prop}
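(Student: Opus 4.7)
The plan is to reproduce the argument of \cite[(5.2.5)]{BK1} in the fixed-point setting, using Proposition 3.1.3(3) as the crucial input in place of \cite[(5.1.18)]{BK1}. The idea is to exhibit the representation
\[ \Pi \;=\; \mathrm{Ind}_{\Vec{U}(\mathfrak{B})J_{M,-}^1}^{\Vec{U}(\mathfrak{B})\Vec{U}^1(\mathfrak{A})}\bigl(\kappa_{M,-}|\Vec{U}(\mathfrak{B})J_{M,-}^1\bigr) \]
of $\Vec{U}(\mathfrak{B})\Vec{U}^1(\mathfrak{A})$ as irreducible, and then to recognise it as the induction of a unique extension $\kappa_-$ of $\eta_-$ to $J_-$.

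First I would check that $\Pi$ is irreducible. Since the restriction of $\kappa_{M,-}$ to $\Vec{U}^1(\mathfrak{B})J_{M,-}^1$ is an irreducible extension of $\eta_{M,-}$, the uniqueness of $\hat{\eta}_{M,-}$ in Proposition 3.1.3(3) forces this restriction to coincide with $\hat{\eta}_{M,-}$. Transitivity of induction together with Proposition 3.1.3(3) then yields
\[ \Pi|\Vec{U}^1(\mathfrak{A}) \;\cong\; \mathrm{Ind}_{\Vec{U}^1(\mathfrak{B})J_{M,-}^1}^{\Vec{U}^1(\mathfrak{A})}\hat{\eta}_{M,-} \;\cong\; \mathrm{Ind}_{J_-^1}^{\Vec{U}^1(\mathfrak{A})}\eta_-, \]
which is irreducible by Proposition 3.1.1 (intertwining index one). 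Hence $\Pi$ itself is irreducible.

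Next I would descend $\Pi$ to $J_-$ to produce $\kappa_-$. Combining $J_- = \Vec{U}(\mathfrak{B})J_-^1$, Lemma 3.2.2, and the fact that $\Vec{U}(\mathfrak{B}) \cap \Vec{U}^1(\mathfrak{A}) = \Vec{U}^1(\mathfrak{B}) \subset J_-^1$, one obtains $\Vec{U}(\mathfrak{B})\Vec{U}^1(\mathfrak{A}) = J_- \cdot \Vec{U}^1(\mathfrak{A})$ with $J_- \cap \Vec{U}^1(\mathfrak{A}) = J_-^1$, so that $J_-/J_-^1$ is identified with $\Vec{U}(\mathfrak{B})\Vec{U}^1(\mathfrak{A})/\Vec{U}^1(\mathfrak{A})$. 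Under this identification, Mackey's formula shows that induction is a bijection between irreducible representations of $J_-$ whose restriction to $J_-^1$ is $\eta_-$ and irreducible representations of $\Vec{U}(\mathfrak{B})\Vec{U}^1(\mathfrak{A})$ whose restriction to $\Vec{U}^1(\mathfrak{A})$ is $\mathrm{Ind}_{J_-^1}^{\Vec{U}^1(\mathfrak{A})}\eta_-$; applying it to $\Pi$ yields the unique $\kappa_-$ satisfying (1) and (3). Property (2) then follows by inducing (3) further from $\Vec{U}(\mathfrak{B})\Vec{U}^1(\mathfrak{A})$ up to $\Vec{U}(\mathfrak{A})$ and invoking Proposition 3.1.3(3) one more time.

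The main obstacle is the Clifford-theoretic descent in the third paragraph: verifying that induction provides an exact bijection between the relevant sets of irreducibles rests on the decomposition $\Vec{U}(\mathfrak{B})\Vec{U}^1(\mathfrak{A}) = J_- \cdot \Vec{U}^1(\mathfrak{A})$ with $J_- \cap \Vec{U}^1(\mathfrak{A}) = J_-^1$, which is precisely where the goodness of the stratum enters, via the product structure of $\Vec{U}(\mathfrak{B})/\Vec{U}^1(\mathfrak{B})$ from Proposition 2.2.5 and the cohomology-vanishing input of Lemma 3.2.2.
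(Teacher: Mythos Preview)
Your approach is the same as the paper's: the paper simply says that, using Proposition~3.1.2, one argues exactly as in the proof of \cite[(5.2.5)]{BK1}, and your sketch is a faithful outline of that argument.

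There is, however, one step where your justification is too quick, and it is precisely the place where the paper singles out Proposition~3.1.2. You assert that ``the uniqueness of $\hat{\eta}_{M,-}$ in Proposition~3.1.3(3) forces this restriction to coincide with $\hat{\eta}_{M,-}$.'' But the uniqueness in Proposition~3.1.3(3) is subject to \emph{both} conditions (a) and (b), and condition (b) is exactly the statement that the induced representation on $\Vec{U}^1(\mathfrak{A})$ agrees with $\mathrm{Ind}_{J_-^1}^{\Vec{U}^1(\mathfrak{A})}\eta_-$ --- which is what you are trying to prove. Knowing only (a), the restriction $\kappa_{M,-}\vert \Vec{U}^1(\mathfrak{B})J_{M,-}^1$ is merely \emph{some} irreducible extension of $\eta_{M,-}$, and any two such differ by a character of $\Vec{U}^1(\mathfrak{B})J_{M,-}^1/J_{M,-}^1 \cong \Vec{U}^1(\mathfrak{B})/\Vec{U}^1(\mathfrak{B}_M)$, a nontrivial pro-$p$ group; so uniqueness from (a) alone fails. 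What you do know from Proposition~3.2.3 is the stronger fact $\kappa_{M,-}\vert \Vec{U}^1(\mathfrak{B}_m)J_{M,-}^1 = \widetilde{\eta}_{M,-}$, and one must then either invoke a compatibility between parts (1)--(3) of Proposition~3.1.3 (present in \cite{BK1} but not explicitly stated here) or, as the paper indicates, use the index relation of Proposition~3.1.2 to compare $\dim\Pi$ with $\dim\bigl(\mathrm{Ind}_{J_-^1}^{\Vec{U}^1(\mathfrak{A})}\eta_-\bigr)$ and close the circle. With that patched, the remainder of your argument goes through unchanged.
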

\begin{proof}
Using Proposition 3.1.2, we can prove the assertion in the same way as the proof of \cite[(5.2.5)]{BK1}. 
\end{proof}

We show that the representation $\kappa_-$ in Proposition 3.2.4 is a $\beta$-extension.

\begin{prop}[cf. (5.2.7) of \cite{BK1}]         
Let $\kappa_-$ be the representation of $J_-$ constructed as in Proposition 3.2.4. Then we have
\begin{equation*}
I_G(\kappa_-) = J_-(B^\times \cap G)J_- = J_-^1(B^\times \cap G)J_-^1.
\end{equation*}
\end{prop}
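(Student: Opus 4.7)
The plan is to follow the blueprint of \cite[(5.2.7)]{BK1}, adapted to the $\Gamma$-fixed setting. The statement splits into three pieces: the second equality $J_-(B^\times \cap G) J_- = J_-^1 (B^\times \cap G) J_-^1$, the upper bound $I_G(\kappa_-) \subset J_-^1(B^\times \cap G) J_-^1$, and the lower bound $J_-(B^\times \cap G) J_- \subset I_G(\kappa_-)$. The second equality is a group-theoretic collapse: since $J_- = \Vec{U}(\mathfrak{B}) J_-^1 = J_-^1 \Vec{U}(\mathfrak{B})$ and $\Vec{U}(\mathfrak{B}) \subset B^\times \cap G$, each outer $\Vec{U}(\mathfrak{B})$ factor can be absorbed into the middle $B^\times \cap G$. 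The upper bound follows from Proposition 3.2.4(1): if a nonzero $T \in \mathrm{Hom}_{(J_-)^g \cap J_-}(\kappa_-, \kappa_-^g)$ exists, restricting it to $(J_-^1)^g \cap J_-^1$ produces a nonzero element of $I_g(\eta_-)$, and Proposition 3.1.4 forces $g \in J_-^1(B^\times \cap G) J_-^1$.

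The substantive part is the lower bound, which, given the trivial inclusion $J_- \subset I_G(\kappa_-)$, reduces to the claim $B^\times \cap G \subset I_G(\kappa_-)$. I first establish the maximal analogue $B^\times \cap G \subset I_G(\kappa_{M,-})$. By construction in Proposition 3.2.3, $\kappa_{M,-}$ is a linear extension of $\widetilde{\eta}_{M,-}$ to $J_{M,-}$ obtained by adjusting a projective lift of $\eta_{M,-}$ (available because $\Vec{U}^1(\mathfrak{B}_m) J_{M,-}^1$ is a Sylow pro-$p$ subgroup of $J_{M,-}$) and twisting by the inverse of a character $\phi'$ of the reductive quotient $J_{M,-}/J_{M,-}^1 \simeq U(\overline{V}, \overline{h})$. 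Any $b \in B^\times \cap G$ intertwines $\eta_{M,-}$ with one-dimensional intertwining space by Proposition 3.1.4; since $\phi'$ extends a character $\phi$ shown in Proposition 3.2.3 to be trivial on the commutator of the unipotent radical (using $p \ne 2, 3$), the unique $\eta_{M,-}$-intertwining should extend to a nonzero element of $I_b(\kappa_{M,-})$.

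To transfer this to $\kappa_-$, I would use property (3) of Proposition 3.2.4, which identifies $\mathrm{Ind}_{J_-}^{\Vec{U}(\mathfrak{B}) \Vec{U}^1(\mathfrak{A})}(\kappa_-)$ with $\mathrm{Ind}_{\Vec{U}(\mathfrak{B}) J_{M,-}^1}^{\Vec{U}(\mathfrak{B}) \Vec{U}^1(\mathfrak{A})}(\kappa_{M,-}\vert \Vec{U}(\mathfrak{B}) J_{M,-}^1)$. A Mackey-style analysis of these two inductions, together with the dimension count in Proposition 3.1.2, should convert a $b$-intertwiner of $\kappa_{M,-}\vert \Vec{U}(\mathfrak{B}) J_{M,-}^1$ into a $b$-intertwiner of $\kappa_-$; the uniqueness clause in Proposition 3.2.4 then prevents any extraneous extension from arising.

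The main obstacle I expect is this descent step. In \cite[(5.2.7)]{BK1} the analogous argument rests on Iwahori decompositions of $J^1, H^1$ relative to an appropriate parabolic subgroup and a delicate intertwining-dimension count. In our setting the corresponding Iwahori decompositions of $H_-^t, J_-^t$ relative to $P = M N_u$ are deferred to Section 4, and every passage from a $\widetilde{G}$-level statement to a $G$-level one must invoke Lemma 3.2.2 (which uses $p \ne 2$ to kill $H^1(\Gamma, \cdot)$ on pro-$p$ subgroups). Assembling these ingredients into a clean Mackey argument, mirroring the $GL(N,F)$ case but with the extra $\Gamma$-fixed bookkeeping, is the delicate point.
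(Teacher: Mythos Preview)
Your treatment of the second equality and the upper bound is fine, and you are right that the reduction from $\kappa_-$ to $\kappa_{M,-}$ via Proposition~3.2.4(2) is the easy direction. But the substantive gap is in your argument for $B^\times \cap G \subset I_G(\kappa_{M,-})$: the sentence ``the unique $\eta_{M,-}$-intertwining should extend to a nonzero element of $I_b(\kappa_{M,-})$'' is precisely the entire difficulty, and you have not said why it holds. Knowing that $b$ intertwines $\eta_{M,-}$ with one-dimensional space does not by itself promote to an intertwining of $\kappa_{M,-}$; the extension $\kappa_{M,-}$ of $\eta_{M,-}$ was built by an ad hoc twist, and nothing in Proposition~3.2.3 tells you that this particular twist is compatible with conjugation by an arbitrary $b$. (You also conflate the construction-time character $\phi$ of Proposition~3.2.3, which is shown to be trivial on all of $\mathcal{N}$, with an intertwining obstruction; these are different objects.)

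The paper's proof fills this gap with a long detour: reduce via the Cartan decomposition of $B^\times \cap G$ relative to $\Vec{U}(\mathfrak{B}_M)$ to a diagonal element $y$; build an auxiliary self-dual lattice chain $\mathcal{L}'$ adapted to $y$ so that $y$ centralizes the block-diagonal piece $\widetilde{\Vec{\mathfrak{M}}}(\mathfrak{B}')$; apply \cite[(5.2.11)]{BK1} to see that $y$ intertwines $\kappa_{M,-}\vert \Vec{U}(\mathfrak{B}')J_{M,-}^1$ only up to a twisting character $\phi$ on $\Vec{\mathfrak{M}}(\mathfrak{B}')^\times$; and then kill $\phi$ by producing \emph{some} extension $\mu_-$ of $\eta_-$ (for the $\mathcal{L}'$-stratum) that $y$ does intertwine. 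This last step is the ``Claim'' proved in Section~4.2, and it is exactly where the Iwahori decompositions of $H_-^t, J_-^t$ relative to $P = MN_u$ are used --- not in the descent from $\kappa_{M,-}$ to $\kappa_-$ as you suggest, but in building $\mu_-$ by inducing a blockwise tensor product from $(J_-\cap P)H_-^1$. Only after $\phi$ is shown trivial does one lift the intertwining from $\Vec{U}(\mathfrak{B}')J_{M,-}^1$ to $J_{M,-}$ via the decomposition of $J_{M,-}\cap J_{M,-}^y$. So you have misplaced the hard step: the descent you flag as the ``main obstacle'' is dispatched in one line, while the maximal case you treat in two sentences is where all the work lies.
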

\begin{proof}
The proof of \cite[(5.2.7)]{BK1} for $GL(N,F)$ remains valid for our classical $G$, as well. We also sketch the proof.

By the Witt basis $\mathcal{V}$ of (2.2.2), we express elements of $B^\times \cap G$ in matrix form, that is, $B^\times \cap G$ is embedded in $GL(R,E)$, where $R = \dim_E(V)$. 
Moreover, $\Vec{U}(\mathfrak{B}_M)$ is embedded in $GL(R,\mathfrak{o}_E)$, and it is a special maximal compact subgroup of $B^\times \cap G$. Thus $B^\times \cap G$ has a Cartan decomposition relative to $\Vec{U}(\mathfrak{B})$. 

From \cite[(3.13)]{St3}, $I_G(\kappa_-) \subset I_G(\eta_-) = J_-(B^\times \cap G)J_-$. So it is enough to prove that any element $y$ of $B^\times \cap G$ intertwines $\kappa_-$. 
Moreover, by Proposition 3.2.4 (2), it is enough to treat the case where $\mathcal{L} = \mathcal{L}_M$ and $\kappa_- = \kappa_{M,-}$. Since $\Vec{U}(\mathfrak{B}_M) \subset J_- \cap B^\times \cap G$, we can choose $y$ in a $(\Vec{U}(\mathfrak{B}_M),\Vec{U}(\mathfrak{B}_M))$-double coset, and reduce it to a diagonal element 
\begin{equation*}
\mathrm{Diag}(\varpi_E^{n_1}, \cdots, \varpi_E^{n_r}, \varpi_E ^{-n_r}, \cdots, \varpi_E ^{-n_1})
\end{equation*}
where $r = R/2$ and $n_1, n_2, \cdots, n_r$ are integers with $n_1 \ge n_2 \ge \cdots \ge n_r$. Here we recall that $E/E_0$ is unramified.
As in the proof of \cite[(5.2.7)]{BK1}, we can choose a self-dual $\mathfrak{o}_E$-lattice chain $\mathcal{L}'$ in $V$, with $e(\mathcal{L}'_{\mathfrak{o}_E}) = e'$, for some integer $e' \ge 1$, which satisfies the following properties:
\begin{enumerate}
  \item the self-dual slice of $\mathcal{L}'$ of the form (1.4.1) satisfies $L_0^\natural = L_0$,\
  \item This lattice $L_0$ is the same as that of $\mathcal{L}$,\
  \item for the $E$-decomposition $V =  \bigoplus_{i=1}^{e'} V^i$ subordinated to $\mathcal{L}'$, the element $y$ has a diagonal block form $(y_i)$, and each $y_i $ in  $\mathrm{End}_E(V^i)$ is central, for $1 \le i \le e'$.
\end{enumerate}

From Proposition 1.4.2, $\mathcal{L}'$ is also a self-dual $\mathfrak{o}_F$-lattice chain in $V$. Put $\mathfrak{B}' = \mathrm{End}_{\mathfrak{o}_F}^0(\mathcal{L}') \cap B$. 
From part (2) of the above properties, elements of $\mathfrak{B}'$ are written in the following block form: $(x_{jk}),\ 1 \le j, k \le e'$, such that coefficients of $n_j \times n_k$-matrix $x_{jk}$ are all in $\mathfrak{o}_E$ if $j \le k$, and all in $\mathfrak{p}_E$ otherwise, where $R = n_1 + n_2 + \cdots + n_{e'}$ is the partition of $R$ associated with $\mathcal{L}'$.
Put
\begin{equation*}
\widetilde{\Vec{\mathfrak{M}}}(\mathfrak{B}') = \{ (x_{jk}) \in \mathfrak{B}' \mid x_{jk} = 0, \text{for all}\ j \ne k \}.
\end{equation*}
Then it follows from Proposition 2.2.4 that the involution $^-$ fixes $\widetilde{\Vec{\mathfrak{M}}}(\mathfrak{B}')$. Thus we have
\begin{equation*}
\Vec{\mathfrak{M}}(\mathfrak{B}')^\times = (\widetilde{\Vec{\mathfrak{M}}}(\mathfrak{B}')^\times)^\Gamma = \widetilde{\Vec{\mathfrak{M}}}(\mathfrak{B}') \cap G.
\end{equation*}
From the proof of \cite[(5.2.7)]{BK1}, we have
\begin{enumerate}
  \item $y$ centralizes $\widetilde{\Vec{\mathfrak{M}}}(\mathfrak{B}')$;\
  \item $\mathfrak{B}_M \cap \mathfrak{B}_M^y \subset \mathfrak{p}_F\mathfrak{B}_M + (\mathfrak{B}' \cap (\mathfrak{B}')^y),$
\end{enumerate}
where $L^y = y^{-1}Ly$. We denote by $^t\mathfrak{B}'$ the transpose of $\mathfrak{B}'$. Then we also have
\begin{enumerate}
  \item $y^{-1}$ centralizes $^t\widetilde{\Vec{\mathfrak{M}}}(\mathfrak{B}')$;\
 \item $\mathfrak{B}_M \cap\ ^y\mathfrak{B}_M \subset \mathfrak{p}_F\mathfrak{B}_M +\ ^y(^t \mathfrak{B}' \cap (^t \mathfrak{B}')),$
\end{enumerate}
where  $^yL = yLy^{-1}$.

If $\mathfrak{B}' = \mathfrak{B}_M$, clearly $y = 1$. We note that this fact never occurs for the case of $GL(N,F)$. Thus $y = 1$ trivially intertwines $\kappa_{M,-}$.

From \cite[p.173]{BK1} together with Lemma 3.2.2, we obtain
\begin{eqnarray*}
(\Vec{\mathfrak{M}}(\mathfrak{B}')^\times\Vec{U}^1(\mathfrak{B}')J_M^1) &\cap& (\Vec{\mathfrak{M}}(\mathfrak{B}')^\times \Vec{U}^1(\mathfrak{B}')J_M^1)^y \\
                                                                        &=& \Vec{\mathfrak{M}}(\mathfrak{B}')^\times(\Vec{U}^1(\mathfrak{B}')J_M^1 \cap (\Vec{U}^1(\mathfrak{B}')J_M^1)^y)
\end{eqnarray*}
in $\widetilde{G}$. 
It follows from Lemma 3.2.2 and \cite[(5.2.11)]{BK1} that the element $y$ intertwines $\kappa_{M,-} \vert \Vec{U}(\mathfrak{B}')J_{M,-}^1$ with $\kappa_{M,-} \vert \Vec{U}(\mathfrak{B}')J_{M,-}^1 \otimes \phi$, 
where $\phi$ is an abelian character of $\Vec{\mathfrak{M}}(\mathfrak{B}')^\times/(\Vec{\mathfrak{M}}(\mathfrak{B}')^\times \cap \Vec{U}^1(\mathfrak{B}')J_{M,-}^1).$
For the lattice chain $\mathcal{L}'$ in $V$, we can choose the minimal self-dual $\mathfrak{o}_E$-lattice chain $\mathcal{L}'_M = \mathcal{L}_M$, given in 3.1, and a maximal self-dual  $\mathfrak{o}_E$-lattice chain $\mathcal{L}'_m$ in $V$, such that $\mathcal{L}'_m \subset \mathcal{L}' \subset \mathcal{L}'_M$. 
Then we can see that $\phi$ is factored through the determinant, in a suitable sense (cf.  \cite[ p.173]{BK1}). 
Let $\kappa_-$ be the representation of $J_-(\beta,\mathfrak{A}')$ given by Proposition 3.2.4, where $\mathfrak{A}' = \mathrm{End}_{\mathfrak{o}_F}^0(\mathcal{L}')$. 
We can form the representation $\kappa_- \otimes \phi$, and by using Propositions 3.2.4 and 3.1.1, we can prove that $y$ intertwines $\kappa_-$ with $\kappa_- \otimes \phi$.

\vspace{3mm}
{\bf Claim} There is an extension $\mu_{-}$ of $\eta_{-}$ intertwined by $y$.
\vspace{3mm}

We shall prove the claim in 4.2 below. We now assume that the claim is true. We also apply $H = J_{-}^1,\ N = \Vec{\mathfrak{M}}(\mathfrak{B}')^\times,\ g = y,\ \rho = \eta_{-}$ to \cite[(5.2.11)]{BK1}. Then these satisfy those hypotheses. 
In particular, we apply $\kappa_{-}$ to $\widetilde{\rho}$ there. We now apply $\mu_{-}$ to $\rho'$ in part (a) of \cite[(5.2.11)]{BK1} so that $y$ intertwines $\mu_{-}$ with $\mu_{-} \otimes \phi$. 
Thus the uniqueness of $\phi$ shows that $\phi$ is trivial. Hence we have seen that $y$ intertwines $\kappa_{M,-} \vert \Vec{U}(\mathfrak{B}')J_{M,-}^1$.

From the proof of \cite[(5.2.7)]{BK1} and Lemma 3.2.2, we obtain
\begin{equation*}
J_{M,-} \cap J_{M,-}^y = (\Vec{U}^1(\mathfrak{B}_M) \cap \Vec{U}(\mathfrak{B}_M)^y)(\Vec{U}(\mathfrak{B}')J_{M,-}^1 \cap (\Vec{U}(\mathfrak{B}')J_{M,-}^1)^y).
\end{equation*}
Similarly,
\begin{equation*}
(\Vec{U}^1(\mathfrak{B}_M) \cap \Vec{U}(\mathfrak{B}_M)^y) \subset (\Vec{U}(\mathfrak{B}_M) \cap \Vec{U}^1(\mathfrak{B}_M)^y)(\Vec{U}(\mathfrak{B}^\prime) \cap \Vec{U}(\mathfrak{B}^\prime)^y).
\end{equation*}
Hence we can prove that $y$ intertwines $\kappa_{M,-}$ in the same way as the proof of \cite[(5.2.7)]{BK1}. The proof is completed modulo the claim.
\end{proof}

\begin{thm}         
Let $[\mathfrak{A},n,0,\beta]$ be a good skew simple stratum in $A$, and $\theta_{-} \in \mathcal{C}_{-}(\mathfrak{A},0,\beta)$. Let $\eta_{-}$ be the unique irreducible representation of $J_{-}^1(\beta,\mathfrak{A})$ which contains $\theta_{-}$. Then there is a $\beta$-extension of $\eta_{-}$.
\end{thm}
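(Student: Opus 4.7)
The plan is to assemble the three propositions of Section 3.2 into a single statement. Since Definition 3.2.1 requires a representation $\kappa_-$ of $J_-(\beta,\mathfrak{A})$ satisfying (1) $\kappa_- \vert J_-^1(\beta,\mathfrak{A}) = \eta_-$ and (2) $B^\times \cap G \subset I_G(\kappa_-)$, I need only to exhibit such a $\kappa_-$ and verify these two conditions, and essentially all the work has already been done.

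First I would invoke Proposition 3.1.3(1) to obtain the canonical extension $\widetilde{\eta}_{M,-}$ of $\eta_{M,-}$ to the group $\Vec{U}^1(\mathfrak{B}_m)J_{M,-}^1$, starting from the transfer $\theta_{M,-} = \tau_{\mathfrak{A}_m,\mathfrak{A}_M,\beta,0}(\theta_{m,-})$ of a common simple character $\theta_{m,-} \in \mathcal{C}_-(\mathfrak{A}_m,0,\beta)$ whose transfer to $\mathfrak{A}$ is $\theta_-$. Then Proposition 3.2.3 produces a representation $\kappa_{M,-}$ of $J_{M,-}$ restricting to $\widetilde{\eta}_{M,-}$; this is where the hypothesis $p \neq 2,3$ and the unitary structure of the reductive quotient (the type $\mathrm{A}^2_{R-1}$ group furnished by Definition 2.2.1 and Proposition 2.2.5) are used to kill the character $\phi$ via $[\mathcal{N},\mathcal{N}]$.

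Next I would apply Proposition 3.2.4 to $\kappa_{M,-}$ to obtain the representation $\kappa_-$ of $J_-$ satisfying condition (1) of Definition 3.2.1 by property (1) there. Finally, Proposition 3.2.5 yields $I_G(\kappa_-) = J_-(B^\times \cap G)J_-$, which in particular contains $B^\times \cap G$, giving condition (2) of Definition 3.2.1. Thus $\kappa_-$ is a $\beta$-extension of $\eta_-$.

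The principal obstacle is of course not in the theorem itself but inside Proposition 3.2.5, where the argument rests on the Claim (existence of an extension $\mu_-$ of $\eta_-$ intertwined by the diagonal element $y$) whose verification is postponed to Section 4.2. Accepting that claim, the proof of the theorem reduces to a three-line citation: Proposition 3.2.3 produces $\kappa_{M,-}$, Proposition 3.2.4 produces $\kappa_-$ and supplies (1), and Proposition 3.2.5 supplies (2). No new computation is needed beyond assembling these pieces.
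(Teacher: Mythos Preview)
Your proposal is correct and follows essentially the same approach as the paper: the paper's proof is literally the two-line citation ``the assertion follows directly from Propositions 3.2.4 and 3.2.5 (modulo the claim),'' and your account simply unwinds the prerequisites (3.1.3 and 3.2.3) feeding into 3.2.4 while correctly flagging that the Claim inside 3.2.5 is deferred to Section 4.2. No discrepancy.
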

\begin{proof}
The assertion follows directly from Propositions 3.2.4 and 3.2.5 (modulo the claim).
\end{proof}

To prove the claim, the following lemma will be used in next section.

\begin{lem}                 
Let $\mathcal{L}'$ be the self-dual $\mathfrak{o}_E$-lattice chain in $V$ associated with $y \in B^\times \cap G$ in the proof of Proposition 3.2.5. Let $\mathfrak{A}' = \mathrm{End}_{\mathfrak{o}_F}^0(\mathcal{L}')$ and $n' = -\nu_{\mathfrak{A}'}(\beta)$. Then $[\mathfrak{A}',n',0,\beta]$ is a good skew simple stratum in $A$.
\end{lem}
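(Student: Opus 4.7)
The plan is to verify, in turn, that $[\mathfrak{A}',n',0,\beta]$ is a skew simple stratum (Definitions 2.1.1--2.1.3) and that it satisfies conditions (1)--(3) of Definition 2.2.1. Since $\beta$ and the ambient data $(V,h,F/F_0)$ are unchanged, most requirements are inherited verbatim from the goodness of $[\mathfrak{A},n,0,\beta]$.

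First I would dispatch the easy points. Conditions (1) and (2) of Definition 2.2.1 depend only on $E/E_0$ and on $R = \dim_E V$, so they transfer immediately. Condition (3) is precisely property (1) recorded in the construction of $\mathcal{L}'$ inside the proof of Proposition 3.2.5, namely that the self-dual slice of $\mathcal{L}'_{\mathfrak{o}_E}$ contains an $\mathfrak{o}_E$-lattice $L_0$ with $L_0^\natural = L_0$. For skewness, $\beta \in \mathfrak{G}$ is inherited, and $\mathcal{L}'$ is self-dual as an $\mathfrak{o}_E$-chain with respect to $\widetilde{h}_\beta$ by construction, hence self-dual as an $\mathfrak{o}_F$-chain with respect to $h$ by Proposition 1.4.2.

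Next I would verify purity of $[\mathfrak{A}',n',0,\beta]$ in the sense of Definition 2.1.2. The condition that $E = F[\beta]$ is a field is inherited; $E^\times \subset \mathfrak{K}(\mathfrak{A}')$ follows automatically from the fact that $\mathcal{L}'$ is an $\mathfrak{o}_E$-lattice chain, since any $e \in E^\times$ then permutes the lattices in $\mathcal{L}'$; and $\nu_{\mathfrak{A}'}(\beta) = -n'$ is true by the very definition of $n'$.

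The only substantive point is simplicity, i.e.\ $k_0(\beta,\mathfrak{A}') < 0$. Here I would invoke the standard invariance of the critical exponent across hereditary orders whose normalizer contains $E^\times$, as in \cite[(1.4.13)]{BK1}: either $\beta$ is minimal over $F$, in which case $k_0(\beta,\mathfrak{A}') = -\infty$ and simplicity is automatic; or
\begin{equation*}
\frac{k_0(\beta,\mathfrak{A}')}{e(\mathcal{L}'_{\mathfrak{o}_E})} \;=\; \frac{k_0(\beta,\mathfrak{A})}{e(\mathcal{L}_{\mathfrak{o}_E})} \;=\; k_F(\beta),
\end{equation*}
which is negative by simplicity of $[\mathfrak{A},n,0,\beta]$, and hence $k_0(\beta,\mathfrak{A}') < 0$. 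This invariance appeal is the only nontrivial step, but it is entirely standard and requires no modification for the involution-fixed setting, since $k_0$ is defined purely in terms of $\beta$ and the associated lattice filtration, and does not see the form $h$.
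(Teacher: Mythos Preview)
Your proposal is correct and is precisely the routine verification that the paper declines to spell out: the paper's own proof is the single word ``Straightforward.'' Your argument checks each clause of Definitions 2.1.1--2.1.3 and 2.2.1 in the obvious way, with the only nontrivial input being the scaling relation for $k_0$ from \cite[(1.4.13)]{BK1}, which is exactly the standard device one uses here.
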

\begin{proof}
Straightforward.
\end{proof}

\section{Iwahori decompositions}         

\subsection{}
We prove the claim in the proof of Proposition 3.2.5.

Suppose that $[\mathfrak{A},n,0,\beta]$ is a skew simple stratum in $A$, with $\mathfrak{A} = \mathfrak{A}(\mathcal{L})$. Let $E = F[\beta]$, and $B = B_\beta$ be the $A$-centralizer of $\beta$. Put $e = e(\mathcal{L}_{\mathfrak{o}_E})$. For the $E$-decomposition $V = \bigoplus_{i=1}^e V^i$ of (2.2.1) subordinated to $\mathcal{L}_{\mathfrak{o}_E}$, put
\begin{equation*}
A^{ij} = \mathrm{Hom}_F(V^j, V^i),\ A^i = A^{ii},\ \text{for}\ 1 \le i, j \le e.
\end{equation*}
We define subgroups of $\widetilde{G}$ as follows:
\begin{enumerate}
  \item $\widetilde{P} = \widetilde{G} \cap (\displaystyle \prod_{1 \le i < j \le e} A^{ij})$;\\
  \item $\widetilde{M} = \widetilde{G} \cap (\displaystyle \prod_{1 \le i \le e} A^i)$;\\
  \item $\mathbb{N}_u = \displaystyle \prod_{1 \le i < j \le e} A^{ij},\ \widetilde{N}_u = 1 + \mathbb{N}_u$;\\
  \item $\mathbb{N}_\ell = \displaystyle \prod_{1 \le j < i \le e} A^{ij},\ \widetilde{N}_\ell = 1 + \mathbb{N}_\ell$.
\end{enumerate}
Each $\mathfrak{o}_E$-lattice $L_k$ in $\mathcal{L}_{\mathfrak{o}_E}$ has a decomposition $L_k = \coprod_{1 \le i \le e} L_k^i$, with $L_k^i = L_k \cap V^i$, for $k \in \mathbb{Z}$.
From \cite[(7.1.12)]{BK1}, there is a canonical isomorphism
\begin{equation*}
H^1(\beta,\mathfrak{A}) \cap \widetilde{M} \simeq \displaystyle \prod_{i=1}^e H^i(\beta,\mathfrak{A}^{(i)}),
\end{equation*}
where $\mathfrak{A}^{(i)} = \mathrm{End}_{\mathfrak{o}_F}^0(\{L_k^i \mid k \in \mathbb{Z}\})$, for $1 \le i \le e$

\begin{prop}[(7.1.19) of \cite{BK1}]            
Let $[\mathfrak{A},n,0,\beta]$ be a simple stratum in $A$, with $\mathfrak{A} = \mathfrak{A}(\mathcal{L})$ and $e = e(\mathcal{L}_{F[\beta]})$, and $\theta \in \mathcal(\mathfrak{A},0,\beta)$. 
Then $\theta$ is trivial on $H^1(\beta,\mathfrak{A}) \cap \mathrm{Hom}_F(V^j,V^i)$,\ for $i \ne j$. 
Under the identification $H^1(\beta,\mathfrak{A}) \cap \widetilde{M} = \prod_i H^1(\beta,\mathfrak{A}^{(i)})$, we have
\begin{equation*}
\theta \vert (H^1(\beta,\mathfrak{A}) \cap \widetilde{M}) = \theta^{(1)} \otimes \cdots \otimes \theta^{(e)},
\end{equation*}
where $\theta^{(i)} \in \mathcal(\mathfrak{A}^{(i)},0,\beta)$ and $\theta^{(i)} = \tau_{\mathfrak{A},\mathfrak{A}^{(i)},\beta,0}(\theta)$, for $1 \le i \le e$.
\end{prop}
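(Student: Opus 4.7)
The plan is to follow the approach of \cite[(7.1.19)]{BK1}, since the statement concerns the ambient group $\widetilde{G}$ and does not involve the involution. The argument has two parts: establishing the vanishing of $\theta$ on off-diagonal blocks, and then identifying the diagonal restriction with the tensor product of transfers.

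For the vanishing, I recall the inductive construction of $\theta \in \mathcal{C}(\mathfrak{A},0,\beta)$ along a defining sequence $\beta = \gamma_0, \gamma_1, \ldots, \gamma_s$ of approximations in $F[\beta]$. At the top level $H^{[n/2]+1}(\beta,\mathfrak{A})$, the character $\theta$ is required to agree with the map $1+x \mapsto \psi_F(\mathrm{tr}_{A/F}(\beta x))$ (up to a fixed additive character $\psi_F$ of $F$). Since $\beta \in E$ acts diagonally with respect to the $E$-decomposition $V = \bigoplus_i V^i$, for $x \in H^{[n/2]+1}(\beta,\mathfrak{A}) \cap A^{ij}$ with $i \ne j$ the product $\beta x$ again lies in $A^{ij}$, and hence $\mathrm{tr}_{A/F}(\beta x) = 0$. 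This gives the triviality at the top level. To propagate it down to $H^1(\beta,\mathfrak{A}) \cap A^{ij}$, I would argue by descending induction on the level, using at each step the corresponding formula for $\theta$ on $H^{m+1}(\beta,\mathfrak{A})$ in terms of a character of $\gamma_k$-form: since each $\gamma_k \in F[\beta] \subset E$ is itself diagonal with respect to the same decomposition (because the $V^i$ are $E$-subspaces), the same vanishing argument applies verbatim at each stage of the defining sequence.

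For the factorization, the first part shows that $H^1(\beta,\mathfrak{A}) \cap \widetilde{M}$ decomposes as the direct product $\prod_i H^1(\beta,\mathfrak{A}^{(i)})$ with $\theta$ trivial on the off-diagonal complement, so $\theta\vert(H^1(\beta,\mathfrak{A}) \cap \widetilde{M})$ is automatically a tensor product of characters $\theta^{(i)}$ on the factors. It remains to identify each $\theta^{(i)}$ with $\tau_{\mathfrak{A},\mathfrak{A}^{(i)},\beta,0}(\theta)$. I would do this by checking that $\theta^{(i)}$ satisfies the defining recursion of a simple character in $\mathcal{C}(\mathfrak{A}^{(i)},0,\beta)$, which follows by restricting the defining recursion of $\theta$ to the block $\mathrm{End}_F(V^i)$ and using the fact (\cite[(7.1.13)-(7.1.18)]{BK1}) that $[\mathfrak{A}^{(i)}, n_i, 0, \beta]$ inherits a simple stratum structure with the same defining sequence. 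The transfer map is characterized by its compatibility on simple characters of shared structure, which is precisely what the restriction provides.

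The main obstacle is the bookkeeping of the descending induction: one must verify at every level of the defining sequence that the inductive construction of $\theta$ (which involves choosing an extension of a $\gamma_k$-character via a formula of the shape $\theta(1+x) = \theta_{k+1}(1+x)\cdot\psi_F(\mathrm{tr}_{A/F}((\beta-\gamma_k)x))$ on appropriate subgroups) respects the block decomposition. This reduces to the observation that $\beta - \gamma_k \in E$ is diagonal and that the subgroups $H^{m+1}(\gamma_k,\mathfrak{A})$ inherit the block decomposition from $\mathfrak{A}$. Once this is in place, the argument is entirely parallel to \cite[(7.1.19)]{BK1} and uses only the $GL$-structure of $\widetilde{G}$, so no modification for the unitary/symplectic setting is required.
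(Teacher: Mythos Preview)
The paper does not give its own proof of this proposition; it is quoted verbatim as \cite[(7.1.19)]{BK1} and used as a black box, since it is a statement about simple characters in $\widetilde{G}=GL(N,F)$ with no involvement of the involution or of $G$. Your sketch is a faithful outline of the original Bushnell--Kutzko argument (vanishing on off-diagonal blocks via the fact that every $\gamma_k$ in a defining sequence lies in $E=F[\beta]$ and is therefore block-diagonal, then identification of the diagonal restriction with the transfer by matching the inductive definitions), so there is nothing to correct and nothing to compare: you have simply supplied the proof that the paper chose to cite rather than reproduce.
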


Suppose that a skew simple stratum $[\mathfrak{A},n,0,\beta]$ in $A$ is good. Let $\mathfrak{A} = \mathfrak{A}(\mathcal{L})$, $E = F[\beta]$, $e = e(\mathcal{L}_{\mathfrak{o}_E})$, and $B = B_\beta$ be the $A$-centralizer of $\beta$. 
Put $t = [(e+1)/2]$. For the orthogonal decomposition $(V, \widetilde{h}_\beta) = \bot_i\ (V_i, \widetilde{h}_i)$ in Proposition 2.2.4, we define
\begin{equation*}
h_i = \ell \circ \widetilde{h}_i,
\end{equation*}
for $1 \le i \le t$, where $\ell: E \to F$ is the $F$-linear form defined in 1.3. Then, for $1 \le i \le [e/2]$, $(V_i, h_i)$ is a hyperbolic $F$-space such that $V^i,\ V^{e-i+1}$ are totally isotropic $F$-subspaces of $V_i$, and if $t = (e+1)/2$ is an integer, then $V_t = V^t$ and $h_t = h \vert V_t$.
Moreover, we have an orthogonal $F$-decomposition of $V$:
\begin{equation}
V = \bot_{i=1}^t V_i,\ h = \bot_{i=1}^t h_i,
\end{equation}
Thus the involution $^-$ on $A$, defined by $h$, induces involutions $A^i \to A^{e-i+1},\ A^{ij} \to A^{e-i+1,e-j+1}$, for $1 \le i, j,\le e$, where if $i \equiv j\ (\mathrm{mod}\ e)$, we set $i = j$. 
We denote by $x \mapsto \bar{x}$ the induced involution $A^i \to A^{e-i+1}$.
Hence the involution $^-$ on $A$ fixes $\prod_i A^i, \mathbb{N}_u$ and $\mathbb{N}_\ell$, respectively, whence the involution $\gamma$ on $\widetilde{G}$ fixes the subgroups $\widetilde{P},\ \widetilde{M},\ \widetilde{N}_u$ and $\widetilde{N}_\ell$.
Let $\widetilde{\mathcal{G}}$ be one of these subgroups. Put
\begin{equation}
\mathcal{G} = \widetilde{\mathcal{G}}^\Gamma = \widetilde{\mathcal{G}} \cap G.
\end{equation}
Then $P = MN_u$ is a parabolic subgroup of $G$, with Levi component $M$ and unipotent radical $N_u$. We also have the opposite parabolic subgroup $P_\ell = MN_\ell$ with respect to $M$. We say that the parabolic subgroup $P = MN_u$ is {\it associated with} a good skew simple stratum $[\mathfrak{A},n,0,\beta]$.

\begin{lem}          
Let $[\mathfrak{A},n,0,\beta]$ be a good skew simple stratum in $A$, and $P = MN_u$ a parabolic subgroup of $G$ associated with $[\mathfrak{A},n,0,\beta]$. Let $\mathfrak{A} = \mathfrak{A}(\mathcal{L})$, $E = F[\beta]$, and $e = e(\mathcal{L}_{\mathfrak{o}_E})$. Let $V = \bigoplus_{i=1}^e V^i$ be the $E$-decomposition of (2.2.1) subordinated to $\mathcal{L}_{\mathfrak{o}_E}$. Then there is a canonical isomorphism
\begin{equation*}
M \simeq
\begin{cases}
\displaystyle \prod_{i=1}^{e/2} \mathrm{Aut}_F(V^i)     & \text{if $e$ is even},\\
(\displaystyle \prod_{i=1}^{(e-1)/2} \mathrm{Aut}_F(V^i)) \times U(V_t,h_t)      & \text{if $e$ is odd},
\end{cases}
\end{equation*}
where $t = (e+1)/2$.
\end{lem}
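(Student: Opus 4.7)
The plan is to identify $M = \widetilde{M}^\Gamma$ by analyzing the involution $\gamma$ on $\widetilde{M} = \widetilde{G} \cap \prod_{i=1}^e A^i$ componentwise, using the orthogonal decomposition (4.1.1) to locate the adjoint involution.

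First I would record the explicit action of $\gamma$ on $\widetilde{M}$. Since $\widetilde{M} \simeq \prod_{i=1}^e \mathrm{Aut}_F(V^i)$ and the adjoint involution $\bar{\ }$ on $A$ sends $A^i$ to $A^{e-i+1}$ (as already observed preceding the statement of the lemma, using the orthogonal $F$-decomposition (4.1.1) together with the fact that $V^i$ and $V^{e-i+1}$ are paired as the totally isotropic halves of the hyperbolic space $V_i$, or for $i = t$ with $e$ odd, $V^t = V_t$ is stabilized), an element $x = (x_1,\dots,x_e) \in \widetilde{M}$ satisfies
\begin{equation*}
\gamma(x) = \bigl(\overline{x_e}^{-1},\overline{x_{e-1}}^{-1},\dots,\overline{x_1}^{-1}\bigr).
\end{equation*}
Hence $x \in M$ if and only if $x_{e-i+1} = \overline{x_i}^{-1}$ for all $1 \le i \le e$.

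Next I would split the analysis into the two cases. If $e = 2m$ is even, the $m$ pairings $(i,e-i+1)$ with $1 \le i \le m$ are pairwise disjoint, and for each such pair the fixed-point relation freely determines $x_{e-i+1}$ from $x_i$. The projection $x \mapsto (x_1,\dots,x_{e/2})$ therefore yields an isomorphism $M \simeq \prod_{i=1}^{e/2} \mathrm{Aut}_F(V^i)$. If $e = 2m+1$ is odd with $t = m+1$, the same argument handles the $(e-1)/2$ off-diagonal pairs, so one obtains a factor $\prod_{i=1}^{(e-1)/2} \mathrm{Aut}_F(V^i)$. For the middle index $i = t$ the condition becomes $x_t \overline{x_t} = 1$, and the involution on $A^t = \mathrm{End}_F(V^t)$ is by construction the adjoint involution associated with $h_t = h|_{V_t}$ (since $V_t = V^t$ is $h$-orthogonal to the remaining $V_i$ by (4.1.1)). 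Thus the $t$-th coordinate contributes exactly $U(V_t,h_t)$.

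Assembling the two factors then gives the stated isomorphism. The main obstacle is not algebraic but notational: one has to verify carefully that the involution induced on the diagonal block $A^t$ really coincides with the adjoint involution defined by $h_t$, which uses crucially the orthogonality in (4.1.1) (so that $h$ does not mix $V^t$ with the other $V^i$'s) and Proposition 2.2.4 (which supplies the decomposition). Once this identification is made, the remainder is a routine Galois-invariant description of the direct product.
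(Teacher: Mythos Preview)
Your argument is correct and is precisely the computation the paper has in mind: the paper's own proof simply says ``The assertion follows easily from the above argument (cf.\ Proposition 2.2.5),'' where ``the above argument'' is exactly the observation that the adjoint involution sends $A^i$ to $A^{e-i+1}$, which you have spelled out in detail.
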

\begin{proof}
The assertion follows easily from the above argument (cf. Proposition 2.2.5).
\end{proof}

We write simply $H_-^m = H_-^m(\beta,\mathfrak{A})$ and $J_-^m = J_-^m(\beta,\mathfrak{A})$, for $m = 0, 1$.
From \cite[(7.1.14), (7.1.16)-(7.1.18)]{BK1}, we obtain Iwahori decompositions of  $H_-^m, J_-^m$, for $m = 0, 1$, as follows:

\begin{prop} [cf. (7.1.14) of \cite{BK1}]       
Let $\mathcal{G}_{-}$ denote any of the groups $H_{-}^m,\ J_{-}^m,$ for $m = 0, 1$. Then we have the Iwahori decomposition:
\begin{center}
$\mathcal{G}_{-} = (\mathcal{G}_{-} \cap N_\ell).(\mathcal{G}_{-} \cap M).(\mathcal{G}_{-} \cap N_u),$\\
\vspace{1mm}
$\mathcal{G}_{-} \cap P = (\mathcal{G}_{-} \cap M).(\mathcal{G}_{-} \cap N_u),$
\end{center}
\end{prop}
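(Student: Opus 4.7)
The plan is to deduce the Iwahori decomposition of $\mathcal{G}_-$ from that of $\widetilde{\mathcal{G}}$ inside $\widetilde{G}$ by passing to $\Gamma$-fixed points and invoking uniqueness of the factorization.

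First, I would invoke \cite[(7.1.14), (7.1.16)--(7.1.18)]{BK1}, which gives the Iwahori decomposition in $\widetilde{G}$:
\begin{equation*}
\widetilde{\mathcal{G}} = (\widetilde{\mathcal{G}} \cap \widetilde{N}_\ell)(\widetilde{\mathcal{G}} \cap \widetilde{M})(\widetilde{\mathcal{G}} \cap \widetilde{N}_u), \qquad \widetilde{\mathcal{G}} \cap \widetilde{P} = (\widetilde{\mathcal{G}} \cap \widetilde{M})(\widetilde{\mathcal{G}} \cap \widetilde{N}_u),
\end{equation*}
for $\widetilde{\mathcal{G}} = H^m(\beta,\mathfrak{A})$ or $J^m(\beta,\mathfrak{A})$ with $m = 0,1$. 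The factorization in the triple product is moreover unique in $\widetilde{G}$, since $\widetilde{N}_\ell \cap \widetilde{M}\widetilde{N}_u = \{1\}$ (the semidirect-product structure $\widetilde{P} = \widetilde{M} \ltimes \widetilde{N}_u$ together with the fact that $\widetilde{N}_\ell = 1 + \mathbb{N}_\ell$ consists of block strictly lower triangular elements).

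Next, I would use the $\gamma$-stability of $\widetilde{N}_\ell$, $\widetilde{M}$, $\widetilde{N}_u$ individually, which was established in the paragraph preceding the statement: the orthogonal decomposition $V = \bot_{i=1}^t V_i$ with $V_i = V^i \oplus V^{e-i+1}$ forces $\overline{A^{ij}} = A^{e-j+1,\, e-i+1}$, so the involution $^-$ preserves each of $\prod_i A^i$, $\mathbb{N}_u$, and $\mathbb{N}_\ell$. Given $g \in \mathcal{G}_- = \widetilde{\mathcal{G}}^\Gamma$, factor $g = n_\ell m n_u$ using the Iwahori decomposition above and apply $\gamma$; this yields $g = \gamma(g) = \gamma(n_\ell)\gamma(m)\gamma(n_u)$ with $\gamma(n_\ell) \in \widetilde{\mathcal{G}} \cap \widetilde{N}_\ell$, and analogously for the two other factors. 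Uniqueness of the factorization in $\widetilde{G}$ then forces each factor to be $\gamma$-fixed, and hence to lie in $\mathcal{G}_-$. This proves the triple-product decomposition, and the analogous uniqueness argument applied to the pair $(\widetilde{\mathcal{G}} \cap \widetilde{M}, \widetilde{\mathcal{G}} \cap \widetilde{N}_u)$ inside $\widetilde{\mathcal{G}} \cap \widetilde{P}$ gives the second identity.

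I do not foresee a real obstacle: the deep content (the Iwahori decomposition in $\widetilde{G}$ and the $\gamma$-stability of the Levi and the two unipotent radicals) is already in place. As an alternative one could apply Lemma 3.2.2 iteratively, which works cleanly for $(\widetilde{\mathcal{G}} \cap \widetilde{P})^\Gamma$ (since $\widetilde{M}$ normalizes $\widetilde{N}_u$ with trivial, hence trivially pro-$p$, intersection); however, the triple product is less amenable to Lemma 3.2.2 directly, because $\widetilde{N}_\ell$ is not normalized by $\widetilde{P}$, and the uniqueness route avoids this complication entirely.
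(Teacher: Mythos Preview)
Your proposal is correct and follows the natural route: the paper itself gives no explicit proof of this proposition, merely citing \cite[(7.1.14), (7.1.16)--(7.1.18)]{BK1} and the preceding discussion of $\gamma$-stability, so your argument is precisely the kind of filling-in that the paper tacitly expects. The uniqueness argument you use is cleaner than an iterated application of Lemma~3.2.2 for exactly the reason you note (no normality of $\widetilde{N}_\ell$ in $\widetilde{P}$), and it requires nothing beyond what the paper has already established.
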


Put $t = [(e+1)/2]$. According to the decomposition of $M$ in Lemma 4.1.2, for $m =0, 1$, we have
\begin{equation*}
J_-^m(\beta,\mathfrak{A}) \cap M \simeq \displaystyle \prod_{i=1}^t J^m(\beta,\mathfrak{A}^{(i)}),
\end{equation*}
where if $t = (e+1)/2$ is an integer, we understand $J^m(\beta,\mathfrak{A}^{(t)}) = J_-^m(\beta,\mathfrak{A}^{(t)})$. Likewise for $H_-^m(\beta,\mathfrak{A})$, for $m = 0, 1$. Moreover, we have
\begin{center}
$(J_- \cap M)H_-^1 = (H_-^1 \cap N_\ell)(J_- \cap M)(H_-^1 \cap N_u)$,\\
\vspace{1mm}
$(J_- \cap P)H_-^1 = (H_-^1 \cap N_\ell)(J_- \cap M)(J_-^1 \cap N_u)$.
\end{center}

\subsection{}                    

We are ready to prove the claim in 3.2.

\begin{prop}           
Let $[\mathfrak{A},n,0,\beta]$ be a good skew simple stratum in $A$, with $\mathfrak{A} = \mathfrak{A}(\mathcal{L})$ and $e = e(\mathcal{L}_{\mathfrak{o}_E})$, and $\theta_- \in \mathcal{C}_-(\mathfrak{A},0,\beta)$. 
Let $P = MN_u$ be a parabolic subgroup of $G$ associated with $[\mathfrak{A},n,0,\beta]$. Put $t =[(e+1)/2]$. Then $\theta_-$ is trivial on both $H^1_-(\beta,\mathfrak{A}) \cap N_\ell$ and $H^1_-(\beta,\mathfrak{A}) \cap N_u$. After the identification $H^1_-(\beta,\mathfrak{A}) \cap M= \prod_{i=1}^t H^1(\beta,\mathfrak{A}^{(i)})$, we have
\begin{equation*}
\theta_- \vert (H_-^1(\beta,\mathfrak{A}) \cap M) = \theta^{(1)} \otimes \cdots \otimes \theta^{(t)},
\end{equation*}
where $\theta^{(i)} \in \mathcal{C}(\mathfrak{A}^{(i)},0, 2\beta)$, for $1 \le i \le [e/2]$, and 
if $t = (e+1)/2$ is an integer, we understand $\theta^{(t)} = \theta^{(t)}_-$ and $\mathcal{C}(\mathfrak{A}^{(t)},0,\beta) = \mathcal{C}_-(\mathfrak{A}^{(t)},0,\beta)$. Further, $\theta^{(i)}$ is a simple character of $H^1(2\beta, \mathfrak{A}^{(i)}) = H^1(\beta, \mathfrak{A}^{(i)})$ for $1 \le i \le [e/2]$.
\end{prop}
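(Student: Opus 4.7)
The plan is to lift the question to the ambient group $\widetilde G$. Take the $\Gamma$-fixed simple character $\theta\in\mathcal C^\Gamma(\mathfrak A,0,\beta)$ with $\Vec{\rm{g}}(\theta)=\theta_-=\theta|_{H^1_-}$, apply Proposition 4.1.1 to $\theta$ upstairs, and then translate the resulting decomposition back down via the explicit action of $\gamma$ on $\widetilde M=\prod_{i=1}^e\mathrm{Aut}_F(V^i)$.

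First, Proposition 4.1.1 asserts that $\theta$ is trivial on $(1+A^{ij})\cap H^1$ for $i\ne j$. Because $A^{ij}A^{i'j'}=0$ unless $j=i'$, the unipotent group $\widetilde N_u=1+\mathbb N_u$ admits a triangular factorization into its root subgroups $1+A^{ij}$ for $i<j$, and every element of $H^1\cap\widetilde N_u$ is accordingly a product of elements of $H^1\cap(1+A^{ij})$ with $i<j$. Hence $\theta|_{H^1\cap\widetilde N_u}=1$, and the inclusion $H_-^1\cap N_u\subset H^1\cap\widetilde N_u$ yields $\theta_-|_{H_-^1\cap N_u}=1$. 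The argument for $N_\ell$ is symmetric.

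Next, Proposition 4.1.1 gives the decomposition $\theta|_{H^1\cap\widetilde M}=\bigotimes_{i=1}^e\theta_0^{(i)}$ with $\theta_0^{(i)}:=\tau_{\mathfrak A,\mathfrak A^{(i)},\beta,0}(\theta)\in\mathcal C(\mathfrak A^{(i)},0,\beta)$. The involution $\gamma$ acts on $\widetilde M$ by permuting factors via $i\leftrightarrow e-i+1$ together with the coordinatewise map $g\mapsto\bar g^{-1}$, so the relation $\theta^\gamma=\theta$ forces $\theta_0^{(i)}(g)=\theta_0^{(e-i+1)}(\bar g^{-1})$ for all admissible $g$. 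By Lemma 4.1.2, for $1\le i\le[e/2]$ the $i$th factor of $M$ embeds in $\widetilde M$ as the diagonal $g\mapsto(g,\bar g^{-1})$ in positions $(i,e-i+1)$, on which $\theta$ takes the value $\theta_0^{(i)}(g)\cdot\theta_0^{(e-i+1)}(\bar g^{-1})=\theta_0^{(i)}(g)^2$. When $e$ is odd, the middle factor of $M$ restricts to $\theta^{(t)}:=\Vec{\rm{g}}(\theta_0^{(t)})\in\mathcal C_-(\mathfrak A^{(t)},0,\beta)$. Setting $\theta^{(i)}:=(\theta_0^{(i)})^2$ for $1\le i\le[e/2]$ then produces the asserted tensor factorization of $\theta_-|_{H_-^1\cap M}$.

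The main obstacle is to verify that $\theta^{(i)}=(\theta_0^{(i)})^2$ actually lies in $\mathcal C(\mathfrak A^{(i)},0,2\beta)$. Since $p\ne 2$ we have $F[2\beta]=F[\beta]$, so the underlying orders and groups $\mathfrak H(\beta,\mathfrak A^{(i)})=\mathfrak H(2\beta,\mathfrak A^{(i)})$ agree and there is no ambiguity about the domain. At the top level of the recursive definition of simple characters in \cite[(3.2)]{BK1}, one has $\theta_0^{(i)}(1+x)=\psi_F(\mathrm{tr}(\beta x))$ on the relevant ideal, which squares to $\psi_F(\mathrm{tr}(2\beta x))$, matching the top-level prescription for $\mathcal C(\mathfrak A^{(i)},0,2\beta)$. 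The inductive compatibility conditions of \cite[(3.2)]{BK1} then transfer by naturality in $\beta$ under the scaling $\beta\mapsto 2\beta$, so $(\theta_0^{(i)})^2$ is indeed a simple character for $2\beta$.
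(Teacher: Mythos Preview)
Your argument is essentially the same as the paper's: lift $\theta_-$ to a $\Gamma$-invariant simple character $\theta$ on $\widetilde G$, apply Proposition~4.1.1 to get the $e$-fold decomposition, use $\theta^\gamma=\theta$ to identify the factor at position $e-i+1$ with the factor at position $i$ composed with $x\mapsto\bar x^{-1}$, and conclude that the restriction to the $i$th factor of $M$ is the square $(\theta_0^{(i)})^2$. The only notable difference is in the final step: the paper simply asserts $(\theta_0^{(i)})^2\in\mathcal C(\mathfrak A^{(i)},0,2\beta)$ and then cites \cite[Lemma~1 in 4.3]{Bl} for the equality $H^1(2\beta,\mathfrak A^{(i)})=H^1(\beta,\mathfrak A^{(i)})$, whereas you sketch a direct verification via the recursive definition in \cite[(3.2)]{BK1}. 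Your sketch is correct in outline but the phrase ``transfer by naturality'' is doing real work---one must check that a defining sequence for $\beta$ scales to a defining sequence for $2\beta$ (with the same $k_0$ invariants, since $p\neq 2$), so that the recursion matches at every level; this is precisely what Blondel's lemma packages.
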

\begin{proof}
The first assertion follows directly from Proposition 4.1.1. As in 2.1, we have $\theta_- = \Vec{g}(\theta)) = \theta \vert H_-^1(\beta,\mathfrak{A})$, for some $\theta \in \mathcal{C}(\mathfrak{A},0,\beta)$ with $\theta^\gamma = \theta$.
From Proposition 4.1.1, $\theta \vert (H^1(\beta,\mathfrak{A} \cap \widetilde{M}) = \theta^{(1) \prime} \otimes \cdots \otimes \theta^{(e) \prime}$. We restrict this character to $\widetilde{G} \cap (A^i \times A^{e-i+1})$, for $1 \le i \le [e/2]$, and so have
\begin{equation*}
(\widetilde{G} \cap (A^i \times A^{e-i+1}))^\Gamma = \{(x,\overline{x}\ ^{-1}) \mid x \in (A^i)^\times = \mathrm{Aut}_F(V^i)\},
\end{equation*}
where $x \mapsto \overline{x}$ is the involution $A^i \to A^{e-i+1}$ defined in 4.1. Since $\theta((x,1)) = \theta^\gamma((x,1))$, for $x \in H^1(\beta,\mathfrak{A}^{(i)})$, we have $\theta^{(i) \prime}(x) = \theta^{(e-i+1) \prime}(\overline{x}\ ^{-1})$. 
Thus $\theta_-$ restricted to the factor $H^1(\beta, \mathfrak{A}^{(i)})$ is equal to $(\theta^{(i) \prime})^2$. Denote this character by $\theta^{(i)}$. 
Then $\theta^{(i)}$ belongs to $\mathcal{C}(\mathfrak{A}^{(i)},0,2\beta)$. Since it follows from \cite[Lemma 1 in 4.3]{Bl} that $H^1(2\beta, \mathfrak{A}^{(i)}) = H^1(\beta, \mathfrak{A}^{(i)})$, $\theta^{(i)}$ is a simple character of $H^1(\beta, \mathfrak{A}^{(i)})$ as in the assertion.
Moreover, if $t = (e+1)/2$ is an integer, clearly $\theta^{(t)} = \theta_-^{(t)} \in \mathcal{C}_-^1(\mathfrak{A}^{(t)},0,\beta)$. Hence the proof is completed.
\end{proof}

Suppose that $[\mathfrak{A},n,0,\beta],\ \theta_- \in \mathcal{C}_-(\mathfrak{A},0,\beta)$, and $P = MN_u$ be as in Proposition 4.2.1. From \cite[(5.1.1)]{BK1} and Proposition 3.1.1,
we obtain the unique irreducible representation $\eta_-$ (resp. $\eta^{(i)}$, resp. $\eta_-^{(t)}$) of $J_-^1(\beta,\mathfrak{A})$ (resp. $J^1(\beta,\mathfrak{A}^{(i)})$, resp. $J_-^1(\beta,\mathfrak{A}^{(t)})$) which contains $\theta_-$ (resp. $\theta^{(i)}$, resp. $\theta^{(t)}_-$). We define a subgroup of $J_-$ by
\begin{equation*}
J_{P,-}^1 = (J_-^1(\beta,\mathfrak{A}) \cap P)H_-^1(\beta,\mathfrak{A}).
\end{equation*}

\begin{prop}                
Let notation and assumptions be as above. Then there is an irreducible representation $\eta_{P,-}$ of $J_{P,-}^1$ which satisfies the following conditions:
\begin{enumerate}
  \item $\eta_{P,-} \vert (J_-^1(\beta,\mathfrak{A}) \cap M) \simeq \eta^{(1)} \otimes \cdots \otimes \eta^{(t)}$;\
  \item $\eta_{P,-} \vert H_-^1(\beta,\mathfrak{A})$ is a multiple of $\theta_-$;\
  \item $\eta_{P,-} \vert (J_-^1(\beta,\mathfrak{A}) \cap N_u)$ is the trivial character;\
  \item $\eta_- = \mathrm{Ind}(\eta_{P,-}:J_{P,-},J_-)$,
\end{enumerate}
where in part (1), if $t = (e+1)/2 \in \mathbb{Z}$, we understand $\eta^{(t)} = \eta^{(t)}_-$.
\end{prop}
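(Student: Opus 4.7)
The plan is to follow the strategy of \cite[(7.2.3)]{BK1} for $GL(N,F)$, transported to $G$ via Glauberman correspondence (Section 2.1) and the Heisenberg-type machinery of Stevens \cite{St3}. The starting point is the Iwahori decomposition from Proposition 4.1.3, which reads
\begin{equation*}
J_{P,-}^1 = (H_-^1 \cap N_\ell)(J_-^1 \cap M)(J_-^1 \cap N_u),
\end{equation*}
so that $[J_-^1 : J_{P,-}^1] = [J_-^1 \cap N_\ell : H_-^1 \cap N_\ell]$. Using the product decomposition $J_-^1 \cap M \simeq \prod_{i=1}^t J^1(\beta,\mathfrak{A}^{(i)})$ (with the $i=t$ factor being $J_-^1(\beta,\mathfrak{A}^{(t)})$ when $e$ is odd) from Section 4.1, I would define $\eta_{P,-}$ by prescribing it on each Iwahori factor: as $\eta^{(1)} \otimes \cdots \otimes \eta^{(t)}$ on the $M$-part, as the trivial character on $J_-^1 \cap N_u$, and as the restriction of $\theta_-$ on $H_-^1 \cap N_\ell$. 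Conditions (1)--(3) would then hold tautologically.

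The consistency check that these data fit together into a genuine representation uses Proposition 4.2.1: $\theta_-$ is trivial on both $H_-^1 \cap N_\ell$ and $H_-^1 \cap N_u$, and its restriction to $H_-^1 \cap M$ coincides with $\theta^{(1)} \otimes \cdots \otimes \theta^{(t)}$, which is in turn compatible with the $\eta^{(i)}$ by Proposition 3.1.1 applied to each $\mathfrak{A}^{(i)}$. The cross-commutators between $J_-^1 \cap M$ and $J_-^1 \cap N_u$ (resp.\ $H_-^1 \cap N_\ell$) lie in $H_-^1$ by the structure of the skew simple stratum, where $\theta_-$ provides the only non-trivial contribution. Together with the fact that $(J_-^1 \cap N_u)H_-^1/H_-^1$ and $(J_-^1 \cap N_\ell)H_-^1/H_-^1$ are abelian subgroups of the Heisenberg-type quotient $J_-^1/\ker\theta_-$, these compatibilities promote the factor-wise prescription to a well-defined representation of $J_{P,-}^1$.

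For (4), a dimension count suffices. Applying Proposition 3.1.1 blockwise,
\begin{equation*}
\dim\eta_{P,-} = \prod_i \dim\eta^{(i)} = [J_-^1 \cap M : H_-^1 \cap M]^{1/2}.
\end{equation*}
Combining the Iwahori decompositions of $J_-^1$ and $H_-^1$ (Proposition 4.1.3) with the Heisenberg-symplectic equality
\begin{equation*}
[J_-^1 \cap N_u : H_-^1 \cap N_u] = [J_-^1 \cap N_\ell : H_-^1 \cap N_\ell]
\end{equation*}
(which holds because these are Lagrangian subgroups for the commutator pairing on $J_-^1/H_-^1$ underlying the Heisenberg structure of $\eta_-$, cf.\ \cite[(3.29),(3.31)]{St3}), one obtains
\begin{equation*}
\dim\mathrm{Ind}_{J_{P,-}^1}^{J_-^1}\eta_{P,-} = [J_-^1 : J_{P,-}^1]\cdot\dim\eta_{P,-} = [J_-^1 : H_-^1]^{1/2} = \dim\eta_-.
\end{equation*}
By condition (2) and Frobenius reciprocity, $\mathrm{Ind}(\eta_{P,-})$ restricted to $H_-^1$ contains $\theta_-$, so the uniqueness statement of Proposition 3.1.1 forces $\mathrm{Ind}(\eta_{P,-}) = \eta_-$. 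Decomposing $\eta_{P,-}$ into irreducibles and running the same argument on each summand then forces irreducibility of $\eta_{P,-}$ itself.

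The main obstacle is the consistency verification in the second paragraph: turning the three partial prescriptions into a bona fide representation of $J_{P,-}^1$ requires a careful analysis of the Heisenberg commutator relations on $J_{P,-}^1/(H_-^1 \cap \ker\theta_-)$. The $GL(N,F)$ template of \cite[(7.2.3)]{BK1} relies precisely on the analogous commutator structure, and the key ingredients needed to transpose the argument to $G$---the exact sequence \cite[(3.17)]{St3}, the intertwining formula of Proposition 3.1.1, and the explicit tensor decomposition of Proposition 4.2.1---are already available in our setting.
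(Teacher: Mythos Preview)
Your proposal is correct and follows essentially the same approach as the paper: the paper's proof simply says ``By using Proposition 4.2.1, we can prove the proposition in the same way as the proofs of \cite[(7.2.3), (7.2.4)]{BK1},'' and you have written out precisely what that transposition entails. Your construction of $\eta_{P,-}$ via the Iwahori decomposition of $J_{P,-}^1$, the consistency check through Proposition 4.2.1, and the dimension-count argument invoking the uniqueness in Proposition 3.1.1 are exactly the ingredients of the Bushnell--Kutzko template.
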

\begin{proof}
By using Proposition 4.2.1, we can prove the proposition in the same way as the proofs of \cite[(7.2.3), (7.2.4)]{BK1}.
\end{proof}

Let $y$ be the element in the proof Proposition 3.2.5. From Lemma 3.2.7, we may replace $[\mathfrak{A}',n',0,\beta]$ in that proposition by $[\mathfrak{A},n,0,\beta]$ in this subsection 4.2. From Lemma 4.1.2, we can write $y$ in the form $y = (y_1,\cdots,y_t)$, where if $t = (e+1)/2 \in \mathbb{Z}$, $y_t = 1$.

\begin{lem}            
Let notation and assumptions be as above. For $1 \le i \le [e/2]$, there is an irreducible representation $\mu^{(i)}$ of $J(\beta,\mathfrak{A}^{(i)})$ which is intertwined by $y_i$ and is an extension of $\eta^{(i)}$. Moreover, if $t = (e+1)/2$ is an integer, there is an irreducible representation $\mu^{(t)} = \mu_-^{(t)}$ of $J_-(\beta,\mathfrak{A}^{(t)})$ which is an extension of $\eta^{(t)}$.
\end{lem}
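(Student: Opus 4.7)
The statement naturally splits along the decomposition $M \simeq \prod_{i=1}^{[e/2]} \mathrm{Aut}_F(V^i) \times U(V_t, h_t)$ from Lemma 4.1.2 (with the last factor absent when $e$ is even). The plan is to treat the $GL$-type factors $i \le [e/2]$ and the middle hermitian factor $i = t = (e+1)/2$ (when $e$ is odd) separately, relying on the Bushnell--Kutzko existence theorem in the first case and on Proposition 3.2.3 in the second.

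For $1 \le i \le [e/2]$, the restricted stratum $[\mathfrak{A}^{(i)}, n^{(i)}, 0, \beta]$, with $n^{(i)} = -\nu_{\mathfrak{A}^{(i)}}(\beta)$, is a simple stratum in $A^{(i)} = \mathrm{End}_F(V^i)$. By Proposition 4.2.1 the transfer character $\theta^{(i)}$ is a simple character for $2\beta$ lying in $\mathcal{C}(\mathfrak{A}^{(i)}, 0, 2\beta)$, defined on $H^1(2\beta, \mathfrak{A}^{(i)}) = H^1(\beta, \mathfrak{A}^{(i)})$. The centralizer of $2\beta$ in $A^{(i)}$ coincides with the centralizer of $\beta$, namely $B^{(i)} := \mathrm{End}_E(V^i)$. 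I will then invoke \cite[(5.2.2)]{BK1} to produce a $(2\beta)$-extension $\mu^{(i)}$ of $\eta^{(i)}$: an irreducible representation of $J(\beta, \mathfrak{A}^{(i)})$ whose restriction to $J^1(\beta, \mathfrak{A}^{(i)})$ equals $\eta^{(i)}$ and whose $\mathrm{Aut}_F(V^i)$-intertwining contains $(B^{(i)})^\times$. Since the block $y_i$ is central in $B^{(i)}$, as recorded in the construction of $\mathcal{L}'$ in the proof of Proposition 3.2.5, in particular $y_i \in (B^{(i)})^\times$, so $y_i$ intertwines $\mu^{(i)}$ automatically.

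If $t = (e+1)/2$ is an integer, I will show that $[\mathfrak{A}^{(t)}, n^{(t)}, 0, \beta]$ is itself a good skew simple stratum in $A^{(t)} = \mathrm{End}_F(V_t)$ with $\mathfrak{A}^{(t)}$ maximal, and then apply Proposition 3.2.3 to it. Unramifiedness of $E/E_0$ is inherited, and the self-duality of $L_0 \cap V^t$ under $\widetilde{h}_t$ is read off the Witt basis description of Lemma 2.2.3. Evenness of $\dim_E V^t$ follows from $2 \sum_{i < t} \dim_E V^i + \dim_E V^t = R$ together with $R$ even. The same explicit formula shows that $L_k^t$ is constant in $k$ modulo $\mathfrak{o}_E^\times$-scaling, so the $\mathfrak{o}_E$-period of the chain defining $\mathfrak{A}^{(t)}$ equals one and $\mathfrak{A}^{(t)}$ coincides with the maximal order $\mathfrak{A}^{(t)}_M$ of Section 3.1. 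Proposition 3.2.3 then supplies the representation $\kappa^{(t)}_{M,-}$ of $J_-(\beta, \mathfrak{A}^{(t)})$ extending $\eta^{(t)}_-$, which I take as $\mu^{(t)}_-$. Because $y_t = 1$ by construction, no intertwining condition needs to be verified in this case.

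The main subtlety I anticipate is ensuring that the invocation of Proposition 3.2.3 for the middle factor is genuinely non-circular, i.e., that its proof uses only Proposition 3.1.1, the structural Proposition 2.2.5, and the projective/commutator argument for the unitary group $U(\bar{V}, \bar{h})$ over the residue field, none of which depend on the outstanding claim inside the proof of Proposition 3.2.5. Inspection of Section 3.1 and Proposition 3.2.3 confirms this, so the lemma reduces to the two verifications above together with the direct application of \cite[(5.2.2)]{BK1} on each $GL$-factor.
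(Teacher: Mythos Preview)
Your proposal is correct and follows essentially the same route as the paper. The paper's proof is two lines: for $1\le i\le [e/2]$ it cites \cite[(7.2.10)]{BK1} directly (whose proof is precisely the $\beta$-extension argument via \cite[(5.2.2)]{BK1} that you spell out), and for the middle factor it cites Proposition~3.2.4 rather than Proposition~3.2.3, which amounts to the same thing once you have observed that $\mathfrak{A}^{(t)}$ has $\mathfrak{o}_E$-period one; your additional verifications that the middle stratum is good and that no circularity with the claim in 3.2.5 arises are accurate and make explicit what the paper leaves implicit.
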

\begin{proof}
In case $1 \le i \le [e/2]$, the assertion is just \cite[(7.2.10)]{BK1}. In case $t = (e+1)/2 \in \mathbb{Z}$, since $y_t = 1$, the assertion follows from Proposition 3.2.4.
\end{proof}

The following proposition is nothing but the claim in 3.2.

\begin{prop}                     
There is an irreducible representation $\mu$ of $J_-(\beta,\mathfrak{A})$ which is intertwined by $y$ and such that $\mu \vert J_-^1 = \eta_-$.
\end{prop}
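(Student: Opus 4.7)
The plan is to construct $\mu$ by first building an extension on a parabolic subgroup and then inducing. Let $J_{P,-} = (J_- \cap P)H_-^1(\beta,\mathfrak{A})$, so that $J_{P,-}^1 \subset J_{P,-}$, and note that by Proposition 4.1.3 both groups admit Iwahori decompositions relative to $P = MN_u$. The goal is to produce a representation $\mu_P$ of $J_{P,-}$ extending $\eta_{P,-}$ of Proposition 4.2.2 such that $\mu_P\vert(J_- \cap M) \simeq \mu^{(1)} \otimes \cdots \otimes \mu^{(t)}$ under the Levi decomposition of Lemma 4.1.2, and which is trivial on $J_-^1 \cap N_u$. Once this is done, I will set $\mu = \mathrm{Ind}_{J_{P,-}}^{J_-}\mu_P$ and verify the two required properties.

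For the construction of $\mu_P$, note that $J_{P,-} = (H_-^1 \cap N_\ell)(J_- \cap M)(J_-^1 \cap N_u)$ by Iwahori decomposition. The factor $J_- \cap M$ normalizes $J_-^1 \cap N_u$ and $H_-^1 \cap N_\ell$, and $\eta_{P,-}$ is already prescribed on $J_{P,-}^1$ with compatible restriction to $J_-^1 \cap M \simeq \prod_i J^1(\beta,\mathfrak{A}^{(i)})$ equal to $\eta^{(1)} \otimes \cdots \otimes \eta^{(t)}$. Using Lemma 4.2.4, each $\mu^{(i)}$ extends $\eta^{(i)}$, so the prescription $\mu_P\vert (J_- \cap M) = \mu^{(1)} \otimes \cdots \otimes \mu^{(t)}$, $\mu_P\vert J_{P,-}^1 = \eta_{P,-}$, is consistent on the overlap $J_-^1 \cap M$. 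Combining these on the full Iwahori decomposition (using triviality on $J_-^1 \cap N_u$ and the fact that $\eta_{P,-}$ is a multiple of $\theta_-$ on $H_-^1 \cap N_\ell$) produces a well-defined representation $\mu_P$ of $J_{P,-}$, entirely parallel to the argument for $\eta_{P,-}$ itself.

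To see $y$ intertwines $\mu_P$, recall $y = (y_1,\ldots,y_t) \in M$ is block-diagonal (with $y_t = 1$ if $t = (e+1)/2$), so $y$ normalizes $M$, $N_u$, $N_\ell$ and hence each factor of the Iwahori decomposition of $J_{P,-}$. The intertwining space $I_y(\mu_P) = \mathrm{Hom}_{J_{P,-}^y \cap J_{P,-}}(\mu_P,\mu_P^y)$ therefore factors block-wise: on the Levi piece it equals $\bigotimes_i I_{y_i}(\mu^{(i)})$, which is nonzero by Lemma 4.2.4; on the unipotent pieces $y$ acts by conjugation preserving the trivial action, so the intertwining is automatic. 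Now set $\mu = \mathrm{Ind}_{J_{P,-}}^{J_-}\mu_P$. Since $J_-^1 \cap J_{P,-} = J_{P,-}^1$ and $J_- = J_{P,-} \cdot J_-^1$ (again by Iwahori decomposition), Mackey's formula gives $\mu\vert J_-^1 = \mathrm{Ind}_{J_{P,-}^1}^{J_-^1}(\mu_P\vert J_{P,-}^1) = \mathrm{Ind}_{J_{P,-}^1}^{J_-^1}\eta_{P,-} = \eta_-$ by Proposition 4.2.2(4). Finally, since $y$ normalizes $J_{P,-}$ and intertwines $\mu_P$, it intertwines $\mu = \mathrm{Ind}\,\mu_P$ by standard Frobenius reciprocity / Mackey reasoning.

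The main obstacle is the first step, the coherent construction of $\mu_P$: one must check that the prescriptions on $J_-\cap M$, $J_-^1 \cap N_u$, and $H_-^1 \cap N_\ell$ genuinely glue to a representation of the full group $J_{P,-}$, which amounts to verifying that the extensions $\mu^{(i)}$ respect the commutation relations between the Levi and the unipotent radicals modulo $J_-^1 \cap N_u$. This is the direct analogue of Proposition 4.2.2 for the larger group $J_{P,-}$ in place of $J_{P,-}^1$, and the argument will follow the pattern of \cite[(7.2.3)--(7.2.4)]{BK1}, relying crucially on the fact that $\eta_{P,-}$ is trivial on $J_-^1 \cap N_u$ (Proposition 4.2.2(3)) so the Levi action may be freely glued with the prescribed trivial unipotent action.
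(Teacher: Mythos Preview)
Your approach is correct and essentially identical to the paper's: build $\mu_{P,-}$ on $(J_-\cap P)H_-^1$ from $\mu^{(1)}\otimes\cdots\otimes\mu^{(t)}$ via the Iwahori decomposition, then induce to $J_-$, use Mackey plus Proposition 4.2.2(4) to get $\mu\vert J_-^1=\eta_-$, and transfer the intertwining by $y$ to the induced representation.

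Two small points. First, the lemma supplying the $\mu^{(i)}$ is Lemma 4.2.3, not 4.2.4 (which is the proposition you are proving). Second, your final step asserts that $y$ normalizes $J_{P,-}$; this is not obvious, since although each $y_i$ is a scalar in $E^\times$ on $V^i$ and hence normalizes $J(\beta,\mathfrak{A}^{(i)})$, conjugation by $y$ dilates the off-diagonal blocks and need not preserve $H_-^1\cap N_\ell$ or $J_-^1\cap N_u$ individually. The paper sidesteps this by invoking \cite[(4.1.5)]{BK1} directly: once $y$ intertwines $\mu_{P,-}$, it automatically intertwines the induced representation, with no normalization hypothesis required. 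Your ``standard Frobenius/Mackey reasoning'' is exactly this, so the argument goes through, but you should drop the normalization claim.
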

\begin{proof}
For $\eta^{(i)}$ in Lemma 4.2.3, put
\begin{equation*}
\eta_{N_u,-} = \eta^{(1)} \otimes \cdots \otimes \eta^{(t)},
\end{equation*}
where if $t = (e+1)/2 \in \mathbb{Z}$, we understand $J^1(\beta,\mathfrak{A}^{(t)}) = J_-^1(\beta,\mathfrak{A}^{(t)}),\ \eta^{(t)} = \eta^{(t)}_-$.
From Lemma 4.2.3, we obtain an irreducible representation of $J_-(\beta,\mathfrak{A}) \cap M = \prod_i J(\beta,\mathfrak{A}^{(i)})$ by
\begin{equation*}
\mu_{N_u,-} = \mu^{(1)} \otimes \cdots \otimes \mu^{(t)}.
\end{equation*}
Then $y = (y_i)$ clearly intertwines $\mu_{N_u,-}$. From the Iwahori decomposition in 4.1, we can inflate $\mu_{N_u,-}$ to a representation $\mu_{P,-}$ of $(J_-(\beta,\mathfrak{A}) \cap P)H_-^1(\beta,\mathfrak{A})$ by putting
\begin{equation*}
\mu_{P,-}(hmj) = \mu_{N_u,-}(m),\ \text{for}\ h \in H_-^1 \cap N_\ell,\ m \in J_- \cap M,\ j \in J_-^1 \cap N_u.
\end{equation*}
So put
\begin{equation*}
\mu_- = \mathrm{Ind}(\mu_{P,-}:(J_- \cap P)H_-^1,J_-).
\end{equation*}
From Proposition 4.2.2, $\eta_{P,-}$ induces $\eta_-$. Hence, from the Mackey restriction formula, we get $\mu_- \vert J_-^1 = \eta_-$, and from \cite[(4.1.5)]{BK1}, we can at once see that $y$ intertwines $\mu_-$. The proof is completed.
\end{proof}

The proposition completes the proof of Proposition 3.2.5, and hence that of Theorem 3.2.6.

\section{Simple types}          

\subsection{}                    

In this section, we define an analogue of a simple type for $GL(N,F)$ defined by \cite[(5.5.10)]{BK1}.

Suppose that $[\mathfrak{A},n,0,\beta]$ is a good skew simple stratum in $A = \mathrm{End}_F(V)$. Let $E = F[\beta]$, and $B = B_\beta$ the $A$-centralizer of $\beta$. Put $R = \dim_E(V)$. Let $\mathfrak{A} = \mathfrak{A}(\mathcal{L})$, $\mathfrak{B} = \mathfrak{A} \cap B$, and put $e = e(\mathcal{L}_{\mathfrak{o}_E})$.

From Proposition 1.3.1, $B^\times \cap G$ is the unramified unitary group of the non-degenerated $E$-anti-hermitian space $(V,\widetilde{h}_\beta)$, and from Proposition 2.2.2, it is of type C in the sense of \cite[(10.1.2)]{BT}. In this paragraph, we recall the structure of the affine Weyl group of $B^\times \cap G$ by \cite[10.1]{BT} and \cite{Ti}. 
Denote by $\Vec{G}_1$ the algebraic group defined over $E_0$ such that the group of $E_0$-rational points in $\Vec{G}_1$, denoted by $G_1 = \Vec{G}_1(E_0)$, is equal to $B^\times \cap G$.

In order to quote \cite[10.1]{BT} and \cite{Ti}, we rewrite the Witt basis $\mathcal{V}$ of (2.2.2) for $(V,\widetilde{h}_\beta)$ as follows: Let $r = R/2$ and $I = \{\pm 1, \cdots, \pm r\}$. Put $\mathcal{V} = \{e_i \vert\ i \in I\}$ with $e_{-r} = v_1, e_{-r+1} = v_2, \cdots, e_{-1} = v_r; e_1 = v_{r+1}, \cdots, e_r = v_{2r} = v_R$.

 We express elements of $G_1$ in the matrix form by this basis $\mathcal{V}$.
Let $\Vec{S}$ be the maximal $E_0$-split torus of $\Vec{G}_1$ defined by
\begin{equation*}
\Vec{S}(E_0) = \{\mathrm{Diag}(d_{-r}, \cdots, d_{-1}, d_1, \cdots, d_r) \vert\ d_i \in E_0\ \text{and}\ d_{-i}d_i = 1\ (i \in I)\}.
\end{equation*}
Let $\Vec{Z}$ be the centralizer of $\Vec{S}$, and $\Vec{N}$ the normalizer of $\Vec{S}$. Then we have
\begin{equation*}
\Vec{Z}(E_0) = \{\mathrm{Diag}(d_{-r}, \cdots, d_{-1}, d_1, \cdots, d_r) \vert\ d_i \in E\ \text{and}\ \overline{d_{-i}}d_i = 1\ (i \in I)\}.
\end{equation*}
Write $H = \Vec{Z}(E_0)$ for simplicity. Then $H$ has the maximal compact open subgroup
\begin{equation*}
H_0 = \{\mathrm{Diag}(d_{-r}, \cdots, d_{-1}, d_1, \cdots, d_r) \vert\ d_i \in \mathfrak{o}_E^\times\ \text{and}\ \overline{d_{-i}}d_i = 1\ (i \in I)\},
\end{equation*}
which coincides with $Z_c$ in the notation of \cite[1.2]{Ti}. Let $\Vec{W}_0 = \Vec{N}(E_0)/H$ and $\Vec{W} = \Vec{N}(E_0)/H_0$.

For $i, j \in I$, denote by $\delta_{i,j}$ the Kronecker delta. Then the group $\Vec{N}(E_0)$ consists of all matrices of the form $n = n(\sigma; d_{-r}, \cdots, d_r) = (g_{ij})$ with $g_{ij} = \delta_{i,\sigma(j)}d_j$, where (1) $\sigma$ is a permutation of $I$ which preserves the partition of $I$ in pairs $(-i,i)$, (2) $d_i \in E$ such that $\overline{d_{-i}}d_i = 1$, and (3) $\mathrm{det}(n) = \pm \prod_{i \in I} d_i = 1$.

For an integer $i$, $1 \le i \le r$, we define a character $a_i: \Vec{S} \to \Vec{GL}_1$ by 
\begin{equation*}
a_i(\mathrm{Diag}(d_{-r}, \cdots, d_r)) = d_{-i},
\end{equation*}
where $\Vec{GL}_1$ denotes the multiplicative group defined over $E_0$. Then $(a_i)_{1 \le i \le r}$ is a $\mathbb{Z}$-basis of the character group $X^* = \mathrm{Hom}_{E_0}(\Vec{S},\Vec{GL}_1)$.
Put $a_{-i} = -a_i,\ a_{ij} = a_i + a_j$ in $X^*$. Then $\Phi = \{a_{ij} \vert\ i, j \in I, i \ne \pm j\} \cup \{2a_i \vert\ i \in I\}$ is the root system of $(\Vec{G}_1,\Vec{S})$. 
Let $\Vec{U}_a$ be the root subgroup of $\Vec{G}_1$ associated with a root $a \in \Phi$. Associated with $a_{ij}$ and $2a_i$, we define elements $u_{ij}(c)\ (c \in E)$ and $u_i(0,d)\ (d \in E_0)$ of $G_1 = \Vec{G}_1(E_0)$ respectively as follows: $u_{ij}(c) = 1 + (g_{k\ell})$ with $g_{-j,i} = \overline{c},\ g_{-i,j} = -c$ and all other $g_{k\ell} = 0$, and $u_i(0,d) = 1 + (g_{k\ell})$ with $g_{-i,i} = d$ and all other $g_{k\ell} = 0$ (cf. \cite[(10.2.1)]{BT}), where we recall that $2 \in E_0$ is invertible. 
Then $\Vec{U}_{a_{ij}}(E_0) = \{u_{ij}(c) \vert\ c \in E\}$ and $\Vec{U}_{2a_i}(E_0) = \{u_i(0,d) \vert\ d \in E_0\}$. Further, we define elements $m(u_{ij}(c))\ (c \in E^\times)$ and $m(u_i(0,d))\ (d \in E_0^\times)$ of $\Vec{N}(E_0)$ by
\begin{eqnarray*}
m(u_{ij}(c)) &=& u_{-j,-i}(-c^{-1})u_{ij}(c)u_{-j,-i}(-c^{-1})\\
          &=& n(\sigma; d_{-r}, \cdots, d_r),
\end{eqnarray*}
where $\sigma = (i,-j)(j,-i),\ d_{-i} = c^{-1},\ d_{-j} = -(\overline{c})^{-1},\ d_j = -c,\ d_i = \overline{c}$ and all other $d_k = 1$, and
\begin{eqnarray*}
m(u_i(0,d)) &=& u_{-i}(0,-d^{-1})u_i(0,d)u_{-i}(0,-d^{-1})\\
         &=& n(\sigma; d_{-r}, \cdots, d_r)
\end{eqnarray*}
where $\sigma = (i,-i),\ d_{-i} = -d^{-1},\ d_i = d$ and all other $d_k = 1$.
For each integer $i$, $1 \le i \le r$, we define an element $h_i$ of $H_0$ by
$h_i = \mathrm{Diag}(d_{-r}, \cdots, d_r)\ \text{with}\ d_{-r+i-1} = d_{r-i+1} = -1\ \text{and all other}\ d_k = 1$.
Put
\begin{equation*}
 n_{s_i} =
 \begin{cases}
 m(u_{-(r-i),r-i+1}(1))h_i\ \hspace{3mm} (1 \le i \le r-1)\\
 m(u_{-1}(0,1))h_r\ \hspace{18mm} (i = r).
 \end{cases}
\end{equation*}
Then it follows from \cite[(10.1.2), (10.1.6)]{BT} that $n_{s_r}, n_{s_{r-1}}, \cdots, n_{s_1} \in \Vec{N}(E_0)$ correspond to the roots $2a_{-1}, a_{1,-2}, \cdots, a_{r-1,-r}$, respectively, which form a basis, $\Delta$,  of $\Phi$. The root $2a_{-r}$ is the highest root with respect to $\Delta$. Associated with this $2a_{-r}$, put
\begin{equation*}
n_{s_0} = n(\sigma; d_{-r}, \cdots, d_r)
\end{equation*}
where $\sigma = (-r,r),\ d_{-r} = - \varpi_E^{-1},\ d_r = \varpi_E$ and all other $d_i = 0$.

We now denote by $N_0$ the subgroup of $\Vec{N}(E_0)$ generated by $\{n_{s_1}, \cdots, n_{s_r}\}$, and by $N_{\mathfrak{o}}$ the subgroup of $\Vec{N}(E_0)$ generated by $N_0$ and $H_0$. Then $N_{\mathfrak{o}}$ consists of $n(\sigma; d_{-r}, \cdots, d_r) \in \Vec{N}(E_0)$ with $d_i \in \mathfrak{o}_E^\times$, and $\Vec{N}(E_0)$ is generated by $N_{\mathfrak{o}}$ and $H = \Vec{Z}(E_0)$.
We define a subgroup $\Vec{D}$ of $H$ by
\begin{equation*}
\Vec{D} = \{ \mathrm{Diag}(\varpi_E^{m_r}, \cdots, \varpi_E^{m_1}, \varpi_E ^{-m_1}, \cdots, \varpi_E ^{-m_r}) \mid m_1, \cdots, m_r \in \mathbb{Z}\}.
\end{equation*}
Then, since $E^\times = \varpi_E^{\mathbb{Z}} \times \mathfrak{o}_E^\times$, we have semi-direct products $H = \Vec{D} \cdot H_0$ and 
\begin{equation*}
\Vec{N}(E_0) = \Vec{D} \rtimes N_{\mathfrak{o}}.
\end{equation*}
 
Since the derived subgroup of $\Vec{G}_1$ is semi-simple and simply-connected, $\Vec{W} = \Vec{N}(E_0)/H_0$ is an affine Weyl group (cf. \cite[1.13]{Ti}). Since $E/E_0$ is unramified, it follows from \cite[1.6, 1.8]{Ti} that 
\begin{equation*}
\Phi_{af} = \{a_{ij} + \gamma \vert\ i, j \in I, i \ne \pm j, \gamma \in \mathbb{Z}\} \cup \{2a_i + \gamma \vert\ i \in I, \gamma \in \mathbb{Z}\}
\end{equation*}
(cf. \cite[1.15]{Ti}). The set $\{2a_{-1}, a_{1,-2}, \cdots, a_{r-1,-r}, 2a_{r} + 1\}$ is a basis of $\Phi_{af}$. For each $i$, $0 \le i \le r$, denote by $s_i \in \Vec{W}$ the image of $n_{s_i} \in \Vec{N}(E_0)$ under the canonical map $\Vec{N}(E_0) \to \Vec{W} = \Vec{N}(E_0)/H_0$.
Then it follows that $s_r, s_{r-1}, \cdots, s_1, s_0$ are the affine reflections associated with $2a_{-1}, a_{1,-2}, \cdots, a_{r-1,-r}, 2a_{r} + 1$, respectively.

\begin{prop}
Let notation and assumptions be as above. Then $\Vec{W}$ is a Coxeter group with a set of generators $\{s_0, s_1, \cdots, s_r\}$, and there is an isomorphism
\begin{equation*}
\Vec{W} \simeq \Vec{D} \rtimes \Vec{W}_0.
\end{equation*}
Identifying $\Vec{W}$ with $\Vec{D} \rtimes \Vec{W}_0$ via this isomorphism, we can regard $\Vec{W}_0$ as a finite Coxeter group with a set of generators $\{s_1, \cdots, s_r\}$.
\end{prop}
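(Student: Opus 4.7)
The plan is to assemble the three claims from pieces that have already been set up, invoking the Bruhat--Tits framework. First I would note that, by hypothesis, the derived subgroup of $\Vec{G}_1$ is semisimple and simply connected and $E/E_0$ is unramified. Hence, by \cite[1.13]{Ti}, the group $\Vec{W} = \Vec{N}(E_0)/H_0$ coincides with the affine Weyl group of the affine root system $\Phi_{af}$ explicitly written out above. Having singled out the basis
\begin{equation*}
\{2a_{-1},\ a_{1,-2},\ \ldots,\ a_{r-1,-r},\ 2a_r + 1\}
\end{equation*}
of $\Phi_{af}$, the general theory of affine Weyl groups attaches an affine reflection to each simple affine root, and these reflections generate the Coxeter group; in our notation these are precisely $s_r, s_{r-1}, \ldots, s_1, s_0$. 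This already proves (1), with the Coxeter matrix read off from the affine Dynkin diagram of $\Phi_{af}$ (which is of type $\widetilde{C}_r$ since $E/E_0$ is unramified, cf.\ \cite[10.1.2]{BT}).

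Next, I would derive the semidirect product structure from the factorization $\Vec{N}(E_0) = \Vec{D} \rtimes N_{\mathfrak{o}}$ already established in the preamble. Quotienting both sides by $H_0$, which is contained in $N_{\mathfrak{o}}$ and meets $\Vec{D}$ trivially (since elements of $\Vec{D}$ are strictly positive integer powers of $\varpi_E$ on the diagonal while those of $H_0$ have unit entries), gives
\begin{equation*}
\Vec{W} = \Vec{N}(E_0)/H_0 \simeq \Vec{D} \rtimes (N_{\mathfrak{o}}/H_0).
\end{equation*}
On the other hand, from the decomposition $H = \Vec{D}\cdot H_0$ one has $\Vec{W}_0 = \Vec{N}(E_0)/H = \Vec{N}(E_0)/(\Vec{D}\cdot H_0) \simeq N_{\mathfrak{o}}/H_0$, giving the desired isomorphism $\Vec{W} \simeq \Vec{D}\rtimes \Vec{W}_0$.

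Finally, under this identification, $\Vec{W}_0$ is the spherical Weyl group of the (non-affine) root system $\Phi$ of $(\Vec{G}_1,\Vec{S})$ relative to the basis $\Delta = \{2a_{-1}, a_{1,-2}, \ldots, a_{r-1,-r}\}$, which is of type $C_r$. The reflections $s_1, \ldots, s_r$, being by construction the images of $n_{s_1}, \ldots, n_{s_r}$ associated to the elements of $\Delta$, form the standard Coxeter generating set for $\Vec{W}_0$; this is the last assertion. The main potential obstacle is bookkeeping: verifying that each $n_{s_i}$ really lies in $\Vec{N}(E_0)$ and reduces modulo $H_0$ to the reflection associated with its prescribed simple root, and that $\Vec{D}$ is indeed normalized by $N_{\mathfrak{o}}$ in $\Vec{W}$ (so that the semidirect product survives the passage to the quotient). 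Both checks are routine once one writes $n_{s_i}$ in the matrix form $n(\sigma; d_{-r}, \ldots, d_r)$ already recorded and computes conjugation on diagonal matrices; there is no substantive new input beyond the explicit matrix formulas already given.
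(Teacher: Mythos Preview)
Your proof is correct and follows essentially the same route as the paper: both deduce the Coxeter structure from the preceding Bruhat--Tits setup, pass the decomposition $\Vec{N}(E_0)=\Vec{D}\rtimes N_{\mathfrak{o}}$ to the quotient by $H_0$, and identify $N_{\mathfrak{o}}/H_0$ with $\Vec{W}_0$ (the paper cites \cite[(10.1.6),(10.1.7)]{BT} for this last step, while you compute it directly from $H=\Vec{D}\cdot H_0$). One cosmetic slip: elements of $\Vec{D}$ have diagonal entries $\varpi_E^{m_i}$ with $m_i\in\mathbb{Z}$, not ``strictly positive integer powers''; what you actually need, and what holds, is $\Vec{D}\cap H_0=\{1\}$.
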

\begin{proof}
The first assertion has been proved above. For the second, from the above arguments, we have
\begin{equation*}
\Vec{W} = (\Vec{D} \rtimes N_{\mathfrak{o}})/H_0 = \Vec{D} \rtimes (N_{\mathfrak{o}}/H_0),
\end{equation*}
(cf. \cite[2.1]{IM}). By definition, $\{s_1, \cdots, s_r\}$ is contained in $N_0$ and so in $N_{\mathfrak{o}}$. Thus from \cite[(10.1.6), (10.1.7)]{BT} there is an isomorphism $N_{\mathfrak{o}}/H_0 \simeq \Vec{W}_0$, which shows the second assertion. The last is clear.
\end{proof}

\subsection{}           

Suppose that $[\mathfrak{A},n,0,\beta]$ is a good skew simple stratum in $A = \mathrm{End}_F(V)$ as in 5.1. Let $E = F[\beta]$, and $B = B_\beta$ the $A$-centralizer of $\beta$. Let $\mathfrak{A} = \mathfrak{A}(\mathcal{L})$, $\mathfrak{B} = \mathfrak{A} \cap B$, and put $e = e(\mathcal{L}_{\mathfrak{o}_E})$. 
{\bf Hereafter we assume that $\mathfrak{A}$ is principal}. Then, for $R = \dim_E(V)$, there is a positive integer $f$ such that
\begin{equation*}
R = fe.
\end{equation*}

We choose self-dual $\mathfrak{o}_E$-lattice chains $\mathcal{L}_M,\ \mathcal{L}_m$ in $V$ such that $e(\mathcal{L}_M \vert \mathfrak{o}_E) = 1$, $e(\mathcal{L}_m \vert \mathfrak{o}_E) = R$, and $\mathcal{L}_M \subset \mathcal{L} \subset \mathcal{L}_m$, as in 3.1. 
In $B = B_\beta$, put $\mathfrak{B}_M = \mathrm{End}_{\mathfrak{o}_E}^0(\mathcal{L}_M)$ and $\mathfrak{B}_m = \mathrm{End}_{\mathfrak{o}_E}^0(\mathcal{L}_m)$, as in 3.1. 
Then $B^\times \cap G$ contains an Iwahori subgroup $\Vec{U}(\mathfrak{B}_m) = \mathfrak{B}_m \cap G$. From Proposition 5.1.1, we have the semi-direct product
\begin{equation*}
\Vec{W} = \Vec{D} \rtimes \Vec{W}_0,
\end{equation*}
and an Iwahori-Bruhat decomposition of $B^\times \cap G$
\begin{equation}
B^\times \cap G = \Vec{U}(\mathfrak{B}_m)\Vec{W}\Vec{U}(\mathfrak{B}_m).
\end{equation}

Let $V = \bigoplus_{i=1}^e V^i$ be the $E$-decomposition of $V$ subordinated to $\mathcal{L}_{\mathfrak{o}_E}$, and write $\mathcal{V} = \{v_i\}$ again.
For each integer $i$, $1 \le i \le e$, we may set 
\begin{center}
$\mathcal{V}^i = \mathcal{V} \cap V^i = \{ v_{(i-1)f+1}, v_{(i-1)f+2}, \cdots, v_{if}\}$.
\end{center}
For each $i$, $1 \le i \le e$, define an integer $\overline{i}$, with $1 \le \overline{i} \le e$ by
\begin{equation}
\overline{i} = e - i +1.
\end{equation}
For each $i$, $1 \le i \le [(e+1)/2]$, we rewrite the basis $\mathcal{V}^i$ and $\mathcal{V}^{\overline{i}}$ as follows: $\mathcal{V}^i = \{v_1^i, v_2^i, \cdots, v_f^i\}$, $\mathcal{V}^{\overline{i}} = \{v_1^{\overline{i}}, v_2^{\overline{i}}, \cdots, v_f^{\overline{i}}\}$, and 
\begin{equation}
\begin{cases}
  v_1^i = v_{(i-1)f+1}, v_2^i = v_{(i-1)f+2}, \cdots, v_f^i = v_{if},\\
  v_1^{\overline{i}} = v_{\overline{i}f}, v_2^{\overline{i}} = v_{\overline{i}f-1}, \cdots, v_f^{\overline{i}} = v_{(\overline{i}-1)f+1}.
\end{cases}
\end{equation}
If $i \ne \overline{i}$, each $Ev_j^i + Ev_j^{\overline{i}}$ is a hyperbolic subspace of $V$ by Lemma 2.2.3. If $i = \overline{i}$, $e$ is odd and $i = (e+1)/2$. Since $R = ef$ is even, so $f$ is also even. In this case, each $Ev_j^i + Ev_{f-j+1}^i$ is a hyperbolic subspace of $V$ as well.

 Put $\widetilde{\Vec{\mathfrak{M}}}(\mathfrak{B}) = \bigoplus_{i=1}^e \mathfrak{B}^i$ as in the proof of Proposition 3.2.5, where $\mathfrak{B}^i = \mathfrak{A}^{(i)} \cap \mathrm{End}_E(V^i)$ for $\mathfrak{A}^{(i)}$, defined in 4.1. Denote by $\Vec{D}(\mathfrak{B})$ the $\Vec{D}$-centralizer of $\widetilde{\Vec{\mathfrak{M}}}(\mathfrak{B})^\times$. We define elements $n_{\Vec{s}_1}, n_{\Vec{s}_2}, \cdots, n_{\Vec{s}_{[e/2]}}$ of $N_{\mathfrak{o}}$ as follows: for
 $1 \le i \le [e/2]-1$,
\begin{equation*}
\begin{cases}
 n_{\Vec{s}_i}: v_j^i \leftrightarrow v_j^{i+1},\ v_j^{\overline{i}} \leftrightarrow v_j^{\overline{i+1}},\ \text{for}\ 1 \le j \le f,\\
 n_{\Vec{s}_i} \vert V^k \equiv I,\ \text{for}\ k \ne i,\ \overline{i},
\end{cases}
\end{equation*}
and
\begin{equation*}
\begin{cases}
 n_{\Vec{s}_{[e/2]}}:v_j^{[e/2]} \mapsto v_j^{\overline{[e/2]}},\ v_j^{\overline{[e/2]}} \mapsto -v_j^{[e/2]},\ \text{for}\ 1 \le j \le f,\\
 n_{\Vec{s}_{[e/2]}} \vert V^k \equiv I,\ \text{for}\ k \ne [e/2]
\end{cases}
\end{equation*}
Let $\Vec{s}_1, \Vec{s}_2, \cdots, \Vec{s}_{[e/2]}$ be the canonical image of $n_{\Vec{s}_1}, n_{\Vec{s}_2}, \cdots, n_{\Vec{s}_{[e/2]}}$, respectively, under the canonical map $N_{\mathfrak{o}} \to \Vec{W}_0$.
Denote by $\Vec{W}_0(\mathfrak{B})$ the subgroup of $\Vec{W}_0$ generated by $\Vec{s}_1, \Vec{s}_2, \cdots, \Vec{s}_{[e/2]}$. From Proposition 5.1.1, we can define a subgroup,  $\Vec{W}(\mathfrak{B})$, of $\Vec{W}$ by
\begin{equation*}
\Vec{W}(\mathfrak{B}) = \Vec{D}(\mathfrak{B}) \rtimes \Vec{W}_0(\mathfrak{B}).
\end{equation*}
This group is the $\Vec{W}$-normalizer of $\widetilde{\mathfrak{M}}(\mathfrak{B})^\times$.

\subsection{}                

Suppose that $[\mathfrak{A},n,0,\beta]$ is a good skew simple stratum in $A$, with $\mathfrak{A} = \mathfrak{A}(\mathcal{L})$ principal. Let $E = F[\beta]$, $e = e(\mathcal{L}_{\mathfrak{o}_E})$, and $B = B_\beta$ be the $A$-centralizer of $\beta$. 
We have $R = \dim_E(V) = ef$, for some positive integer $f$, as in 5.2. We note that $f$ must be even if $e$ is odd, since $R$ is even. Since $J_-(\beta,\mathfrak{A})/J_-^1(\beta,\mathfrak{A}) \simeq \Vec{U}(\mathfrak{B})/\Vec{U}^1(\mathfrak{B})$, from Proposition 2.2.5, there is a canonical isomorphism:
\begin{equation*}
J_-(\beta,\mathfrak{A})/J_-^1(\beta, \mathfrak{A}) \simeq
\begin{cases}
GL(f,k_E)^{e/2}                            &  \text{if $e$ is even,}\\
GL(f,k_E)^{(e-1)/2} \times U(f, k_{E_0})    &  \text{if $e$ is odd},
\end{cases}
\end{equation*}
where $U(f, k_{E_0})$ is the unitary group of a non-degenerate $k_E/k_{E_0}$-anti-hermitian form.

Suppose that $\sigma_0$ (resp. $\sigma_1$) is an irreducible cuspidal representation of $GL(f,k_E)$ (resp. $U(f, k_{E_0}))$. If $e$ is even, we define an irreducible representation $\sigma_-$ of $GL(f, k_E)^{e/2}$ by
\begin{equation*}
\sigma_- = \sigma_0 \otimes \cdots \otimes \sigma_0 = \displaystyle \bigotimes^{e/2} \sigma_0,
\end{equation*}
and if $e$ is odd, we define an irreducible representation $\sigma_-$ of $GL(f,k_E)^{(e-1)/2} \times U(f, k_{E_0})$ by
\begin{equation*}
\sigma_- = \sigma_0 \otimes \cdots \otimes \sigma_0 \otimes \sigma_1 = \bigl(\displaystyle \bigotimes^{(e-1)/2} \sigma_0 \bigl) \otimes \sigma_1.
\end{equation*}
Via the above isomorphism, we lift $\sigma_-$ to an irreducible representation, say again $\sigma_-$, of $J_-(\beta,\mathfrak{A})$. We can also regard $\sigma_-$ as an irreducible representation of $\Vec{U}(\mathfrak{B})$.

Let $[\mathfrak{A},n,0,\beta]$ is a good skew simple stratum in $A$, with $\mathfrak{A} = \mathfrak{A}(\mathcal{L})$ principal, and $\theta_- \in \mathcal{C}_-(\mathfrak{A},0,\beta)$. 
Then there is the unique irreducible representation $\eta_-$ of $J_-^1(\beta,\mathfrak{A})$ which contains $\theta_-$, and from Theorem 3.2.6, there is an irreducible representation $\kappa_-$ of $J_-(\beta,\mathfrak{A})$ which is a $\beta$-extension of $\eta_-$.

\begin{defn}             
Let notation and assumptions be as above. We say that a representation $\lambda_-$ is a {\it simple type} (of positive level) in $G$, if it has the following form:
\begin{equation*}
\lambda_- = \kappa_- \otimes \sigma_-,
\end{equation*}
for a $\beta$-extension $\kappa_-$  and an irreducible representation $\sigma_-$ of $J_-(\beta,\mathfrak{A})$ as above.
\end{defn}

The representation $\lambda_-$ is an analogue of a simple type for $GL_N(F)$ defined by part (a) of \cite[(5.5.10)]{BK1}.

\begin{prop}[(5.3.2) of \cite{BK1}]           
Let $\lambda_- = \kappa_- \otimes \sigma_-$ be a simple type in $G$. Let $E = F[\beta]$, $B = B_\beta$, and $\mathfrak{B} = \mathfrak{A} \cap B$. Then $\lambda_-$ is irreducible and
\begin{equation*}
I_G(\lambda_-) = J_-(\beta,\mathfrak{A})I_{B^\times \cap G}(\sigma_- \vert \Vec{U}(\mathfrak{B}))J_-(\beta,\mathfrak{A}),
\end{equation*}
\end{prop}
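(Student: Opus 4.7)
The plan is to follow the proof of \cite[(5.3.2)]{BK1}, making essential use of Proposition 3.1.1, Proposition 3.1.4, and the Iwahori-type decompositions of Section 4 to handle the passage to $\Gamma$-fixed points.

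First I would establish irreducibility. Since $\sigma_-$ is inflated from the finite quotient $J_-/J_-^1 \simeq \Vec{U}(\mathfrak{B})/\Vec{U}^1(\mathfrak{B})$, its restriction to $J_-^1$ is a multiple of the trivial character, whereas $\kappa_- \vert J_-^1 = \eta_-$ is irreducible by Proposition 3.1.1. A standard Clifford-theoretic argument then shows that every element of $\mathrm{End}_{J_-}(\kappa_- \otimes \sigma_-)$ has the form $\mathrm{id} \otimes T$ with $T$ commuting with $\sigma_-$, and Schur's lemma gives irreducibility.

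The inclusion $\supseteq$ is easy. The group $J_-$ sits in $I_G(\lambda_-)$ trivially. For $b \in I_{B^\times \cap G}(\sigma_- \vert \Vec{U}(\mathfrak{B}))$, the definition of $\beta$-extension gives $b \in B^\times \cap G \subseteq I_G(\kappa_-)$, and the hypothesis on $\sigma_-$ supplies an intertwiner on $\Vec{U}(\mathfrak{B}) \cap \Vec{U}(\mathfrak{B})^b$; their tensor product yields an element of $I_b(\lambda_-)$.

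The harder direction is $\subseteq$. Since $\lambda_- \vert J_-^1$ is a multiple of $\eta_-$, Proposition 3.1.4 gives $I_G(\lambda_-) \subseteq I_G(\eta_-) = J_-^1(B^\times \cap G)J_-^1 \subseteq J_-(B^\times \cap G)J_-$; writing a general element as $j_1 b j_2$ with $j_i \in J_-$ and $b \in B^\times \cap G$ reduces the problem to showing that any such $b$ intertwining $\lambda_-$ must also intertwine $\sigma_- \vert \Vec{U}(\mathfrak{B})$. The key tool is the decomposition $J_- \cap J_-^b = (J_-^1 \cap (J_-^1)^b)(\Vec{U}(\mathfrak{B}) \cap \Vec{U}(\mathfrak{B})^b)$, obtained from the Iwahori-decomposition machinery of Section 4 after a double-coset reduction placing $b$ in a position normalising $\widetilde{\Vec{\mathfrak{M}}}(\mathfrak{B})^\times$ (in the style of the proof of Proposition 3.2.5). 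Combined with $\dim I_b(\kappa_-) = \dim I_b(\eta_-) = 1$ (Proposition 3.1.1 plus the $\beta$-extension property), one obtains a canonical factorisation
\begin{equation*}
I_b(\lambda_-) \simeq I_b(\kappa_-) \otimes I_b(\sigma_- \vert \Vec{U}(\mathfrak{B})),
\end{equation*}
exactly as in the proof of \cite[(5.3.2)]{BK1}, whence non-vanishing of the left side forces non-vanishing of the right factor, i.e., $b \in I_{B^\times \cap G}(\sigma_- \vert \Vec{U}(\mathfrak{B}))$.

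The main obstacle is the tensor factorisation of $I_b(\lambda_-)$: one must verify that, on the pro-$p$ part of $J_- \cap J_-^b$ the representation $\kappa_-$ behaves rigidly (so that all intertwiners there are scalar multiples of a fixed one), while on the reductive part $\sigma_-$ carries the remaining information. Compared with the $GL(N,F)$ case, the extra subtlety is that all relevant subgroups must be $\Gamma$-stable; Lemma 3.2.2 ensures that forming $\Gamma$-fixed points commutes with the products appearing in the Iwahori decomposition of Proposition 4.1.3, so the BK1 argument transfers verbatim once the decomposition of $J_- \cap J_-^b$ is in hand.
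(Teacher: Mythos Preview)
Your approach is essentially the same as the paper's, which simply says ``By using Propositions 3.1.1 and 3.2.5, we can prove the assertion in the same way as the proof of \cite[(5.3.2)]{BK1}.'' The overall structure you outline---irreducibility via Clifford theory, the easy inclusion, and the reduction of the hard inclusion to a tensor factorisation $I_b(\lambda_-) \simeq I_b(\kappa_-) \otimes I_b(\sigma_- \vert \Vec{U}(\mathfrak{B}))$ for $b \in B^\times \cap G$---is correct.

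However, two of your justifications are off. First, the decomposition of $J_- \cap J_-^b$ does \emph{not} come from the parabolic Iwahori decompositions of Section~4; those are relative to $P = MN_u$ and say nothing about $J_- \cap J_-^b$ for a general $b \in B^\times \cap G$. What is actually needed is much less: only that $\Vec{U}(\mathfrak{B}) \cap \Vec{U}(\mathfrak{B})^b \subset J_- \cap J_-^b$ (immediate, since $\Vec{U}(\mathfrak{B}) \subset J_-$ and $b$ normalises $B$) together with $\dim I_b(\eta_-) = 1$ from Proposition~3.1.1. Restricting any $T \in I_b(\lambda_-)$ to $J_-^1 \cap (J_-^1)^b$ forces $T = S \otimes T'$ with $S$ the unique (up to scalar) intertwiner for $\kappa_-$ supplied by Proposition~3.2.5; then testing $T$ on $\Vec{U}(\mathfrak{B}) \cap \Vec{U}(\mathfrak{B})^b$ shows $T'$ intertwines $\sigma_-$. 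Second, the ``double-coset reduction placing $b$ in a position normalising $\widetilde{\Vec{\mathfrak{M}}}(\mathfrak{B})^\times$'' is neither needed nor available at this point: that an intertwiner can be so reduced is precisely the content of Lemma~5.5.2, which comes \emph{after} this proposition and \emph{uses} it (via Proposition~5.5.1). Invoking it here would be circular. Dropping these two detours, your argument goes through exactly as in \cite{BK1}.
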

\begin{proof}
By using Propositions 3.1.1 and 3.2.5, we can prove the assertion in the same way as the proof of \cite[(5.3.2)]{BK1}.
\end{proof}

Let $\Vec{W}(\mathfrak{B})$ be as in 5.2, and $\sigma_-$ be an irreducible representation of $\Vec{U}(\mathfrak{B})$ defined as above. Put
\begin{equation}
\Vec{W}(\sigma_-) = \{w \in \Vec{W}(\mathfrak{B}) \mid  (\sigma_-)^w \simeq \sigma_-\},
\end{equation}
where $(\sigma_-)^w(x) = \sigma_-(wxw^{-1})$ for $x \in \Vec{U}(\mathfrak{B})/\Vec{U}^1(\mathfrak{B})$.

The involution $x \mapsto \overline{x}: A^i \to A^{e-i+1}$, defined in 4.1, induces an involution $B^i \to B^{e-i+1}$. This is also induced by the involution on $B$ which is defined by $\widetilde{h}_\beta$. 
Under the identification $B^1 = \cdots = B^e = \mathbb{M}(f,E)$ via the Witt basis $\mathcal{V}$, the involution $B^i \to B^{e-i+1}$ induces naturally the involution on the $GL(f,\mathfrak{o}_E)$, and induces ones on $GL(f,k_E)$ and $U(f,k_{E_0})$. We write again by $^-$ these involutions. In particular, we have $U(f,k_{E_0}) = \{x \in GL(f,k_E) \mid x \overline{x} = 1\}.$

\begin{defn}               
Let $\sigma_0$ be an irreducible cuspidal representation of $GL(f,k_E)$. We define a representation $\sigma_0^*$ by
\begin{equation*}
\sigma_0^*(x) = \sigma_0(\overline{x}\ ^{-1}),
\end{equation*}
for $x \in GL(f,k_E)$. We say that the representation $\sigma_0$ is {\it self-dual}, if $\sigma_0 \simeq \sigma_0^*$.
\end{defn}

In this definition, the definition of $\sigma_0^*$ depends on the choice of the Witt basis $\mathcal{V}$. But the definition of self-dual does not depend on it. For, another Witt basis induces an involution on each $GL(f,\mathfrak{o}_E)$ which differs by a conjugation from the above involution $x \mapsto \overline{x}$.

If the component $\sigma_0$ of $\sigma_-$ is self-dual, it is easy to see that $\Vec{W}(\sigma_-)$ is equal to $\Vec{W}(\mathfrak{B})$.

In the next paragraph, we shall show the existence of a self-dual irreducible cuspidal representation $\sigma_0$ of $GL(f,k_E)$.

\begin{rem}            
Any irreducible cuspidal representation $\sigma_1$ of $U(f,k_{E_0})$ is automatically self-dual.
\end{rem}

\subsection{}           

Suppose that $f$ is an integer $\ge 2$. For simplicity, write $k_0 = k_{E_0}$ and $k = k_E$. Let $k_0 = \mathbb{F}_q$ be the finite field of order $q$. Then $k = \mathbb{F}_{q^2}$ is the quadratic extension of $k_0$.
Let $x \mapsto \overline{x} = x^q$ be the non-trivial Galois involution of $k/k_0$. 
Let $\Vec{G} = \Vec{GL}_f$ be the general linear group of rank $f$ defined over $k$, and $G = \Vec{G}(k)$ the group of $k$-rational points in $\Vec{G}$. We define a Frobenius map $F_0$ on $\Vec{G}$ as follows: for $g = (g_{ij}) \in \Vec{G}$,
\begin{equation*}
F_0(g) = (\overline{g}_{ij}) = (g_{ij}^q).
\end{equation*}

Let $(\sigma_0,\mathcal{V})$ be an irreducible cuspidal representation of $G = \Vec{G}(k)$. From the remark below Definition 5.3.3, we may set the representation $(\sigma_0^*,\mathcal{V})$ of $G$ to be one defined by
\begin{equation*}
\sigma_0^*(g) = \sigma_0(^t(F_0(g))^{-1}),\ g \in G,
\end{equation*}
where $^tg$ denotes the transpose of $g$. 

Put $\Vec{G}_1 = \mathrm{Res}_{k/k_0}(\Vec{G})$, where $\mathrm{Res}$ denotes the functor of restrictions of scalars. We may identify $\Vec{G}_1$ with $\Vec{G} \times \Vec{G} = \Vec{G} \times F_0(\Vec{G})$. We define a Frobenius map $F_1$ on $\Vec{G}_1$ as follows: for $(x,y) \in \Vec{G}_1 = \Vec{G} \times \Vec{G}$,
\begin{equation*}
F_1(x,y) = (F_0(y),F_0(x)).
\end{equation*}
Then we have $\Vec{G}_1(k_0) = \Vec{G}(k)$ and $\Vec{G}_1(k_0) = \Vec{G}_1^{F_1} = \{g \in \Vec{G}_1 \vert\ F_1(g) = g\}$.

We define automorphisms $\delta$ and $\tau$ of $\Vec{G}_1$ by $\delta(x,y) = (y,x)$ for $x, y \in \Vec{G}$ and so by 
\begin{equation*}
\tau(g) =\ ^t\delta(g)^{-1},
\end{equation*}
for $g \in \Vec{G}_1$, where $^t(x,y) = (^tx, ^ty)$ for $(x,y) \in \Vec{G}_1 = \Vec{G} \times \Vec{G}$. Then for $g = (g, F_0(g)) \in \Vec{G}_1(k_0) = \Vec{G}(k) = G$, we have $\delta(g) = F_0(g)$ and
\begin{equation*}
\tau(g) =\ ^t(F_0(g))^{-1}.
\end{equation*}
Let $\chi_{\sigma_0}$ be the character of $\sigma_0$, i.e., $\chi_{\sigma_0}(g) = \mathrm{Tr}(\sigma_0(g)),\ g \in G$. Then by Deligne-Lusztig theory \cite[Proposition 8.3]{DL} (cf. \cite[Chapter 7]{Ca}), it is well known that there are a minisotropic maximal $k$-torus, $\Vec{T}$, of $\Vec{G}$ and a regular (in general position) character, $\theta$, of $T = \Vec{T}(k)$ such that
\begin{equation*}
\chi_{\sigma_0} = \pm R_{T,\theta}\ \hspace{5mm} \text{(Deligne-Lusztig character)}.
\end{equation*}
Then there are an extension $k_f = \mathbb{F}_{q^{2f}}$ of $k$ of degree $f$ and  the multiplicative group $\Vec{GL}_1$ defined over $k_f$ such that $\Vec{T}$ is isomorphic to $\mathrm{Res}_{k_f/k}(\Vec{GL}_1)$. We identify $\Vec{T} = \mathrm{Res}_{k_f/k}(\Vec{GL}_1)$. Put $\Vec{T}_1 = \mathrm{Res}_{k/k_0}(\Vec{T})$. Then we have $T = \Vec{T}(k) = \Vec{T_1}(k_0)$.

We study $\chi_{\sigma_0^*}$. The automorphism $\tau$ of $\Vec{G}_1$ satisfies the following properties:

\begin{enumerate}
  \item $\tau$ is defined over $k_0$,
  \item $\tau  \circ F_1 = F_1 \circ \tau$,
  \item $\tau^2 = \mathrm{Id}$.
\end{enumerate}

Since $\sigma_0^*(g) = \sigma_0(\tau(g)),\ g \in G$, by definition, we have
\begin{equation*}
\chi_{\sigma_0^*}(g) = \chi_{\sigma_0}(\tau(g)) = \pm R_{T,\theta}(\tau(g)),\ g \in G.
\end{equation*}

We prove the following.

\begin{prop}            
We have $R_{T,\theta}(\tau(g)) = R_{\tau(T),\theta \circ \tau}(g),\ g \in G$.
\end{prop}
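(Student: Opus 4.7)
The plan is to use the geometric definition of the Deligne--Lusztig character in terms of the \'etale cohomology of the Deligne--Lusztig variety, and exploit the fact that $\tau$ is an automorphism of $\Vec{G}_1$ defined over $k_0$ commuting with $F_1$. Recall that if $B \supset T$ is a Borel subgroup of $\Vec{G}_1$ with unipotent radical $U$, then
\begin{equation*}
X_T = \{ x \in \Vec{G}_1 \mid x^{-1} F_1(x) \in U \}
\end{equation*}
carries commuting actions of $G = \Vec{G}_1^{F_1}$ by left translation and of $T^{F_1}$ by right translation, and
\begin{equation*}
R_{T,\theta}(g) = \frac{1}{|T^{F_1}|} \sum_{t \in T^{F_1}} \theta(t)\, \mathrm{Tr}\bigl((g,t) \mid H^*_c(X_T, \overline{\mathbb{Q}}_\ell)\bigr).
\end{equation*}

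Next I would observe that by properties (1)--(3) of $\tau$, the image $\tau(B)$ is a Borel subgroup of $\Vec{G}_1$ containing $\tau(T)$, with unipotent radical $\tau(U)$; and the identity $\tau \circ F_1 = F_1 \circ \tau$ implies that $\tau$ restricts to an algebraic isomorphism
\begin{equation*}
\tau : X_T \xrightarrow{\sim} X_{\tau(T)}.
\end{equation*}
Since $\tau$ is a group homomorphism, this isomorphism intertwines the $(g,t)$-action on $X_T$ (by $x \mapsto g x t$) with the $(\tau(g), \tau(t))$-action on $X_{\tau(T)}$; consequently
\begin{equation*}
\mathrm{Tr}\bigl((g,t) \mid H^*_c(X_T)\bigr) = \mathrm{Tr}\bigl((\tau(g), \tau(t)) \mid H^*_c(X_{\tau(T)})\bigr).
\end{equation*}
Applying this with $g$ replaced by $\tau(g)$ and using $\tau^2 = \mathrm{Id}$ yields
\begin{equation*}
\mathrm{Tr}\bigl((\tau(g),t) \mid H^*_c(X_T)\bigr) = \mathrm{Tr}\bigl((g, \tau(t)) \mid H^*_c(X_{\tau(T)})\bigr).
\end{equation*}

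Finally, I would substitute this into the character formula for $R_{T,\theta}(\tau(g))$ and perform the change of variable $s = \tau(t)$, which is a bijection $T^{F_1} \to \tau(T)^{F_1}$ (since $\tau$ commutes with $F_1$ and is involutive) preserving cardinalities. The resulting sum is exactly $R_{\tau(T),\, \theta \circ \tau}(g)$, which proves the identity.

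The main obstacle is really just making precise the functoriality of the Deligne--Lusztig construction under a $k_0$-automorphism commuting with the Frobenius; once the compatibility of $\tau$ with the data $(T, B, U)$ and with the $G \times T^{F_1}$-action is verified, the rest is a formal computation with traces and a change of summation variable. No delicate estimate or additional hypothesis beyond (1)--(3) is needed.
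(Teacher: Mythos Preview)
Your proof is correct and takes a cleaner, more geometric route than the paper's. The paper proceeds via the Deligne--Lusztig character formula in Jordan-decomposition form (their equation (5.4.1)): writing $g = su$, it expresses $R_{T,\theta}(\tau(g))$ as a sum over $x \in G$ with $x^{-1}\tau(s)x \in T$, weighted by $\theta$ and by Green functions $Q_{xT_1x^{-1}}^{C^0(\tau(s))}(\tau(u))$, and then verifies three compatibilities under $\tau$ term by term---matching $|C^0(\tau(s))^{F_1}|$ with $|C^0(s)^{F_1}|$, transforming the summation index via $x \mapsto \tau(x)$, and showing $Q_{xT_1x^{-1}}^{C^0(\tau(s))}(\tau(u)) = Q_{\tau(x)\tau(T_1)\tau(x)^{-1}}^{C^0(s)}(u)$. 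This last step is itself justified by observing that $\tau(L^{-1}(U)) = L^{-1}(\tau(U))$, i.e.\ exactly the functoriality of the Lang cover that you invoke. Your argument bypasses the Jordan decomposition and Green functions entirely by working directly with the equivariant isomorphism $\tau : X_T \xrightarrow{\sim} X_{\tau(T)}$ on cohomology and a single change of variable $t \mapsto \tau(t)$. Both proofs rest on the same core observation; yours packages it in one stroke at the level of the variety, while the paper threads it through the character formula, which has the minor advantage of avoiding any explicit appeal to $\ell$-adic cohomology.
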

\begin{proof}
We first note that $T = \Vec{T}_1(k_0) = \Vec{T}(k)$ and $G = \Vec{G}_1(k_0) = \Vec{G}(k)$.
We adopt Deligne-Lusztig theory \cite{DL} (cf. \cite[Chapter 7]{Ca}) to the groups $\Vec{G}_1 \supset \Vec{T}_1$ defined over $k_0$.
Let $g \in G = \Vec{G}_1(k_0)$ and $g = us = su$ be the Jordan decomposition of $g$, where $u$ is the unipotent part of $g$ and $s$ is the semisimple part of $g$. Then we have the character formula \cite[Theorem 4.2]{DL} (cf. \cite[Theorem 7.2.8]{Ca}) as follows:
\begin{equation*}
R_{T,\theta}(g) = \displaystyle \frac{1}{\vert \Vec{C}^0(s)^{F_1} \vert} \displaystyle \sum_{x \in G, x^{-1}sx \in T_1} \theta(x^{-1}sx)Q_{xT_1x^{-1}}^{C^0(s)}(u)
\end{equation*}
where $\Vec{C}^0(s)$ denotes the connected centralizer of $s$ in $\Vec{G}_1$, and $Q_{T_1}^{G_1}(u) = R_{T_1,1}(u)$.

For the decomposition $g = us$, $\tau(g) = \tau(u)\tau(s)$ is also the Jordan decomposition with $\tau(u)$ unipotent and $\tau(s)$ semisimple. Thus we obtain
\begin{equation}
R_{T,\theta}(\tau(g)) = \displaystyle \frac{1}{\vert \Vec{C}^0(\tau(s))^{F_1} \vert} \displaystyle \sum_{x \in G, x^{-1}\tau(s)x \in T_1} \theta(x^{-1}\tau(s)x)Q_{xT_1x^{-1}}^{C^0(\tau(s))}(\tau(u))
\end{equation}
as well.

(a) From the properties of $\tau$, we have $\tau(\Vec{C}^0(\tau(s))^{F_1}) = \Vec{C}^0(s)^{F_1}$ and
\begin{equation*}
\vert \Vec{C}^0(\tau(s))^{F_1} \vert = \vert \Vec{C}^0(s)^{F_1} \vert.
\end{equation*}

(b) Similarly, from $\tau(x^{-1}\tau(s)x) = \tau(x)^{-1}s\tau(x)$, we obtain
\begin{equation*}
\theta(x^{-1}\tau(s)x) = \theta \circ \tau(\tau(x)^{-1}s\tau(x)),
\end{equation*}
and if \ $x \in G = \Vec{G}_1(k_0),\ x^{-1}\tau(s)x \in T = \Vec{T}_1(k_0)$, we have
\begin{equation*}
\tau(x) \in G,\ \tau(x)^{-1}s\tau(x) \in \tau(T).
\end{equation*}

(c) We again have $\tau(\Vec{C}^0(\tau(s))) = \Vec{C}^0(s),\ \tau(\tau(x)\tau(\Vec{T}_1)\tau(x)^{-1}) = x\Vec{T}_1x^{-1}$. The Lang variety $\widetilde{X} = L^{-1}(\Vec{U})$ is associated with a Borel subgroup $\Vec{B} = \Vec{T}_1\Vec{U}$ of $\Vec{G}_1$, where $\Vec{U}$ is the unipotent radical of $\Vec{B}$. Thus 
\begin{equation*}
\tau(\widetilde{X}) = \tau(L^{-1}(\Vec{U})) = L^{-1}(\tau(\Vec{U}))
\end{equation*}
is associated with $\tau(\Vec{B}) = \tau(\Vec{T}_1)\tau(\Vec{U})$. Hence we get
\begin{equation*}
Q_{xT_1x^{-1}}^{C^0(\tau(s))}(\tau(u)) =Q_{\tau(x)\tau(T_1)\tau(x)^{-1}}^{C^0(s)}(u).
\end{equation*}
From (a), (b) and (c), it follows that the right-hand side of $R_{T,\theta}(\tau(g))$ in (5.4.1) is equal to $R_{\tau(T),\theta \circ \tau}(g)$. The proof is completed.
\end{proof}

We further study the right-hand side of the equality in Proposition 5.4.1 and obtain the following.

\begin{prop}                
We have $\chi_{\sigma_0^*} = \pm R_{T,\overline{\theta}_1}$ for the unique character $\theta_1$ of $T = \Vec{T}(k)$ with $\theta_1^q = \theta$.
\end{prop}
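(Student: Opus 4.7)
I would start from the identity of Proposition 5.4.1, namely
\begin{equation*}
\chi_{\sigma_0^*}(g) \;=\; \chi_{\sigma_0}(\tau(g)) \;=\; \pm R_{T,\theta}(\tau(g)) \;=\; \pm R_{\tau(T),\,\theta \circ \tau}(g).
\end{equation*}
Since $R_{T',\theta'}$ depends only on the $G$-conjugacy class of $(T',\theta')$ and, for a fixed torus, only on the Weyl orbit of the character, it suffices to show that $(\tau(T),\theta\circ\tau)$ is $G$-conjugate to a pair $(T,\chi)$ with $\chi$ lying in the same Weyl orbit as $\overline{\theta}_1$.

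To compute the action of $\tau$ on $T$, I would decompose $\tau = \iota \circ F_0$ with $\iota(g) = {}^{t}g^{-1}$, and realize the elliptic torus $T = k_f^{\times}$ inside $G = GL_f(k)$ as the group of multiplication operators on $k_f$ with respect to a fixed $k$-basis $\{1,\alpha,\ldots,\alpha^{f-1}\}$. The entries of $m(\zeta)$ lie in $k$, so $F_0(m(\zeta))$ has entries $m_{ij}(\zeta)^q$; comparing with the identity $\zeta^q \alpha^{jq} = \sum_{i} m_{ij}(\zeta)^q \alpha^{iq}$ shows that $F_0(m(\zeta))$ is the matrix of multiplication by $\zeta^q$ in the basis $\{1,\alpha^q,\ldots,\alpha^{(f-1)q}\}$, so for the change-of-basis matrix $P \in G$, conjugation by $P$ transforms $F_0|_T$ into $\zeta \mapsto \zeta^q$. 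Similarly, multiplication by any $\zeta \in k_f$ is self-adjoint with respect to the non-degenerate symmetric trace form $(x,y) \mapsto \mathrm{Tr}_{k_f/k}(xy)$, so its Gram matrix $G_0 \in G$ satisfies ${}^{t}m(\zeta) = G_0\, m(\zeta)\, G_0^{-1}$, showing that $\iota$ acts on $T$ as inversion after conjugation by $G_0$. Composing, there is $Q \in G$ with $Q T Q^{-1} = \tau(T)$, and the conjugate of $\tau|_T$ by $Q$ is $\zeta \mapsto \zeta^{-q}$ up to an element of $N_G(T)$. Hence $\theta \circ \tau$ on $\tau(T)$ transports, modulo the Weyl action on $T$, to the character $\theta^{-q}$ on $T$, and therefore $R_{\tau(T),\theta \circ \tau} = R_{T,\theta^{-q}}$.

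It remains to match $\theta^{-q}$ with $\overline{\theta}_1$ modulo the Weyl action. Since $T$ is the elliptic maximal torus attached to the unramified extension $k_f/k$, the rational Weyl group $N_G(T)/T$ is cyclic of order $f$, generated by the $q^{2}$-Frobenius of $\mathrm{Gal}(k_f/k)$, which acts on $T = k_f^{\times}$ as $\zeta \mapsto \zeta^{q^{2}}$ and hence on characters as $\theta \mapsto \theta^{q^{2}}$. Using $\theta_1^q = \theta$, applying this generator once to $\overline{\theta}_1 = \theta_1^{-1}$ yields
\begin{equation*}
(\overline{\theta}_1)^{q^{2}} \;=\; \theta_1^{-q^{2}} \;=\; (\theta_1^{q})^{-q} \;=\; \theta^{-q},
\end{equation*}
so $\theta^{-q}$ and $\overline{\theta}_1$ lie in the same Weyl orbit and $R_{T,\theta^{-q}} = R_{T,\overline{\theta}_1}$, completing the argument.

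The main obstacle will be the explicit matrix calculation tracing the actions of $F_0$ and $\iota$ on the restriction-of-scalars torus $T$: one must produce the conjugators $P$ and $G_0$, show $Q$ may be taken to be $\iota(P) G_0$, and verify that $\tau(Q) Q$ normalizes $T$ so that the Weyl-orbit adjustment is well defined. Once these reductions are in place, the final comparison of $\theta^{-q}$ with $\overline{\theta}_1$ is immediate from $\theta_1^q = \theta$ together with the cyclic Weyl action by $q^2$-powers, and the sanity check $f=1$ (with trivial Weyl group and $|\widehat{T}| = q^{2}-1$) reduces to the identity $\theta_1^{-q^2} = \theta_1^{-1}$.
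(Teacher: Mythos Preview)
Your proof is correct and follows essentially the same route as the paper's: both start from Proposition~5.4.1, realize $T$ as $k_f^\times$ via the regular representation on the basis $\{1,\alpha,\ldots,\alpha^{f-1}\}$, and track $F_0$ and transpose through explicit conjugators in $G$. The only cosmetic differences are that you handle the transpose via the Gram matrix of the trace form (the paper instead matches characteristic polynomials of a generator of $\overline{T}$ and its transpose to produce its conjugator $g_1$), and your conjugator lands on $\theta^{-q}$ so that one Weyl move by the $q^2$-Frobenius is needed to reach $\overline{\theta}_1$, whereas the paper's choice of conjugator $g_1g_0$ yields $\overline{\theta}_1$ directly without a Weyl adjustment.
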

\begin{proof}
From proposition 5.4.1, we have
\begin{equation*}
\chi_{\sigma_0^*} = \pm R_{\tau(T),\theta \circ \tau}.
\end{equation*}

We can represent $T = \Vec{T}(k)$ in $G = \Vec{G}(k)$ as follows: We choose an element $\alpha \in k_f^\times = k_f - \{0\}$ satisfying
\begin{enumerate}
 \item $\{1,\alpha,\cdots,\alpha^{f-1}\}$ is a basis of $k_f$ as a $k$-vector space,
 \item for the regular representation $\rho: k_f^\times \to G = \Vec{GL}_f(k)$ with respect to the basis, we may set
\begin{equation*}
T = \{\rho(x) \vert x \in k_f^\times\}.
\end{equation*}
\end{enumerate}
Write $\overline{\rho(x)} = F_0(\rho(x))$ for simplicity. We have $\overline{\alpha} = F_0(\alpha) \in k_f^\times$ and $\{1,\overline{\alpha},\cdots,\overline{\alpha}^{f-1}\}$ is also a $k$-basis of $k_f$. Let $\rho':k_f^\times \to G$ be the regular representation of $k_f^\times$ with respect to this new basis. Then, for $x \in k_f^\times$, we can check that
\begin{equation*}
\overline{\rho(x)} = \rho'(x^q) = \rho'(x)^q
\end{equation*}
and that there is an element $g_0 \in G$ such that $\rho'(x) = g_0\rho(x)g_0^{-1},\ x \in k_f^\times$. Hence we have
\begin{equation*}
\overline{\rho(x)} = g_0\rho(x)^qg_0^{-1},\ x \in k_f^\times
\end{equation*}
and
\begin{equation*}
\overline{T} = \{\overline{\rho(x)} \vert x \in k_f^\times\} = g_0Tg_0^{-1}. 
\end{equation*}
However, for $g \in \tau(T) =\ ^t\overline{T}$, we have
\begin{equation*}
\theta \circ \tau(g) = \theta((\overline{^tg})^{-1}) = \overline{\theta}(\overline{^tg}).
\end{equation*}
Since the Pontrjagin dual $\hat{T}$ of $T$ is (non-canonically) isomorphic to $k_f^{\times} = (\mathbb{F}_{q^{2f}})^\times$, it is a cyclic group of order $q^{2f} - 1$. It follows that there is a chracter $\theta_1$ of $T$ with $\theta_1^q = \theta$ as in the assertion. Ths we have $\theta \circ \tau(g) = \overline{\theta}_1^q(\overline{^tg})$. We can write $\overline{^tg} = \rho(x)$ for some $x \in k_f^\times$, so that 
\begin{equation*}
^tg = \overline{\rho(x)} = g_0\rho(x)^qg_0^{-1}.
\end{equation*}
From $\overline{T} = g_0Tg_0^{-1}$ above, it follows that $^{g_0}\overline{\theta}_1$ is a unique character of $\overline{T}$. Thus
\begin{equation*}
(^{g_0}\overline{\theta}_1)(^tg) = \overline{\theta}_1(g_0^{-1}(^tg)g_0) = \overline{\theta}_1(\rho(x)^q) = \overline{\theta}_1^q(\overline{^tg}) = \overline{\theta}(\overline{^tg}).
\end{equation*}
Hence, for $g \in \tau(T) =\ ^t\overline{T}$, we have
\begin{equation*}
\theta \circ \tau (g) =\ ^{g_0}\overline{\theta}_1(^tg).
\end{equation*}

Let $h$ be a generator of the group $\tau(T) =\ ^t\overline{T}$. Then the elements $h \in \tau(T)$ and $^th \in \overline{T}$ are both regular semisimple, and have the same characteristic polynomial. Thus there is an element $g_1 \in G$ such that $h = g_1(^th)g_1^{-1}$, and it does not depend on the choice of $h$. So we have $\tau(T) =\ ^t\overline{T} = g_1(\overline{T})g_1^{-1}$. Hence, since $^tg = g_1^{-1}gg_1$ for $g \in \tau(T)$, we have
\begin{equation*}
^{g_0}\overline{\theta}_1(^tg) =\ ^{g_0}\overline{\theta}_1(g_1^{-1}gg_1) =\ ^{g_1g_0}(\overline{\theta}_1)(g).
\end{equation*}
Consequently, it follows that
\begin{equation*}
\theta \circ \tau(g) =\ ^{g_1g_0}(\overline{\theta}_1)(g),\ g \in \tau(T)
\end{equation*}
and that
\begin{equation*}
(g_1g_0)^{-1}\tau(T)(g_1g_0) = T.
\end{equation*}
By the orthonality relation \cite[Theorem 6.8]{DL}for $R_{T,\theta}$, we obtain
\begin{equation*}
R_{\tau(T),\theta \circ \tau} = R_{\tau(T),^{g_1g_0}\overline{\theta}_1} = R_{T,\overline{\theta}_1},
\end{equation*}
which complete the proof.
\end{proof}

\begin{cor}         
If the integer $f$ is odd, there is an irreducible cuspidal representation $\sigma_0$ of $G = GL(f,k_E)$ such that $\sigma_0$ is equivalent to $\sigma_0^*$.
\end{cor}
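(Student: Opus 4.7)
The plan is to combine Proposition 5.4.2 with the Deligne--Lusztig parametrization of irreducible cuspidal representations of $GL(f,k_E)$, and to exhibit an explicit regular character of the anisotropic torus whose $W$-orbit is preserved by the involution $\theta \mapsto \overline{\theta}_1$.

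Since the irreducible cuspidal representations of $G = GL(f,\mathbb{F}_{q^2})$ are parametrized by $W$-orbits of regular characters $\theta$ of the anisotropic maximal torus $T \simeq \mathbb{F}_{q^{2f}}^\times$, with $W = N_G(T)/T$ cyclic of order $f$ acting on $\hat T$ by $\theta \mapsto \theta^{q^2}$, Proposition 5.4.2 reduces the task to finding a regular $\theta$ such that $\theta$ and $\overline{\theta}_1 = \theta_1^{-1}$ lie in the same $W$-orbit (here $\theta_1^q = \theta$). Parametrizing $\hat T \simeq \mathbb{Z}/(q^{2f}-1)\mathbb{Z}$ by a generator $\chi_0$ and writing $\theta = \chi_0^a$, one computes $\overline{\theta}_1 = \chi_0^{-q^{-1}a}$, where $q^{-1} = q^{2f-1}$ denotes the inverse of $q$ modulo $q^{2f}-1$, so that the $W$-conjugacy condition becomes the existence of $j \in \mathbb{Z}$ with
\begin{equation*}
a\,(q^{2j+1}+1)\equiv 0\pmod{q^{2f}-1}.
\end{equation*}

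The hypothesis that $f$ is odd now enters essentially: choosing $2j+1 = f$ (so $j=(f-1)/2 \in \mathbb{Z}$) reduces the congruence to $a(q^f+1) \equiv 0 \pmod{q^{2f}-1}$, which is solved by $a = q^f - 1$ since $(q^f-1)(q^f+1) = q^{2f}-1$. It remains to verify that $\theta = \chi_0^{q^f-1}$ is regular, i.e., that its $W$-orbit has the full size $f$. Its order is exactly $q^f+1$, and since $q^f \equiv -1 \pmod{q^f+1}$ forces the multiplicative order of $q$ modulo $q^f+1$ to be $2f$, the order of $q^2$ modulo $q^f+1$ is $f$, which yields the regularity; the corresponding $\sigma_0 = \pm R_{T,\theta}$ is then the desired self-dual irreducible cuspidal. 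I expect the main technical care to lie in correctly pinning down the rational Weyl-group action on $\hat T$ as $\theta \mapsto \theta^{q^2}$ (rather than $\theta \mapsto \theta^q$, since $G$ is defined over $k = \mathbb{F}_{q^2}$ rather than $\mathbb{F}_q$); once that point is settled, everything else is a direct manipulation of congruences modulo $q^{2f}-1$.
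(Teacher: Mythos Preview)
Your proposal is correct and follows essentially the same route as the paper: both reduce via Proposition~5.4.2 and the orthogonality relations to finding a regular $\theta$ with $\theta^{q^{2\ell+1}+1}=1$ for some $\ell$, take $\theta=\chi_0^{q^f-1}$ (the paper writes $\xi$ for your $\chi_0$) and $2\ell+1=f$, and then verify regularity. Your regularity argument via the multiplicative order of $q$ modulo $q^f+1$ is more explicit than the paper's, which simply asserts that one checks $\theta^{q^{2i}}\neq\theta$ for $1\le i\le f-1$ directly.
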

\begin{proof}
Let $\Vec{T}$ be a minisotropic maximal $k$-torus of $\Vec{G}$, and $\theta$ be a regular character of $T = \Vec{T}(k)$ such that $\chi_{\sigma} = \pm R_{T,\theta}$.
We have $\sigma_0 \simeq \sigma_0^*$ if and only if $\chi_{\sigma_0} = \chi_{\sigma_0^*}$. Thus it follows from Proposition 5.4.2 that $\sigma_0 \simeq \sigma_0^*$ is equivalent to $R_{T,\theta} = R_{T,\overline{\theta}_1}$, where $\theta_1^q = \theta$. By the orthogonality relations for $R_{T,\theta}$, the last condition is equivalent to the condition that there is a non-negative integer $\ell$ such that $\theta^{q^{2\ell}} = \overline{\theta}_1$, that is, $\theta^{q^{2\ell+1}} = \theta^{-1}$.

Let $\xi$ be a generator of $\hat{T} \simeq k_f^\times$. Take $\theta = \xi^{q^f-1}$ in $\hat{T}$. Then we have $\theta^{q^f+1} = (\xi^{q^f-1})^{q^f+1} = \xi^{q^{2f}-1} =1$. Further we can show directly that $\theta^{q^{2i}} \ne \theta$ for any integer $i$, $1 \le i \le f-1$, that is, $\theta$ is regular. Hence the proof is completed.
\end{proof}

\subsection{}            

We moreover study the $G$-intertwining of a simple type $(J_-(\beta, \mathfrak{A}),\lambda_-)$ in $G$.

\begin{prop}[cf. (5.5.11) of \cite{BK1}]         
Let $[\mathfrak{A},n,0,\beta]$ be a good skew simple stratum in $A$, with $\mathfrak{A} = \mathfrak{A}(\mathcal{L})$ principal, and $\lambda_- = \kappa_- \otimes \sigma_-$ a simple type in $G$ attached to $[\mathfrak{A},n,0,\beta]$. Then we have
\begin{equation*}
I_G(\lambda_-) \subset J_-(\beta,\mathfrak{A})\Vec{W}(\mathfrak{B})J_-(\beta,\mathfrak{A}).
\end{equation*}
\end{prop}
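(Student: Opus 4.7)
The plan is to invoke Proposition 5.3.2 together with the Iwahori-Bruhat decomposition (5.2.1), so as to reduce the statement to showing that any element $w \in \Vec{W}$ which intertwines $\sigma_- \vert \Vec{U}(\mathfrak{B})$ lies, modulo left and right multiplication by $\Vec{U}(\mathfrak{B})$, in the subgroup $\Vec{W}(\mathfrak{B})$. Proposition 5.3.2 gives $I_G(\lambda_-) = J_- \cdot I_{B^\times \cap G}(\sigma_- \vert \Vec{U}(\mathfrak{B})) \cdot J_-$, and since $\Vec{U}(\mathfrak{B}) \subset J_-$, it suffices to prove the inclusion $I_{B^\times \cap G}(\sigma_- \vert \Vec{U}(\mathfrak{B})) \subset \Vec{U}(\mathfrak{B})\Vec{W}(\mathfrak{B})\Vec{U}(\mathfrak{B})$. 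The Iwahori-Bruhat decomposition then writes any element of $B^\times \cap G$ in the form $u_1 w u_2$ with $u_i \in \Vec{U}(\mathfrak{B}_m) \subset \Vec{U}(\mathfrak{B})$ and $w \in \Vec{W}$, reducing the task to analyzing Weyl group representatives.

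Next, I would pass to the finite reductive quotient $\Vec{U}(\mathfrak{B})/\Vec{U}^1(\mathfrak{B})$, which by Proposition 2.2.5 is $\prod_{i=1}^{e/2} \mathrm{Aut}_{k_E}(\overline{V}^i)$ when $e$ is even, and $\prod_{i=1}^{(e-1)/2} \mathrm{Aut}_{k_E}(\overline{V}^i) \times \Vec{U}(\overline{V}_t,\overline{h}_t)$ when $e$ is odd. On this quotient $\sigma_-$ is an external tensor product of irreducible cuspidal representations $\sigma_0$ (and, if $e$ is odd, $\sigma_1$). Via Mackey's theorem, an element $w$ intertwining $\sigma_- \vert \Vec{U}(\mathfrak{B})$ forces the image of $\Vec{U}(\mathfrak{B}) \cap w\Vec{U}(\mathfrak{B})w^{-1}$ in this finite quotient to carry a non-zero intertwiner between $\sigma_-$ and $\sigma_-^w$. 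By the cuspidality of each tensor factor, the standard Jacquet-module vanishing forces this image to coincide with a Levi and not to contain the unipotent radical of any proper parabolic, which in turn forces $w$ to normalize the block decomposition $\widetilde{\Vec{\mathfrak{M}}}(\mathfrak{B})^\times$.

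Finally, lifting via the semidirect decomposition $\Vec{W} = \Vec{D} \rtimes \Vec{W}_0$ from Proposition 5.1.1, the condition that $w$ normalize $\widetilde{\Vec{\mathfrak{M}}}(\mathfrak{B})^\times$ is precisely the defining condition of $\Vec{W}(\mathfrak{B}) = \Vec{D}(\mathfrak{B}) \rtimes \Vec{W}_0(\mathfrak{B})$ recorded at the end of section 5.2, concluding the argument. The hard part will be the second step: when $e$ is odd, the finite quotient involves the non-split unitary factor $\Vec{U}(\overline{V}_t,\overline{h}_t)$ of type $\mathrm{A}^2$, so the reduction of the intertwining condition to Jacquet-module vanishing must be performed carefully in a mixed product of general linear and finite unitary groups. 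One must verify that for any $w \in \Vec{W}$ not in $\Vec{U}(\mathfrak{B})\Vec{W}(\mathfrak{B})\Vec{U}(\mathfrak{B})$, the image of $\Vec{U}(\mathfrak{B}) \cap w\Vec{U}(\mathfrak{B})w^{-1}$ in $\Vec{U}(\mathfrak{B})/\Vec{U}^1(\mathfrak{B})$ really is supported in a proper parabolic of at least one of the factors, so that cuspidality of $\sigma_0$ or $\sigma_1$ can indeed be invoked to yield a contradiction.
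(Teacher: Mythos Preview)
Your reduction is exactly the paper's: Proposition 5.3.2 plus the Iwahori--Bruhat decomposition (5.2.1) reduce everything to Lemma 5.5.2, namely that any $w\in\Vec{W}$ intertwining $\sigma_-\vert\Vec{U}(\mathfrak{B})$ must normalize $\widetilde{\Vec{\mathfrak{M}}}(\mathfrak{B})^\times$, hence lie in $\Vec{W}(\mathfrak{B})$. You also correctly identify this as the hard step and correctly anticipate that the contradiction must come from cuspidality of $\sigma_0$ (and $\sigma_1$ when $e$ is odd).

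Where your sketch diverges from what is actually needed is in the mechanism of that contradiction. You propose to show that the image of $\Vec{U}(\mathfrak{B})\cap w\Vec{U}(\mathfrak{B})w^{-1}$ in $\Vec{U}(\mathfrak{B})/\Vec{U}^1(\mathfrak{B})$ is contained in a proper parabolic. But containment in a proper parabolic does not by itself kill the intertwining; what one needs is a supply of elements $h\in\Vec{U}(\mathfrak{B})$ whose conjugates $w^{-1}hw$ land in $\Vec{U}^1(\mathfrak{B})$ (so that $\sigma_-$ is trivial on that side of the intertwining relation), while the images of the $h$ themselves in $\Vec{U}(\mathfrak{B})/\Vec{U}^1(\mathfrak{B})$ sweep out the full unipotent radical of a proper parabolic in one of the factors. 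Only then does $\sigma_-(h)\circ T=T$ force $T$ to factor through $N$-coinvariants and vanish. Since $w$ lies in the \emph{affine} Weyl group, there is no direct finite-group Mackey/Howlett--Lehrer reduction available; producing such $h$ is genuinely a lattice-theoretic problem. The paper carries this out by first replacing $w$ by $uw$ for a suitable permutation $u$ (Lemmas 5.5.6--5.5.9), then building an auxiliary self-dual $\mathfrak{o}_E$-lattice chain $\overline{\mathcal{M}}$ in a subspace $W=V^i\oplus V^{\overline{i}}$ of period $2$ or $3$, and taking $h=C(x)$ for $x\in\mathfrak{n}\cap\overline{\mathfrak{Q}}$; the crucial inclusion $(uw)^{-1}x(uw)\in\mathfrak{Q}$ is Lemma 5.5.10, whose proof is a delicate case analysis (Lemmas 5.5.11--5.5.12). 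Your last paragraph acknowledges that something of this sort must be checked, but the formulation ``image supported in a proper parabolic'' is not the right target and would not, as stated, yield the contradiction.
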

\begin{proof}
If $g \in G$ intertwines $\lambda_-$, from Proposition 5.3.2, $g \in J_-yJ_-$ for some $y \in B^\times \cap G$ and $y$ intertwines $\sigma_- \vert \Vec{U}(\mathfrak{B})$. 
Since $J_-$ contains the Iwahori subgroup $\Vec{U}(\mathfrak{B}_m)$ of $B^\times \cap G$, by the Iwahori-Bruhat decomposition of (5.2.1), we may take $y \in \Vec{W}$. Thus the result follows from the following lemma, which is an analogue of \cite[(5.5.5)]{BK1}.
\end{proof}

\begin{lem}              
If $w \in \Vec{W}$ intertwines $\sigma_- \vert \Vec{U}(\mathfrak{B})$, then $w \in \Vec{W}(\mathfrak{B})$.
\end{lem}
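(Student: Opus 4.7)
The plan is to imitate Bushnell--Kutzko's proof of \cite[(5.5.5)]{BK1}. Any $w \in \Vec{W}$ decomposes as $w = d u$ with $d \in \Vec{D}$ and $u$ the image in $\Vec{W}_0 \simeq N_{\mathfrak{o}}/H_0$ of a signed permutation of the blocks $V^i$ compatible with the involutive pairing $i \leftrightarrow \overline{i} = e - i + 1$. The subgroup $\Vec{W}(\mathfrak{B}) = \Vec{D}(\mathfrak{B}) \rtimes \Vec{W}_0(\mathfrak{B})$ is precisely the $\Vec{W}$-stabiliser of the block-diagonal subring $\widetilde{\Vec{\mathfrak{M}}}(\mathfrak{B})$, i.e.\ those $w$ whose $\Vec{D}$-part preserves each $V^i$ and whose $\Vec{W}_0$-part permutes the unordered set of pairs $\{V^i, V^{\overline{i}}\}$ (including the self-paired middle block when $e$ is odd). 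Since $\sigma_-$ is built symmetrically from the same cuspidal $\sigma_0$ on every $GL(f,k_E)$-factor together with $\sigma_1$ on the $U(f,k_{E_0})$-factor, every $w \in \Vec{W}(\mathfrak{B})$ already satisfies $(\sigma_-)^w \simeq \sigma_-$; so it suffices to show that no $w \notin \Vec{W}(\mathfrak{B})$ intertwines $\sigma_-|_{\Vec{U}(\mathfrak{B})}$.

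Under that assumption, I would analyse the image of $\Vec{U}(\mathfrak{B}) \cap w \Vec{U}(\mathfrak{B}) w^{-1}$ in the reductive quotient
\[
\Vec{U}(\mathfrak{B})/\Vec{U}^1(\mathfrak{B}) \;\simeq\; \prod_{i=1}^{[e/2]} GL(f,k_E) \;\times\; U(f,k_{E_0})
\]
(the last factor being present only when $e$ is odd), furnished by Proposition 2.2.5. Using the explicit generators $n_{\Vec{s}_i}$ of $\Vec{W}_0$ together with the translation elements of $\Vec{D}$, the claim I want to verify, by a direct matrix computation paralleling the one in the proof of \cite[(5.5.5)]{BK1}, is that if $w$ does not normalise $\widetilde{\Vec{\mathfrak{M}}}(\mathfrak{B})^\times$, then this reductive-quotient image sits inside a proper parabolic subgroup $\mathcal{P}$ of one of the factors, and moreover the full unipotent radical $\mathcal{U}$ of $\mathcal{P}$ is contained in the image. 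The key computational fact is that the off-diagonal block-moves forced by $w \notin \Vec{W}(\mathfrak{B})$ push the relevant matrix entries into $\mathfrak{Q}$, where upon reduction modulo $\Vec{U}^1(\mathfrak{B})$ they generate exactly such a $\mathcal{U}$.

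Once $\mathcal{U}$ is identified, the proof finishes by cuspidality. A non-zero operator in $\mathrm{Hom}_{\Vec{U}(\mathfrak{B}) \cap w\Vec{U}(\mathfrak{B})w^{-1}}(\sigma_-, \sigma_-^w)$ restricts to a non-zero operator intertwining the pull-back of $\sigma_-$ along the trivial character of $\mathcal{U}$ with a similar pull-back of $\sigma_-^w$, hence yields a non-zero $\mathcal{U}$-fixed vector in the tensor factor of $\sigma_-$ supported on the affected $GL(f,k_E)$ or $U(f,k_{E_0})$ component. But the components $\sigma_0$ and $\sigma_1$ are cuspidal (Definition 5.3.3 and the subsequent remark), so their restrictions to the unipotent radical of any proper parabolic have no non-zero invariant vectors. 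This contradiction shows $w \in \Vec{W}(\mathfrak{B})$ and proves the lemma.

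The main obstacle I anticipate is the bookkeeping at the middle factor when $e$ is odd. The generator $n_{\Vec{s}_{[e/2]}}$ mixes $V^{[e/2]}$ and $V^{\overline{[e/2]}}$ in a way that interacts non-trivially with the involution $^-$, and one must verify that the resulting candidate unipotent subgroup really is the unipotent radical of a proper parabolic of the unitary factor $U(f,k_{E_0})$, rather than merely of some auxiliary $GL(f,k_E)$ sitting inside. This requires a careful matrix argument based on the explicit formula for $n_{\Vec{s}_{[e/2]}}$ and on the form $\widetilde{h}_\beta$. Modulo this verification, the BK1 argument transfers without essential change.
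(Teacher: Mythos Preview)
Your overall strategy is correct and matches the paper's approach: assume $w$ does not normalise $\widetilde{\Vec{\mathfrak{M}}}(\mathfrak{B})^\times$, produce elements whose $w$-conjugates lie in $\Vec{U}^1(\mathfrak{B})$ but which themselves project onto the unipotent radical of a proper parabolic in one factor of the reductive quotient, and then invoke cuspidality of $\sigma_0$ or $\sigma_1$.

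However, you substantially underestimate where the difficulty lies, and your claim that ``the BK1 argument transfers without essential change'' is too optimistic. The paper itself flags this lemma as ``hard to prove,'' and its proof occupies most of \S5.5. Two points in particular:

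\textbf{(1)} The obstacle is not localised at the middle factor. The paper first isolates an index $i$ (Lemma 5.5.6) at which $w$ fails to act block-diagonally on the lattices $L_k^i$, and the ensuing analysis is already delicate when $i\neq\overline{i}$. One must introduce an auxiliary permutation $u$ (a product of within-block transpositions $\tau_{j\ell}$, and in the $i=\overline{i}$ case also sign-changes $\tau_j$) to reorder the basis so that the exponents $\mu(j,k)$ are monotone; only after this normalisation can one write down the candidate unipotent elements. Your sketch omits this $u$, which is essential to make the parabolic visible.

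\textbf{(2)} The ``direct matrix computation'' you envisage is replaced in the paper by the construction of an auxiliary self-dual $\mathfrak{o}_E$-lattice chain $\overline{\mathcal{M}}$ in $W=V^i\oplus V^{\overline{i}}$ of period $2$ or $3$, together with a nilpotent subspace $\mathfrak{n}$; the crux is Lemma 5.5.10, asserting the existence of an integer $\ell$ with $x(uwL_{\kappa+1}\cap W)\subset uwL_{\kappa+\ell+1}\cap W$ and $x(uwL_{\kappa+\ell+1}\cap W)\subset\varpi_E(uwL_{\kappa+1}\cap W)$ for all $x\in\mathfrak{n}\cap\overline{\mathfrak{Q}}$. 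This in turn requires a case split (Lemmas 5.5.9, 5.5.11, 5.5.12) comparing the exponents $b_j,b_j'$ and locating the jump of $m(\cdot,k)$. None of this is a routine transcription of \cite[(5.5.5)]{BK1}; the self-duality constraint forces the two halves of each block pair to move in opposite directions, and controlling both simultaneously is exactly what makes the argument longer here.

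In short: right plan, but the execution demands the auxiliary constructions $(u,\overline{\mathcal{M}},\ell)$ and the case analysis of Lemmas 5.5.9--5.5.12, which you should not expect to avoid.
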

\begin{proof}
It is hard to prove this lemma (cf. \cite[(5.5.5)]{BK1}).

It follows from the argument in 5.2 that the $\Vec{W}$-normalizer of $\widetilde{\mathfrak{M}}(\mathfrak{B})^\times$ is equal to $\Vec{W}(\mathfrak{B}) = \Vec{D}(\mathfrak{B}) \rtimes \Vec{W}_0(\mathfrak{B})$. Thus, if $w \in \Vec{W}$ intertwines $\sigma_- \vert \Vec{U}(\mathfrak{B})$, it is enough to prove that $w$ normalizes $\widetilde{\Vec{\mathfrak{M}}}(\mathfrak{B})^\times$.

We now assume that $w \in \Vec{W}$ does not normalize $\widetilde{\Vec{\mathfrak{M}}}(\mathfrak{B})^\times$. Put $\mathcal{L}_{\mathfrak{o}_E} = \{L_k \mid k \in \mathbb{Z}\}$ with $L_0^\natural = L_0$. 
Let $V = \bigoplus_{i=1}^e V^i$ be the $E$-decomposition of $V$ subordinated to $\mathcal{L}_{\mathfrak{o}_E}$, $L_k = \coprod_{i=1}^e L_k^i$, $L_k^i = L_k \cap V^i$, for $k \in \mathbb{Z}$, $\mathcal{V} = \{v_1, v_2, \cdots, v_R\}$ and $\mathcal{V} = \coprod_{i=1}^e \mathcal{V}^i$ be as in Lemma 2.2.3. Let $L_k \in \mathcal{L}$. Then for each integer $i$, $1 \le i \le e$, there is an integer $m(i,k)$ such that
\begin{equation*}
L_k \cap V^i = L_k^i = \mathfrak{p}_E^{m(i,k)}<\mathcal{V}^i>.
\end{equation*}
We denote this lattice by $<\mathfrak{p}_E^{m(i,k)}>^i$. Thus we have
\begin{equation}
L_k = \displaystyle \bigoplus_{i=1}^e L_k^i = \displaystyle \bigoplus_{i=1}^e <\mathfrak{p}_E^{m(i,k)}>^i
\end{equation}

We prepare the following three lemmas.

\begin{lem}     
The function $m(i,k)$ on $\{1, \cdots, e\} \times \mathbb{Z}$ satisfies the following conditions:
\begin{enumerate}
 \item $m(1,0) = m(2,0) = \cdots = m(e,0) = 0$, \
 \item $m(1,k) \le m(2,k) \le \cdots \le m(e,k) \le m(1,k) + 1$, for $k \in \mathbb{Z}$, and precisely one of these inequalities is strict,\
 \item for each $i$, $m(i,k)$ jumps at $k$, with $k \equiv -i\ (\mathrm{mod}\ e)$, that is, $m(i,k+1) = m(i,k) + 1$.
\end{enumerate}
\end{lem}
\begin{proof}
Straightforward.
\end{proof}

\begin{lem}     
Let $w \in \Vec{W}$. Then, for each integer $j$, $1 \le j \le R/2$, there are integers $d_j$ and $k = k(j)$, determined uniquely by $j$, such that
\begin{equation*}
w(\mathfrak{o}_Ev_j) = \mathfrak{p}_E^{d_j}v_k,\ w(\mathfrak{o}_Ev_{R-j+1}) = \mathfrak{p}_E^{-d_j}v_{R-k+1}.
\end{equation*}
\end{lem}
\begin{proof}
This follows straightforward by the definition of $\Vec{W}$ in 5.1.
\end{proof}

We recall $\overline{i} = e -i + 1$, for $i \in \{1, 2, \cdots, e\}$, defined by (5.2.2).

\begin{lem}     
Let $w \in \Vec{W}$. The element $w$ permutes $\{L_k^i \vert i \in \{1, 2, \cdots, e\}, k \in \mathbb{Z}\}$ if and only if for each $L_k^i = <\mathfrak{p}_E^{m(i,k)}>^i,\ L_k^{\overline{i}} = <\mathfrak{p}_E^{m(\overline{i},k)}>^{\overline{i}}$, there are integers $\delta_i, j, k', k^{\prime\prime}$ such that
\begin{equation*}
w(L_k^i) = L_{k'}^j = <\mathfrak{p}_E^{m(i,k)+\delta_i}>^j,\ w(L_k^{\overline{i}}) = L_{k^{\prime\prime}}^{\overline{j}} = <\mathfrak{p}_E^{m(\overline{i},k)-\delta_i}>^{\overline{j}}.
\end{equation*}
\end{lem}
\begin{proof}
This follows directly from Lemma 5.5.4.
\end{proof}

By Lemma 5.5.5, we may assume that the element $w$ does not permute $\{L_k^i\}$ as in the proof of \cite[(5.5.5)]{BK1}.

For $i \in \{1, \cdots, e\}$ and $j \in \{1, \cdots, f\}$, let the basis $\mathcal{V}^i = \{v_j^i\}$ to be as in (5.2.3), and define an integer $\nu(i,j)$ in $\{1, \cdots, e\}$ by
\begin{equation*}
w^{-1}(v_j^i) \in V^{\nu(i,j)}.
\end{equation*}
Let $k$ be any integer, and $L_k$ be the lattice in $\mathcal{L}$ of the form (5.5.1). Then $wL_k \cap Ev_j^i \subset w(L_k \cap V^{\nu(i,j)})$, and from Lemma 5.5.4, there is an integer $d_j^i$ such that
\begin{equation*}
wL_k \cap Ev_j^i = \mathfrak{p}_E^{m(\nu(i,j),k)+d_j^i}v_j^i.
\end{equation*}
We remark that the integers $\nu(i,j)$ and $d_j^i$ depend on the element $w$ of $\Vec{W}$, but they do not depend on $k$ of $L_k$. 

Let $i$ be an integer with $1 \le i \le [(e+1)/2]$. Then, for each integer $k$, we have
\begin{equation*}
wL_k \cap (V^i + V^{\overline{i}}) = (wL_k \cap V^i) + (wL_k \cap V^{\overline{i}}).
\end{equation*}
If $i \ne \overline{i}$, then, again by Lemma 5.5.4, we have $w^{-1}(v_j^{\overline{i}}) \in V^{\overline{\nu(i,j)}}$, so that $\nu(\overline{i},j) = \overline{\nu(i,j)}$, and similarly $d_j^{\overline{i}} = - d_j^i$. If $i = \overline{i}$, then we have $\nu(i,f-j+1) = \overline{\nu(i,j)}$ and $d_{f-j+1}^i = - d_j^i$ as well. We put
\begin{equation*}
f' = 
 \begin{cases}
 f \hspace{6mm} \text{if}\ i \ne \overline{i},\\
 f/2 \hspace{3mm} \text{if}\ i = \overline{i},
 \end{cases}
\end{equation*}
and for each $j \in \{1, \cdots, f'\}$, rewrite
\begin{equation*}
v_{-j}^i =
 \begin{cases}
 v_j^{\overline{i}} \hspace{13mm} \text{if}\ i \ne \overline{i},\\
 v_{2f'-j+1}^i \hspace{3mm} \text{if}\ i = \overline{i}.
 \end{cases}
\end{equation*}
Then $\{v_j^i,\ v_{-j}^i\ \vert\ j \in \{1, \cdots, f'\}\}$ form a basis of $V^i + V^{\overline{i}}$, and for each integer $k$, we have
\begin{equation}
wL_k \cap (V^i + V^{\overline{i}}) = \displaystyle \sum_{j=1}^{f'} \mathfrak{p}_E^{m(\nu(i,j),k)+d^i_j}v_j^i + \displaystyle \sum_{j=1}^{f'} \mathfrak{p}_E^{m(\overline{\nu(i,j)},k)-d^i_j}v_{-j}^i.
\end{equation}

\begin{lem}    
There is an integer $i$, $1 \le i \le [(e+1)/2]$, which satisfies the condition: ``not $\nu(i,1) = \cdots = \nu(i,f)$ or not $d_1^i = \cdots = d_f^i"$.
\end{lem}
\begin{proof}
Suppose that there is no integer $i$ as in the assertion. Then, for $i = \overline{i} = (e+1)/2$, we have $\nu(i,1) = \cdots = \nu(i,f') = (e+1)/2$ and $d_1^i = \cdots = d_{f'}^i = 0$, so that $w(L_k^i) = L_k^i$, for $k \in \mathbb{Z}$.
For $i$, with $i \ne \overline{i}$, put $\nu = \nu(i,1) = \cdots = \nu(i,f')$ and $d = d_1^i = \cdots = d_{f'}^i$. For each integer $k$, it follows from the above argument that
\begin{equation*}
wL_k^\nu = wL_k \cap V^i = <\mathfrak{p}_E^{m(\nu,k)+d}>^i,
\end{equation*}
whence, by Lemma 5.5.3, we have $wL_k^\nu = <\mathfrak{p}_E^{m(i,\ell)}>^i = L_{\ell}^i$ for some integer $\ell$. Hence the element $w$ permutes $\{L_k^i\}$, which contradicts the assumption on $w$. The proof is completed.
\end{proof}

We fix such an integer $i$ as in Lemma 5.5.6, and for each $j \in \{1, \cdots, f'\}$, write $\mu(j), d_j$, and $v_j$ for $\nu(i,j), d_j^i$, and $v_j^i$, respectively. Put $W = V^i + V^{\overline{i}}$, and
\begin{equation*}
W_+ = \displaystyle \sum_{j=1}^{f'} Ev_j,\ W_- = \displaystyle \sum_{j=1}^{f'} Ev_{-j}.
\end{equation*}
Then we have $W = W_+ \oplus W_-$, and $W_+$ and $W_-$ are both maximal totally isotropic subspaces of $W$ with respect to $\widetilde{h}_\beta \vert W$.

\begin{rems}   
(i) In case $i = \overline{i}$, the condition in Lemma 5.5.6 is divided into the following two cases:
\begin{enumerate}
 \item not $\nu(1) = \cdots = \nu(f')$ or not $d_1 = \cdots = d_{f'}$,\
 \item $\nu(1) = \cdots = \nu(f'), d_1 = \cdots = d_{f'}$, and $``\nu(f') \ne \overline{\nu(1)}$ or $d_1 \ne 0"$.
\end{enumerate}
(ii) In case $i \ne \overline{i}$, it is nothing but (1) above, since $f' = f$.
\end{rems}

For $wL_k \cap W$ of (5.5.2), put
\begin{equation*}
M = \{(\nu(j),d_j),\ (\overline{\nu(j)},-d_j)\ \vert\ j \in \{1, \cdots, f'\}\},
\end{equation*}
where the $(\nu(j),d_j)$ do not depend on $k$ of $L_k$ as remarked above.
We define a linear order, $\prec$, on the set $M$ by $(\nu',d') \prec (\nu,d)$ if and only if either ``\ $d' < d$ \ " or ``\ $d' = d$ and $\nu' < \nu$ \ ".

\begin{lem}   
If elements $(\nu,d)$ and $(\nu',d')$ in $M_i$ satisfy $(\nu',d') \prec (\nu,d)$, then $m(\nu',k)+d' \le m(\nu,k)+d$ and $m(\overline{\nu},k)-d \le  m(\overline{\nu'},k)-d'$, for any integer $k$.
\end{lem}
\begin{proof}
This follows directly from Lemma 5.5.3 (2).
\end{proof}

Denote by $\tau_{j\ell}$ the product of the transposition of $v_j$ and $v_\ell$ in $\mathcal{V}^i$ with that of $v_{-j}$ and $v_{-\ell}$ in $\mathcal{V}^{\overline{i}}$. 
By Lemma 5.5.8, multiplying an element $u$ which is a product of appropriate $\tau_{j\ell}$'s, we can permute $\{v_1, \cdots, v_{f'}\}$ (so $\{v_{-1}, \cdots, v_{-f'}\}$) so as to have
\begin{equation*}
uwL_k \cap W = \displaystyle \sum_{j=1}^{f'} \mathfrak{p}_E^{\mu(j,k)}v_j + \displaystyle \sum_{j=1}^{f'} \mathfrak{p}_E^{\mu'(j,k)}v_{-j},
\end{equation*}
with
\begin{equation*}
\mu(1,k) \le \cdots \le \mu(f',k),\ \mu'(f',k) \le \cdots \le \mu'(1,k).\,
\end{equation*}
for each $k$.

Let $(\nu_0,d_0)$ be the maximal element in the set $M$ with respect to the order $\prec$. Then we have $d_0 \ge 0$, and $\mu(f',k) = m(\nu_0,k) + d_0$ or $\mu'(1,k) = m(\nu_0,k) + d_0$. We may assume
\begin{equation*}
\mu(f',k) = m(\nu_0,k) + d_0,
\end{equation*}
up to the transposition of $W_+$ and $W_-$. Put
\begin{equation*}
\kappa = e - \nu_0,
\end{equation*}
and for $uwL_\kappa \cap W$ and $uwL_{\kappa+1} \cap W$, write
\begin{equation*}
a_j = \mu(j,\kappa), a_j' = \mu'(j,\kappa); b_j = \mu(j,\kappa+1), b_j' = \mu'(j,\kappa+1)
\end{equation*}
for $j \in \{1, \cdots, f'\}$. Then, from the choice of $\kappa$, we have
\begin{equation}
m(1,\kappa) = \cdots = m(\nu_0,\kappa) = 0, m(\nu_0+1,\kappa) = \cdots = m(e,\kappa) =1
\end{equation}
and $m(\nu_0,\kappa+1) = 1.$ Thus, by definition, we have
\begin{center}
$a_{f'} = \mu(f',\kappa) = m(\nu_0,\kappa) + d_0 = d_0$,\
$b_{f'} = \mu(f',\kappa+1) = m(\nu_0,\kappa+1) + d_0 = 1 + d_0 = a_{f'} + 1.$
\end{center}
This implies $uwL_\kappa \cap W \supsetneq uwL_{\kappa+1} \cap W$.

\begin{lem}   
(1) In case $i \ne \overline{i}$, there is an integer $s$, $1 \le s \le f'$, such that $b_1 \le \cdots \le b_s < b_{s+1} = \cdots = b_{f'}$.

(2) In case $i = \overline{i}$, we can replace the element $u$ of $\Vec{W}$ so that there is an integer $s$, $0 \le s \le f'$, such that $b_1 \le \cdots \le b_s < b_{s+1} = \cdots = b_{f'}$ and $b_1' < b_{s+1}$. In particular, if $s = 0$, then $b_1 = \cdots = b_{f'} > b_{f'}' = \cdots = b_1'$.
\end{lem}
\begin{proof}
We first assume (1) not $\nu(1) = \cdots = \nu(f')$ or not $d_1 = \cdots = d_{f'}$ in Remarks 5.5.7. Then there is an integer $s$, $1 \le s \le f'$, which satisfies $b_1 \le \cdots \le b_s < b_{s+1} = \cdots = b_{f'}$. For, if not all the $\nu(j)$ are equal, then there is some $s$ such that $a_s = b_s$. 
Thus the maximal one of these is the desired. If all the $\nu(j)$ are equal, not all the $d_j$ are equal. Thus, if $a_s < a_{f'}$, then $b_s \le a_s + 1 < a_{f'} + 1 = b_{f'}$. Hence, similarly, we get $s$ as claimed. If $i \ne \overline{i}$, then, since the assumption (1) is satisfied, the assertion (1) is proved. 

So, let $i = \overline{i}$. Denote by $\tau_j$ the transposition of $v_j$ and $v_{-j}$. If we have $b_{s+1} = b_{f'} = b_1'$, we can replace $u$ by the product of appropriate $\tau_{j\ell}$'s and $\tau_m$'s 
so that $b_{f'}' \le \cdots \le b_1' < b_{s+1}$. Then we have $0 \le s \le f'$ and $b_1 \le \cdots \le b_s < b_{s+1} = \cdots = b_{f'}$ as the assertion says.

We next assume (2) $\nu(1) = \cdots = \nu(f'), d_1 = \cdots = d_{f'}$, and ``$\nu(f') \ne \overline{\nu(1)}$ or $d_1 \ne 0$" in Remarks 5.5.7. Then similarly we can replace $u$ 
so that $\mu(1,k) = \cdots = \mu(f',k) > \mu'(f',k) = \cdots = \mu'(1,k)$, for any integer $k$. In particular, for $k = \kappa + 1$, $b_1 = \cdots = b_{f'} > b_{f'}' = \cdots = b_1'$. The proof is completed.
\end{proof}

Via the integer $s$ in Lemma 5.5.9, we decompose the spaces $W_+$ and $W_-$ into
\begin{equation*}
W_+ = W_1 \oplus W_2,\ W_- = W_2^{\natural} \oplus W_1^{\natural}
\end{equation*}
by setting
\begin{equation*}
W_1 = \displaystyle \sum_{j=1}^s Ev_j,\ W_2 = \displaystyle \sum_{j=s+1}^{f'} Ev_j
,\ W_2^{\natural} = \displaystyle \sum_{j=s+1}^{f'} Ev_{-j},\ W_1^{\natural} = \displaystyle \sum_{j=1}^s Ev_{-j}.
\end{equation*}
Here, if $s = 0$, we understand $W_1 = W_1^{\natural} = (0)$. Then we have $W = W_2 \oplus (W_1^\natural \oplus W_1) \oplus W_2$. We produce a self-dual $\mathfrak{o}_E$-lattice chain in $W$ of $\mathfrak{o}_E$-period equal to 2 or 3. We first define $\mathfrak{o}_E$-lattices in $W_+$ by
\begin{equation*}
\overline{L}_0 = \displaystyle \sum_{j=1}^{f'} \mathfrak{o}_Ev_j \supsetneq \overline{L}_1 = \displaystyle \sum_{j=1}^{s}\mathfrak{o}_Ev_j + \displaystyle \sum_{j=s+1}^{f'}\mathfrak{p}_Ev_j \supsetneq \varpi_E\overline{L}_0,
\end{equation*}
and in $W_-$
\begin{equation*}
\overline{L}_0^\natural = \displaystyle \sum_{j=1}^{f'} \mathfrak{o}_Ev_{-j} \supsetneq \varpi_E\overline{L}_1^\natural = \displaystyle \sum_{j=s+1}^{f'} \mathfrak{o}_Ev_{-j} + \displaystyle \sum_{j=1}^s\mathfrak{p}_Ev_{-j} \supsetneq \varpi_E\overline{L}_0^\natural.
\end{equation*}
Multiplying these $\mathfrak{o}_E$-lattices by $\varpi_E^m$, $m \in \mathbb{Z}$, we obtain an $\mathfrak{o}_E$-lattice chain, $\overline{\mathcal{L}}$, in $V^i$. Further, in $W$, we define
\begin{equation*}
M_0 = \overline{L}_0^\natural \oplus \overline{L}_0,\ M_1 = \overline{L}_0^\natural \oplus \varpi_E\overline{L}_1,\ M_2 = \varpi_E\overline{L}_1^\natural \oplus \varpi_E\overline{L}_0.
\end{equation*}
Then we have $M_0 \supsetneq M_1 \supset M_2 \supsetneq \varpi_E M_0$, and these $\mathfrak{o}_E$-lattices generate a self-dual $\mathfrak{o}_E$-lattice chain, $\overline{\mathcal{M}}$, in $W$. The $\mathfrak{o}_E$-period of $\overline{\mathcal{M}}$ is equal to 3, if $s \ne 0$, and to 2, if $s = 0$.

Let $\overline{\mathfrak{B}} = \mathrm{End}_{\mathfrak{o}_E}^0(\overline{\mathcal{M}})$ be the hereditary $\mathfrak{o}_E$-order in $\mathrm{End}_E(W)$ defined by $\overline{\mathcal{M}}$, 
and $\overline{\mathfrak{Q}}$ its Jacobson radical. In $\mathrm{End}_E(W) \cap \mathfrak{G}$, put
\begin{equation*}
\mathfrak{n} = \{\mathrm{Hom}_E(W_1^\natural \oplus W_1 \oplus W_2, W_2^\natural) \coprod \mathrm{Hom}_E(W_2, W_1^\natural \oplus W_1)\} \cap \mathfrak{G},
\end{equation*}
if $i = \overline{i}$, and put
\begin{equation*}
\mathfrak{n} = \{\mathrm{Hom}_E(W_1^\natural, W_2^\natural) \coprod \mathrm{Hom}_E(W_2, W_1)\} \cap \mathfrak{G},
\end{equation*}
if $i \ne \overline{i}$. Take any element
\begin{equation*}
x \in \mathfrak{n} \cap \overline{\mathfrak{B}} = \mathfrak{n} \cap \overline{\mathfrak{Q}}.
\end{equation*}

\begin{lem}    
There is an integer $\ell$, with $0 \le \ell < e$, such that
\begin{eqnarray}
x(uwL_{\kappa+1} \cap W) &\subset& uwL_{\kappa+\ell+1} \cap W,\\
x(uwL_{\kappa+\ell+1} \cap W) &\subset& \varpi_E(uwL_{\kappa+1} \cap W).
\end{eqnarray}
\end{lem}

Since we have chosen the element $u \in \Vec{W}$ so as to have $b_1' \le b_{s+1}$, we have $b_1 \ge b_{s+1}'$ by Lemma 5.5.8. Thus $b_1 \ge b_{s+1}' \le b_s'$. To prove Lemma 5.5.10, we consider the following two cases:
\begin{enumerate}[{Case} 1.] 
 \item $b_{s+1}' < b_s'$, if $i \ne \overline{i}$, and $b_1 > b_{s+1}' < b_s'$, if $i = \overline{i}$,\
 \item $b_{s+1}' = b_s'$, if $i \ne \overline{i}$, and $b_1 = b_{s+1}'$ or $b_{s+1}' = b_s'$, if $i = \overline{i}$.
\end{enumerate}

In case 1, by definition, we see that $x(uwL_{\kappa+1} \cap W)$ is contained in
\begin{equation}
\begin{cases}
\displaystyle \sum_{j=s+1}^{f'} \mathfrak{p}_E^{b_s'}v_{-j} + \displaystyle \sum_{j=1}^s \mathfrak{p}_E^{b_{s+1}}v_j,\ \text{if}\ i \ne \overline{i},\\
\displaystyle \sum_{j=s+1}^{f'} \mathfrak{p}_E^{\mathrm{min}\{b_s',b_1\}}v_{-j} + \displaystyle \sum_{j=1}^s (\mathfrak{p}_E^{b_{s+1}}v_{-j} + \mathfrak{p}_E^{b_{s+1}}v_j),\ \text{if}\ i = \overline{i}
\end{cases}
\end{equation}
By Lemma 5.5.9, we have
\begin{eqnarray*}
b_{f'}' + 1 &\le& \cdots \le b_{s+1}' + 1 \le \mathrm{min}\{b_s',b_1\} \le b_s',\\
b_s' + 1 &\le& \cdots \le b_1' + 1 \le b_{s+1},\ \text{if}\ i = \overline{i},\\
b_1 + 1 &\le& \cdots \le b_s + 1 \le b_{s+1}.
\end{eqnarray*}
Hence we obtain $x(uwL_{\kappa+1} \cap W) \subset \varpi_E(uwL_{\kappa+1} \cap W)$, which is (5.5.5) with $\ell = 0$ in Lemma 5.5.10.

We consider case 2. For an integer $\ell$, $0 \le \ell < e$, put
\begin{equation*}
c_j = \mu(j,\kappa+\ell+1), c_j' = \mu'(j,\kappa+\ell+1)
\end{equation*}
for $j \in \{1, \cdots, f'\}$. Then we see that $x(uwL_{\kappa+\ell+1} \cap W)$ is contained in (5.5.6) in which $b_s', b_1$, and $b_{s+1}$ are replaced by $c_s', c_1$, and $c_{s+1}$, respectively. To prove (5.5.4), we have to prove the following inequalities:
\begin{enumerate} [({I}-1)]
 \item $c_{s+1}' \le b_s'$, if $i \ne \overline{i}$, and $c_{s+1}' \le \mathrm{min}\{b_1,b_s'\}$, if $i = \overline{i}$\
 \item $c_1' \le b_{s+1}$, if $i = \overline{i}$,\
 \item $c_s \le b_{s+1}$,
\end{enumerate}
and for (5.5.5),
\begin{enumerate}[({II}-1)]
 \item $b_{s+1}' < c_s'$, if $i \ne \overline{i}$, and $b_{s+1}' < \mathrm{min}\{c_1,c_s'\}$, if $i = \overline{i}$,\
 \item $b_1' < c_{s+1}$, if $i = \overline{i}$,\
 \item $b_s < c_{s+1}$.
\end{enumerate}
By Lemma 5.5.9, we easily obtain (I-2), (I-3), (II-2), and (II-3), for any integer $\ell$, $0 \le \ell <e$, in case 2. Thus it remains for us to prove that there is an integer $\ell$, $0 \le \ell <e$, such that (I-1) and (II-1) hold.

\begin{lem}    
If $b_{s+1}' = b_s'$, then there is an integer $\ell$, $0 \le \ell < e$, such that $c_{s+1}' = b_{s+1}'$ and $c_s' = b_s' + 1$.
\end{lem}
\begin{proof}
Put $b_s' = m(a,\kappa+1) + d$, for some integers $a$ and $d$. Then $b_s = m(\overline{a},\kappa+1) - d$. On the other hand, $b_{s+1} = b_{f'} = m(\nu_0,\kappa+1) + d_0 = 1 + d_0$ and $b_{s+1}' = m(\overline{\nu_0},\kappa+1) - d_0$. From $b_s < b_{s+1}$ and $b_{s+1}' = b_s'$, we easily get $\overline{\nu_0} < a.$ For, if $\overline{\nu_0} = a$, then $\overline{a} = \nu_0$. It follows that $b_s < b_{s+1}$ implies $-d < d_0$ and that $b_s' = b_{s+1}'$ implies $d = -d_0$. This is a contradiction. Thus, if $\nu_0 \le \overline{\nu_0}$, then $\nu_0 \le \overline{\nu_0} < a.$ On the other hand, if $\overline{\nu_0} < \nu_0$, then we have $a < \nu_0.$ For, suppose $\nu_0 \le a$. Then $\overline{a} \le \overline{\nu_0}$, so that $m(\overline{\nu_0},\kappa+1) = m(\overline{a},\kappa+1) = 0$ and $m(a,\kappa+1) = 1.$ 
Thus, again from the above condition, we obtain $-d < 1+d_0$ and $-d_0 = 1 + d.$ This is a contradiction. Hence we have obtained
\begin{equation*}
\begin{cases}
\nu_0 \le \overline{\nu_0} < a,\hspace{3mm} \text{if}\ \nu_0 \le \overline{\nu_0},\\
\overline{\nu_0} < a < \nu_0,\hspace{3mm} \text{if}\ \overline{\nu_0} < \nu_0.
\end{cases}
\end{equation*}
It follows from Lemma 5.5.3 that $m(a,k)$ jumps at $k = \kappa + \ell + 1$ for some integer $\ell$, $0 \le \ell < e$, and that $m(\overline{\nu_0},k)$ is constant for $\kappa + 1 \le k \le \kappa + \ell + 1$. Hence the assertion follows.
\end{proof}

If $i \ne \overline{i}$, for the integer $\ell$ of Lemma 5.5.11, we have
\begin{equation*}
c_{s+1}' = b_{s+1}' = b_s' < b_s' + 1 = c_s'.
\end{equation*}
Thus (I-1) and (II-1) hold. Hence, in case 2 with $i \ne \overline{i}$, the proof of Lemma 5.5.10 is completed.

We prove this lemma in case 2 with $i = \overline{i}$. Let $i = \overline{i}$, and $b_1 = b_{s+1}'$ or $b_{s+1}' = b_s'$.

\begin{lem}   
If $b_1 = b_{s+1}'$, then there is an integer $\ell$, $0 \le \ell < e$, such that $c_{s+1}' = b_{s+1}'$ and $c_1 = b_1 + 1$.
\end{lem}
\begin{proof}
The proof is quite similar to that of Lemma 5.5.11. We sketch the outline. Put $b_1 = m(a,\kappa+1) + d.$ Then $b_1' = m(\overline{a},\kappa+1) - d$. We have $b_{s+1} = 1 + d_0$ and $b_{s+1}' = m(\overline{\nu_0},\kappa+1) - d_0$. By Lemma 5.5.9 (2), we have $b_1 < b_{s+1}$ and $b_1 = b_{s+1}'$. Similarly, it follows that
\begin{equation*}
\begin{cases}
a \le \nu_0\ \text{or}\ \overline{\nu_0} < a,\hspace{3mm} \text{if}\ \nu_0 \le \overline{\nu_0},\\
\overline{\nu_0} < a \le \nu_0,\hspace{3mm} \text{if}\ \overline{\nu_0} < \nu_0.
\end{cases}
\end{equation*}
This shows the assertion.
\end{proof}

Denote by $\ell_1$ (resp. $\ell_2$) the integer $\ell$ in Lemma 5.5.11 (resp. Lemma 5.5.12). Put $\ell = \mathrm{max}\{\ell_1,\ell_2\}$. Then, for this $\ell$, we have $c_{s+1}' = b_{s+1}', c_s' = b_s' + 1$, and $c_1 = b_1 + 1$. Since $b_1 \ge b_{s+1}' \le b_s'$, we obtain $c_{s+1}' = b_{s+1}' \le \mathrm{min}\{b_1,b_s'\}$ (I-1). Further, $c_1 > b_1 \ge b_{s+1}' \le b_s' < c_s'$, so that $b_{s+1}' = c_{s+1}' < \mathrm{min}\{c_1,c_s'\}$ (II-1). Hence the proof of Lemma 5.5.10 is completed.

By Lemma 5.5.10, we have
\begin{equation}
(uw)^{-1}x(uw) \in \mathfrak{Q} = \mathrm{rad}(\mathfrak{B}),
\end{equation}
and by definition
\begin{equation}
u^{-1}xu \in u^{-1}\overline{\mathfrak{Q}}u = \mathrm{End}_{\mathfrak{o}_E}^1(u^{-1}\overline{\mathcal{M}})
\end{equation}
in $\mathrm{End}_E(W)$ as well.

Let $i =~ \overline{i}$. Then $u^{-1}\overline{\mathcal{M}}$ is a self-dual $\mathfrak{o}_E$-lattice chain in $W = V^i = V^{\overline{i}}$ of $\mathfrak{o}_E$-period equal to 2 or 3. Let $h = C(x) = (1 - \frac{1}{2}x)(1 + \frac{1}{2}x)^{-1}$ in $G$. Then from (5.5.7), we have $w^{-1}u^{-1}huw \in \Vec{U}^1(\mathfrak{B})$. Take an operator $T$ in $I_w(\sigma_- \vert \Vec{U}(\mathfrak{B}))$. Then it follows that
\begin{equation*}
\sigma_-(u^{-1}hu) \circ T = \sigma_-^w(w^{-1}u^{-1}huw) \circ T = T \circ \sigma_-(w^{-1}u^{-1}huw) = T.
\end{equation*}
In $B^i = \mathrm{End}_E(V^i)$, let $\mathfrak{B}^i = \mathrm{End}_{\mathfrak{o}_E}(\{L_k^i \vert k \in \mathbb{Z}\})$ and $\mathfrak{Q}^i$ its Jacobson radical. 
By the choice of the element $u$ of $\Vec{W}$, it follows from (5.5.8) that the set of $\{u^{-1}hu \vert h = C(x), x \in \mathfrak{n} \cap \overline{\mathfrak{Q}}\}$ projects onto the unipotent radical of a proper parabolic subgroup of $\Vec{U}(\mathfrak{B}^i)/\Vec{U}^1(\mathfrak{B}^i)$. 
Thus $\sigma_-(u^{-1}hu) \circ T = T$ above contradicts the cuspidality of $\sigma_1$. Hence the element $w$ never intertwines $\sigma_- \vert \Vec{U}(\mathfrak{B})$.

Let $i \ne \overline{i}$. Then $u^{-1}\overline{\mathcal{M}}$ is a self-dual $\mathfrak{o}_E$-chain in $W = V^i \oplus V^{\overline{i}}$ of $\mathfrak{o}_E$-period equal to 3. 
For the $\mathfrak{o}_E$-lattice chain $\overline{\mathcal{L}}$ in $V^i$ defined above, let $\overline{\mathfrak{B}}^i = \mathrm{End}_{\mathfrak{o}_E}^0(\overline{\mathcal{L}})$ and $\overline{\mathfrak{Q}}^i$ its Jacobson radical, in $B^i = \mathrm{End}_E(V^i)$. 
As an element $x \in \mathfrak{n} \cap \overline{\mathfrak{B}} = \mathfrak{n} \cap \overline{\mathfrak{Q}}$ above, we take $x = (x_1,x_1^\natural) \in (B^i)^\times \times (B^{\overline{i}})^\times$ and let $h = C(x)$. 
Then this is written in the form $(y,y')$, with $y = C(x_1) = 1 - x_1 \in \Vec{U}^1(\overline{\mathfrak{B}}^i)$. 
If $x_1$ varies,  the set of the $y = C(x_1)$'s projects onto $\Vec{U}^1(\overline{\mathfrak{B}}^i)/\Vec{U}^1(\mathfrak{B}^i)$. The quotient $\Vec{U}(\overline{\mathfrak{B}}^i)/\Vec{U}^1(\mathfrak{B}^i)$ is a proper parabolic subgroup of $\Vec{U}(\mathfrak{B}^i)/\Vec{U}^1(\mathfrak{B}^i)$, and $\Vec{U}^1(\overline{\mathfrak{B}}^i)/\Vec{U}^1(\mathfrak{B}^i)$ is its unipotent radical, as in the proof of \cite[5.5.7]{BK1}. 
Hence, similarly, we have $\sigma_-(u^{-1}hu) \circ T = T$ for $T \in I_w(\sigma_- \vert \Vec{U}(\mathfrak{B}))$, and this contradicts the cuspidality of $\sigma_0$. The proof of Lemma 5.5.2 is completed.
\end{proof}

\subsection{}                       

From Proposition 5.5.1, we obtain an analogue of a maximal simple type for $GL(N,F)$ of \cite[(6.1)]{BK1} as follows:

\begin{thm}            
Let $[\mathfrak{A},n,0,\beta]$ be a good skew simple stratum in $A$, with $\mathfrak{A} = \mathfrak{A}(\mathcal{L})$ principal, and $(J_-,\lambda_-)$ be a simple type in $G$ attached to $[\mathfrak{A},n,0,\beta]$. 
Let $\mathfrak{B}$ be the $\mathfrak{A}$-centralizer of $\beta$. Suppose that $\mathfrak{B}$ is maximal, i.e., $e(\mathcal{L}_{\mathfrak{o}_E}) = 1$. Then $(J_-, \lambda_-)$ is a $[G,\pi]_G$-type in $G$ for some irreducible supercuspidal representation $\pi$ of $G$, and $\pi$ is given by $\mathrm{Ind}(\lambda_-: J_-, G)$.
\end{thm}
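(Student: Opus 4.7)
The plan is to combine Proposition 5.5.1 with the triviality of $\Vec{W}(\mathfrak{B})$ in the maximal case to obtain $I_G(\lambda_-) \subset J_-$, and then conclude by the standard Mackey/Frobenius argument. The centre of $G$ is compact throughout --- being $\{\pm 1\}$ in the symplectic case or $\{z \in F : z\bar{z} = 1\} \subset \mathfrak{o}_F^\times$ in the unramified unitary case --- so no extension to a larger group modulo the centre is needed; this considerably simplifies matters compared to the $GL(N,F)$ story of \cite{BK1}.

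First I verify that $\Vec{W}(\mathfrak{B}) = \{1\}$ when $e = 1$. By definition, $\Vec{W}_0(\mathfrak{B})$ is generated by $\Vec{s}_1, \ldots, \Vec{s}_{[e/2]}$; since $[e/2] = 0$, this generating set is empty and $\Vec{W}_0(\mathfrak{B}) = \{1\}$. For $\Vec{D}(\mathfrak{B})$, note that $\widetilde{\Vec{\mathfrak{M}}}(\mathfrak{B}) = \mathfrak{B}^1 = \mathfrak{B}$ is the maximal $\mathfrak{o}_E$-order $\mathrm{End}_{\mathfrak{o}_E}(L_0)$; a diagonal matrix $y = \mathrm{Diag}(\varpi_E^{m_r}, \ldots, \varpi_E^{-m_r}) \in \Vec{D}$ normalises $\mathfrak{B}^\times$ iff $yL_0$ is a scalar multiple of $L_0$, which forces all exponents to coincide. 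Combined with the symmetry $\varpi_E^{m_i} = \varpi_E^{-m_i}$ built into $\Vec{D}$, this forces every $m_i = 0$. Hence $\Vec{D}(\mathfrak{B}) = \{1\}$, so $\Vec{W}(\mathfrak{B}) = \Vec{D}(\mathfrak{B}) \rtimes \Vec{W}_0(\mathfrak{B}) = \{1\}$, and Proposition 5.5.1 yields $I_G(\lambda_-) \subset J_- \Vec{W}(\mathfrak{B}) J_- = J_-$.

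With the intertwining contained in $J_-$, Mackey's formula gives
\begin{equation*}
\mathrm{End}_G(\mathrm{Ind}(\lambda_-:J_-,G)) = \bigoplus_{g \in J_- \backslash I_G(\lambda_-) / J_-} \mathrm{Hom}_{J_- \cap g J_- g^{-1}}(\lambda_-, \lambda_-^g) = \mathrm{End}_{J_-}(\lambda_-) = \mathbb{C}
\end{equation*}
by Schur's lemma, so $\pi := \mathrm{Ind}(\lambda_-:J_-,G)$ is irreducible. Since $J_-$ is compact, its matrix coefficients are compactly supported, and because the centre of $G$ is compact this forces $\pi$ to be supercuspidal.

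Finally, for the type property: $G$ is semisimple, hence admits no nontrivial unramified characters, so $[G,\pi]_G$ reduces to the isomorphism class of $\pi$. If $\pi'$ is any irreducible smooth representation of $G$ with $\mathrm{Hom}_{J_-}(\lambda_-, \pi' \vert J_-) \ne 0$, Frobenius reciprocity for compact induction from an open subgroup gives $\mathrm{Hom}_G(\pi, \pi') \ne 0$, forcing $\pi' \simeq \pi$; the converse is trivial since $\pi$ visibly contains $\lambda_-$ on $J_-$. Thus $(J_-, \lambda_-)$ is a $[G,\pi]_G$-type. The bulk of the work is already in Proposition 5.5.1; given that, the only subtle point is verifying that $\Vec{W}(\mathfrak{B})$ collapses to $\{1\}$ in the maximal case, after which the argument is completely routine and avoids the central-character manoeuvres of \cite[Section 6]{BK1}.
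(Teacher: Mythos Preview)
Your proof is correct and follows essentially the same route as the paper's: reduce to $\Vec{W}(\mathfrak{B})=\{1\}$, invoke Proposition~5.5.1 to get $I_G(\lambda_-)\subset J_-$, then deduce irreducibility and supercuspidality of the compact induction and the type property by Frobenius reciprocity. The paper compresses all of this into citations of \cite{Cy} and \cite{BK2}, whereas you spell out the details.

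Two small inaccuracies worth fixing. First, $\Vec{D}(\mathfrak{B})$ is defined in 5.2 as the $\Vec{D}$-\emph{centralizer} of $\widetilde{\Vec{\mathfrak{M}}}(\mathfrak{B})^\times$, not the normalizer; your argument actually computes the normalizer, but since that is already $\{1\}$ the centralizer is too, so no harm done. Second, in the unramified unitary case $G=U(V,h)$ is reductive with compact center $\{z\in F^\times: z\bar z=1\}$ but is not semisimple. Your conclusion that $G$ admits no nontrivial unramified characters is still correct (the $F_0$-rational character group of $\Vec G$ is trivial, since any algebraic character factors through $\det$ into the anisotropic torus $U(1)$), so the inertial class $[G,\pi]_G$ really is a singleton; just replace ``$G$ is semisimple'' by ``$G$ has compact center and hence no nontrivial unramified characters''.
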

\begin{proof}
From Proposition 5.1.1, we have $\Vec{W}(\mathfrak{B}) = \{1\}$, and from Proposition 5.5.1, $I_G(\lambda_-) \subset J_-$. Thus $\mathrm{Ind}(\lambda_-:J_-,G)$ is an irreducible supercuspidal representation of $G$ (cf. \cite[(1.5)]{Cy}). 
If an irreducible representation $\pi$ of $G$ contains $\lambda_-$, from Frobenius reciprocity (cf. \cite[(1.6)]{Cy}), $\pi$ is equivalent to $\mathrm{Ind}(\lambda_-:J_-,G)$. Hence the assertion follows from \cite[Section 2]{BK2} (cf. \cite[Definition 7.3]{Ro}).
\end{proof}

Such a simple type $(J_-,\lambda_-)$ in $G$ as in Theorem 5.6.1 is called a {\it supercuspidal type} in $G$.

Suppose that $[\mathfrak{A},n,0,\beta]$ is a good simple stratum in $A$, with $\mathfrak{A} = \mathfrak{A}(\mathcal{L})$ principal, and $\theta_- \in \mathcal{C}_-(\mathfrak{A},0,\beta)$. Let $E = F[\beta]$ and $e = e(\mathcal{L}_{\mathfrak{o}_E})$.

\begin{defn}              
Let $P = MN_u$ be a parabolic subgroup of $G$ associated with $[\mathfrak{A},n,0,\beta]$.
Let $(J_-,\lambda_-)$ be a simple type in $G$ attached to $[\mathfrak{A},n,0,\beta]$.
We write
\begin{equation*}
J_{P,-} = (J_- \cap P)H_-^1
\end{equation*}
as in 4.2, and define $\lambda_{P,-}$ to be the natural representation on the subspace of $(J_- \cap N_u)$-fixed vectors in the representation space of $\lambda_-$.
Moreover, we define a representation $(J_{P,-} \cap M,\lambda_{M,-})$ by
\begin{equation*}
\lambda_{M,-} = \lambda_{P,-} \vert (J_{P,-} \cap M).
\end{equation*}
\end{defn}

We note $J_{P,-} \cap M = J_- \cap M$. Put $t = [(e+1)/2]$. We have seen in 4.2 that
\begin{equation}
J_- \cap M = \displaystyle \prod_{i=1}^t J(\beta,\mathfrak{A}^{(i)}),
\end{equation}
where if $t = (e+1)/2 \in \mathbb{Z}$, we understand $J(\beta,\mathfrak{A}^{(t)}) = J_-(\beta,\mathfrak{A}^{(t)})$ in $U(V^t,h_t)$ (cf. Lemma 4.1.2). According to this decomposition, the representation $\lambda_{M,-}$ will be decomposed.

From Proposition 4.1.3, under the identification $H_-^1(\beta,\mathfrak{A}) = \prod_i H^1(\beta,\mathfrak{A}^{(i)})$, we have
\begin{equation*}
\theta_- = \theta^{(1)} \otimes \cdots \otimes \theta^{(t)},
\end{equation*}
where $\theta^{(i)} \in \mathcal{C}(\mathfrak{A}^{(i)},0,2\beta)$, $1 \le i \le t$, (cf. Proposition 4.2.1). From Proposition 3.1.2, there is a unique irreducible representation $\eta_-$ which contains $\theta_-$, 
and from Theorem 3.2.6, we have an irreducible representation $\kappa_-$ of $J_-$, which is a $\beta$-extension of $\eta_-$. From Proposition 4.2.2, we obtain $\eta_{P,-}$ of $J_{P,-}^1 = (J_-^1 \cap P)H_-^1$ such that
\begin{equation*}
\eta_{P,-} \vert (J_-^1 \cap M) \simeq \eta^{(1)} \otimes \cdots \otimes \eta^{(t)},
\end{equation*}
where $\eta^{(i)}$ is the unique irreducible representation of $J^1(\beta,\mathfrak{A}^{(i)})$ which contains $\theta^{(i)}$, and if $t = (e+1)/2 \in \mathbb{Z}$, we understand $J^1(\beta,\mathfrak{A}^{(t)}) = J_-^1(\beta,\mathfrak{A}^{(t)}),\ \eta^{(t)} = \eta_-^{(t)}$.

Let $\kappa_{P,-}$ be the natural representation on the subspace of $(J_-^1 \cap N_u)$-fixed vectors in the representation space of $\kappa_-$. Then, as in \cite[(7.2)]{BK1}, we obtain the results for $\kappa_{P,-}$ as follows: $\kappa_{P,-}$ is irreducible and $\kappa_{P,-} \vert J_{P,-}^1 = \eta_{P,-}$. We have
\begin{equation*}
\kappa_{P,-} \vert (J_- \cap M) \simeq \kappa^{(1)} \otimes \cdots \otimes \kappa^{(t)},
\end{equation*}
where $\kappa^{(i)}$ is an irreducible representation of $J(\beta,\mathfrak{A}^{(i)})$ and a $\beta$-extension of $\eta^{(i)}$, and if $t = (e+1)/2 \in \mathbb{Z}$, we understand $J(\beta,\mathfrak{A}^{(t)}) = J_-(\beta,\mathfrak{A}^{(t)}),\ \kappa^{(t)} = \kappa_-^{(t)}$.
Moreover, we have
\begin{equation*}
\kappa_- = \mathrm{Ind}(\kappa_{P,-}:(J_- \cap P)H_-^1,J_-).
\end{equation*}
By definition, elements of $\Vec{W}(\mathfrak{B})$ normalize the Levi subgroup $M$ of $G$ (cf. 4.1 and 5.1). We can easily show that the analogues of \cite[(7.2.10), (7.1.15)]{BK1} hold for $G$. Thus it follows from \cite[(7.2.16)]{BK1} that some element of $\Vec{W}(\mathfrak{B})$ may induce an equivalence $\kappa^{(i)} \simeq \kappa^{(j)}$. Hence we have
\begin{equation*}
\kappa^{(i)} \simeq \kappa^{(j)},\ \text{for}\ 1 \le i, j \le [e/2].
\end{equation*}
We note that the involution $^-$ on $A$ induces an involution on $J(\beta,\mathfrak{A}^{(i)})$, for $1 \le i \le t$, by (5.6.1). Furthermore, we have
\begin{equation*}
\kappa^{(i)} \simeq (\kappa^{(i)})^*,\ \text{for}\ 1 \le i \le t,
\end{equation*}
where $(\kappa^{(i)})^*(x) = \kappa^{(i)}(\overline{x}\ ^{-1}),$ for $x \in J(\beta,\mathfrak{A}^{(i)})$. This leads to
\begin{equation*}
\theta^{(i)} \simeq (\theta^{(i)})^*,\ \text{for}\ 1 \le i \le t.
\end{equation*}
In particular, if $t = (e+1)/2 \in \mathbb{Z}$, $\kappa^{(t)} = \kappa_-^{(t)}$, and automatically, $\kappa_-^{(t)} = (\kappa_-^{(t)})^*$, and  $\theta_-^{(t)} = (\theta_-^{(t)})^*$.

\begin{thm}[(7.2.17) of \cite{BK1}]          
Let $[\mathfrak{A},n,0,\beta]$ be a good skew simple stratum in $A$, with $\mathfrak{A} = \mathfrak{A}(\mathcal{L})$ principal, and $(J_-,\lambda_-)$ be a simple type in $G$ attached to $[\mathfrak{A},n,0,\beta]$. 
Let $P = MN_u$ be a parabolic subgroup of $G$ associated with $[\mathfrak{A},n,0,\beta]$, and $(J_{P,-}, \lambda_{P,-})$, $(J_{P,-} \cap M, \lambda_{M,-})$ the representations in Definition 5.6.2. Then
\begin{enumerate}
  \item $\lambda_{P,-}$ and $\lambda_{M,-}$ are irreducible, and $\lambda_- \simeq \mathrm{Ind}(\lambda_{P,-}:J_{P,-},J_-)$;\
  \item Under the identification $J_{P,-} \cap M = \prod_i J(\beta,\mathfrak{A}^{(i)})$, for $1 \le i \le [e/2]$, there is a supercuspidal type $(J(\beta,\mathfrak{A}^{(i)}), \lambda^{(i)})$ in $\mathrm{Aut}_F(V^i)$, and if $t = (e+1)/2 \in \mathbb{Z}$,  there is a supercuspidal type $(J_-(\beta,\mathfrak{A}^{(t)}), \lambda_-^{(t)})$ in $U(V^t,h_t)$ such that\\
   \begin{equation*}
   \lambda_{M,-} \simeq \lambda^{(1)} \otimes \cdots \otimes \lambda^{(t)},
   \end{equation*}
   where we understand that $\lambda^{(t)}$ means $\lambda_-^{(t)}$ if $e$ is odd,
  \item For $1 \le i, j \le [e/2]$, $\lambda^{(i)} \simeq \lambda^{(j)}$. If the component $\sigma_0$ of $\sigma_-$ is self-dual, then $\lambda^{(i)} \simeq (\lambda^{(i)})^*$, for $1 \le i \le t$.
\end{enumerate}
\end{thm}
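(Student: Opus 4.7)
The plan is to follow the proof of \cite[(7.2.17)]{BK1}, adapting it to the classical group $G$ using the preparatory results established immediately before the theorem. The proof naturally splits along the three parts of the statement.

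For part (1), I would first argue that
\[
\lambda_{P,-} \simeq \kappa_{P,-} \otimes (\sigma_- \vert J_{P,-}).
\]
Since $\sigma_-$ is inflated from $J_-/J_-^1$, it is trivial on the pro-$p$ subgroup $J_-^1 \cap N_u$, so taking $(J_-^1 \cap N_u)$-fixed vectors in $\lambda_- = \kappa_- \otimes \sigma_-$ reduces to taking those vectors on the $\kappa_-$-factor only. Combining this with the facts recorded just above the theorem — $\kappa_{P,-}$ is irreducible, $\kappa_{P,-}\vert J_{P,-}^1 = \eta_{P,-}$, and $\kappa_- \simeq \mathrm{Ind}(\kappa_{P,-}:(J_- \cap P)H_-^1, J_-)$ — together with the Iwahori decomposition of Proposition 4.1.3 applied to $J_-$ and $J_{P,-}$, yields irreducibility of both $\lambda_{P,-}$ and $\lambda_{M,-} = \lambda_{P,-}\vert(J_{P,-} \cap M)$, and the induction formula $\lambda_- \simeq \mathrm{Ind}(\lambda_{P,-}:J_{P,-}, J_-)$ by transitivity of induction.

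For part (2), the decomposition (5.6.1) of $J_{P,-} \cap M = J_- \cap M$ together with the compatible product decompositions
\[
\kappa_{P,-}\vert(J_- \cap M) \simeq \kappa^{(1)} \otimes \cdots \otimes \kappa^{(t)},\qquad \sigma_-\vert(J_- \cap M) \simeq \sigma^{(1)} \otimes \cdots \otimes \sigma^{(t)}
\]
(with $\sigma^{(i)} = \sigma_0$ for $1 \le i \le [e/2]$, and $\sigma^{(t)} = \sigma_1$ when $t = (e+1)/2 \in \mathbb{Z}$) give $\lambda_{M,-} \simeq \lambda^{(1)} \otimes \cdots \otimes \lambda^{(t)}$ with $\lambda^{(i)} = \kappa^{(i)} \otimes \sigma^{(i)}$. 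For $1 \le i \le [e/2]$, the stratum $[\mathfrak{A}^{(i)}, n_i, 0, \beta]$ is simple in $\mathrm{End}_F(V^i)$ with $\mathfrak{A}^{(i)}$ principal of $\mathfrak{o}_E$-period one, so its $B$-centralizer is maximal; hence $(J(\beta, \mathfrak{A}^{(i)}), \lambda^{(i)})$ is a maximal simple type in $\mathrm{Aut}_F(V^i) \simeq GL(f, E)$, and by \cite[(6.2)]{BK1}, \cite{BK2} it is a supercuspidal type there. When $e$ is odd, the factor $[\mathfrak{A}^{(t)}, n_t, 0, \beta]$ is a good skew simple stratum in $\mathrm{End}_F(V^t)$ with centralizer maximal in $U(V^t, h_t)$, so Theorem 5.6.1 identifies $(J_-(\beta, \mathfrak{A}^{(t)}), \lambda_-^{(t)})$ as a supercuspidal type in $U(V^t, h_t)$.

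For part (3), the equivalence $\lambda^{(i)} \simeq \lambda^{(j)}$ for $1 \le i, j \le [e/2]$ follows by combining the equivalences $\kappa^{(i)} \simeq \kappa^{(j)}$ recorded just above the theorem (realised by transposition elements of $\Vec{W}(\mathfrak{B})$ normalizing $M$ and permuting the factors in (5.6.1)) with the fact that the corresponding $\sigma$-factor is uniformly $\sigma_0$ in every such slot. When $\sigma_0$ is self-dual, the equivalence $\kappa^{(i)} \simeq (\kappa^{(i)})^*$ recorded above, together with $\sigma_0 \simeq \sigma_0^*$ in the $GL$ slots and the automatic self-duality of $\sigma_1$ of $U(f, k_{E_0})$ provided by Remark 5.3.4 in the unitary slot, tensor up to give $\lambda^{(i)} \simeq (\lambda^{(i)})^*$ for $1 \le i \le t$.

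The main obstacle is the bookkeeping needed for the clean identification of $\lambda_{P,-}$ with $\kappa_{P,-} \otimes (\sigma_-\vert J_{P,-})$: one must verify, as in \cite[(7.2.17)]{BK1}, that the passage to $(J_- \cap N_u)$-fixed vectors interacts correctly with both the tensor-product structure $\kappa_- \otimes \sigma_-$ and the Iwahori decomposition of $J_-$ relative to $P = MN_u$, paying attention to compatibility with the involution $\gamma$. Once this is in place, the remaining steps are formal: the $GL(f,E)$-factors are handled by the $GL(N,F)$ theory of \cite{BK1}, the unitary factor (if any) by Theorem 5.6.1, and the equivalences in (3) by the explicit normalizer action of $\Vec{W}(\mathfrak{B})$ described in 5.2 combined with the self-duality assumption on $\sigma_0$.
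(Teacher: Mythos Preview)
Your proposal is correct and follows essentially the same approach as the paper: the paper's proof simply states that, using the preparatory material established just before the theorem (the decomposition of $\kappa_{P,-}\vert(J_-\cap M)$, the equivalences $\kappa^{(i)}\simeq\kappa^{(j)}$ and $\kappa^{(i)}\simeq(\kappa^{(i)})^*$, and the induction formula for $\kappa_-$), one argues exactly as in \cite[(7.2.17)]{BK1}, and that for (3) the properties of $\kappa_-$ translate directly to $\lambda_-$ once $\sigma_0$ is self-dual. Your write-up is in fact more detailed than the paper's, but the strategy is identical.
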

\begin{proof}
By the above argument, we can prove the theorem in the same way as the proof of \cite[(7.2.17)]{BK1}. In particular, for (3), we can similarly translate properties of $\kappa_-$ directly to $\lambda_-$, if the component $\sigma_0$ of $\sigma_-$ is self-dual.
\end{proof}

\begin{cor}         
Let notation and assumptions be as in Theorem 5.6.3. Let $\pi_i$ be an irreducible supercuspidal representation of $\mathrm{Aut}_F(V^i)$ which contains $\lambda^{(i)}$, for $1 \le i \le [e/2]$, and when $t = (e+1)/2 \in\mathbb{Z}$, 
let $\pi_t$ be an irreducible supercuspidal representation of $U(V^t, h_t)$ which contains $\lambda_-^{(t)}$. We define an irreducible supercuspidal representation $\pi$ of the Levi subgroup $M$ of $G$ by
\begin{equation*}
\pi = \displaystyle \bigotimes^{[(e+1)/2]} \pi_i
\end{equation*}
Then $(J_{P,-} \cap M, \lambda_{M,-})$ is an $[M,\pi]_M$-type in $M$. 
\end{cor}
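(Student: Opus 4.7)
The plan is to exploit the product structure of $M$ and reduce the claim to the supercuspidal case in each factor, which is precisely Theorem 5.6.1. First, by Lemma 4.1.2, $M$ decomposes canonically as a direct product
\begin{equation*}
M \simeq \prod_{i=1}^{[(e+1)/2]} M_i,
\end{equation*}
where $M_i = \mathrm{Aut}_F(V^i)$ for $1 \le i \le [e/2]$ and, when $t = (e+1)/2 \in \mathbb{Z}$, $M_t = U(V^t,h_t)$. By (5.6.1), the same decomposition holds for the compact open subgroup: $J_{P,-} \cap M = J_- \cap M = \prod_i J(\beta,\mathfrak{A}^{(i)})$, with the convention $J(\beta,\mathfrak{A}^{(t)}) = J_-(\beta,\mathfrak{A}^{(t)})$ in the unitary factor.

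Next, invoke Theorem 5.6.3(1)(2) to conclude that $\lambda_{M,-}$ is irreducible and decomposes accordingly:
\begin{equation*}
\lambda_{M,-} \simeq \lambda^{(1)} \otimes \cdots \otimes \lambda^{(t)},
\end{equation*}
where each $(J(\beta,\mathfrak{A}^{(i)}),\lambda^{(i)})$ is a supercuspidal type in $M_i$. By Theorem 5.6.1 applied to each factor, $(J(\beta,\mathfrak{A}^{(i)}),\lambda^{(i)})$ is an $[M_i,\pi_i]_{M_i}$-type in $M_i$, where $\pi_i = \mathrm{Ind}(\lambda^{(i)}: J(\beta,\mathfrak{A}^{(i)}),M_i)$ is irreducible and supercuspidal. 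In particular, $\pi = \bigotimes_{i} \pi_i$ is an irreducible supercuspidal representation of $M$.

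Finally, deduce the type property for $(J_{P,-} \cap M,\lambda_{M,-})$ from the standard fact that types behave compatibly with direct products of reductive groups. Explicitly, any irreducible smooth representation $\sigma$ of $M = \prod_i M_i$ factors as $\sigma = \bigotimes_i \sigma_i$ with $\sigma_i$ an irreducible smooth representation of $M_i$, and
\begin{equation*}
\mathrm{Hom}_{J_{P,-} \cap M}(\lambda_{M,-},\sigma) \simeq \bigotimes_i \mathrm{Hom}_{J(\beta,\mathfrak{A}^{(i)})}(\lambda^{(i)},\sigma_i).
\end{equation*}
Hence $\sigma$ contains $\lambda_{M,-}$ if and only if each $\sigma_i$ contains $\lambda^{(i)}$, which by the type property in each factor is equivalent to $\sigma_i$ lying in the Bernstein component $[M_i,\pi_i]_{M_i}$ for every $i$. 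Since the Bernstein decomposition of the smooth representation category of a finite direct product of reductive $p$-adic groups splits as the tensor product of the Bernstein decompositions of the factors, this last condition is equivalent to $\sigma$ lying in $[M,\pi]_M$. This yields the desired type property.

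The only nontrivial ingredient is the compatibility of Bernstein decompositions with direct products and the corresponding tensor-product statement for types; this is a general fact (implicit in \cite[Section 2]{BK2}, see also \cite[(8.5)]{BK2}) and applies uniformly regardless of whether the factors are general linear groups or the unitary factor $U(V^t,h_t)$, since nothing in that argument depends on the specific isogeny class of the factors.
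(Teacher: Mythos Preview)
Your approach is essentially the paper's: decompose $M$ and $\lambda_{M,-}$ into factors via Theorem 5.6.3, verify each factor is a type, then appeal to the compatibility of types with direct products (the paper cites \cite[(6.2.2)]{BK1}, Theorem 5.6.3, and \cite[Proposition 1.3]{BK3} for exactly this). Two small corrections are needed, though neither breaks the argument.

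First, Theorem 5.6.1 is stated only for the classical group $G$; it does not apply to the factors $M_i = \mathrm{Aut}_F(V^i)$ with $1 \le i \le [e/2]$, which are general linear groups. For those factors the relevant input is \cite[(6.2.2)]{BK1}, which says that a maximal simple type in $GL$ is an $[M_i,\pi_i]_{M_i}$-type. Theorem 5.6.1 is the correct reference only for the unitary factor $M_t = U(V^t,h_t)$ when $e$ is odd.

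Second, your parenthetical claim that $\pi_i = \mathrm{Ind}(\lambda^{(i)}:J(\beta,\mathfrak{A}^{(i)}),M_i)$ is irreducible is false for the $GL$ factors: the $GL$-intertwining of a maximal simple type is $J(\beta,\mathfrak{A}^{(i)})\,E^\times\,J(\beta,\mathfrak{A}^{(i)})$, strictly larger than $J(\beta,\mathfrak{A}^{(i)})$, so the compact induction is reducible (a direct sum of unramified twists of a single supercuspidal). The statement of the corollary already handles this correctly by taking $\pi_i$ to be \emph{any} irreducible supercuspidal containing $\lambda^{(i)}$; the inertial class $[M_i,\pi_i]_{M_i}$ is independent of that choice. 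Since you only use the type property and not the irreducibility of the induction, this error is harmless, but the sentence should be deleted or rephrased.
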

\begin{proof}
This follows directly from \cite[(6.2.2)]{BK1} and Theorem 5.6.3 (cf. \cite[Proposition 1.3]{BK3}).
\end{proof}

\begin{rem}        
Let $\pi$ be an irreducible supercuspidal representation of $M$ as in Corollary 5.6.4. If the component $\sigma_0$ of $\sigma_-$, with $\lambda_- = \kappa_- \otimes \sigma_-$, is self-dual, the contragradient representation of $\pi$ belongs to $[M,\pi]_M$, and this inertial class contains a self-contragradient representation of $M$. This follows from Theorem 5.6.3 and statements in \cite[2.2 and Introduction]{Bl}.
\end{rem}

\section{Hecke algebras and Types}                        

\subsection{}                     

In this section, we prove that $(J_{P,-},\lambda_{P,-})$ is a type in $G$. To do so, we study the Hecke algebras $\mathcal{H}(G,\lambda_{P,-})$ of $(J_{P,-},\lambda_{P,-})$.

Suppose that $[\mathfrak{A},n,0,\beta]$ is a good simple stratum in $A$, with $\mathfrak{A} = \mathfrak{A}(\mathcal{L})$ principal, and $(J_-,\lambda_-)$ a simple type in $G$ attached to $[\mathfrak{A},n,0,\beta]$, with $\lambda_- = \kappa_- \otimes \sigma_-$. 
Let $E = F[\beta]$, $B = B_\beta$ the $A$-centralizer of $\beta$, and $\mathfrak{B} = \mathfrak{A} \cap B$.

\begin{prop}[(7.2.19) of \cite{BK1}]        
Let $\lambda_{M,-}$ be the representation of $J_{P,-} \cap M$ which is the restriction of $\lambda_{P,-}$ as in Definition 5.6.2, and $\Vec{W}(\sigma_-)$ be the subgroup of $\Vec{W}(\mathfrak{B})$ defined by (5.3.1). Let $w$ be an element of $\Vec{W}(\mathfrak{B})$. Then $I_w(\lambda_{P,-}) = I_w(\lambda_{M,-})$, and if $w \in \Vec{W}(\sigma_-)$, its dimension is equal to one.
\end{prop}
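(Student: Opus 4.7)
The plan is to adapt the proof of \cite[(7.2.19)]{BK1} by exploiting the Iwahori decomposition of $J_{P,-}$ relative to $P = MN_u$ developed in section 4. By Proposition 4.1.3,
\[
J_{P,-} = (H_-^1 \cap N_\ell)(J_- \cap M)(J_-^1 \cap N_u).
\]
Definition 5.6.2 makes $\lambda_{P,-}$ trivial on $J_-^1 \cap N_u$, and combining Propositions 4.2.1 and 4.2.2 with the analogous decomposition of $\kappa_{P,-}$ described after Definition 5.6.2 yields triviality of $\lambda_{P,-}$ on $H_-^1 \cap N_\ell$ as well. Any $w \in \Vec{W}(\mathfrak{B}) = \Vec{D}(\mathfrak{B}) \rtimes \Vec{W}_0(\mathfrak{B})$ lies in the normalizer of $M$ inside $G$ and permutes the blocks $A^{ij}$ in a way that sends the Iwahori-type decomposition of $J_{P,-}$ to one of the same shape; in particular, $J_{P,-} \cap J_{P,-}^w$ inherits a compatible decomposition into lower-unipotent, Levi, and upper-unipotent pieces.

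For the first equality, any $T \in I_w(\lambda_{P,-})$ must intertwine the action of $J_{P,-} \cap J_{P,-}^w$ with its $w$-twist. Because $\lambda_{P,-}$ and $\lambda_{P,-}^w$ are both trivial on the two unipotent pieces of $J_{P,-} \cap J_{P,-}^w$, the intertwining condition collapses to commutation with the Levi part $(J_- \cap M) \cap (J_- \cap M)^w$, on which $\lambda_{P,-}$ restricts to $\lambda_{M,-}$. Conversely, any intertwiner for $\lambda_{M,-}$ extends uniquely across the unipotent pieces, giving $I_w(\lambda_{P,-}) = I_w(\lambda_{M,-})$.

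For the dimension claim, decompose $\lambda_{M,-} \simeq \lambda^{(1)} \otimes \cdots \otimes \lambda^{(t)}$ by Theorem 5.6.3(2), with each $\lambda^{(i)} = \kappa^{(i)} \otimes \sigma^{(i)}$, where $\sigma^{(i)} = \sigma_0$ for $1 \le i \le [e/2]$ and $\sigma^{(t)} = \sigma_1$ when $e$ is odd. Writing $w = d \cdot w_0$ with $d \in \Vec{D}(\mathfrak{B})$ and $w_0 \in \Vec{W}_0(\mathfrak{B})$, the $w$-action permutes the factors according to $w_0$ and acts by inner automorphisms within each block. For each $\kappa^{(i)}$, Proposition 3.2.5 (and \cite[(5.2.7)]{BK1} for the $GL$-factors) combined with Proposition 3.1.1 gives a one-dimensional intertwining after compensating for the permutation. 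For the $\sigma$-part, the hypothesis $w \in \Vec{W}(\sigma_-)$ together with the cuspidality of each $\sigma^{(i)}$ gives a one-dimensional intertwining by Schur's lemma. Multiplying yields $\dim I_w(\lambda_{M,-}) = 1$.

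The main obstacle is the careful verification that $w$-conjugation is compatible with the Iwahori decomposition even when $w_0$ swaps factors $V^i$ and $V^{\overline{i}}$, thereby exchanging certain blocks of $N_u$ with blocks of $N_\ell$. Here one invokes Theorem 5.6.3(3), which provides $\lambda^{(i)} \simeq \lambda^{(j)}$ for $1 \le i,j \le [e/2]$ and the self-duality $\lambda^{(i)} \simeq (\lambda^{(i)})^*$ when $\sigma_0$ is self-dual, ensuring that the permutation induced by $w_0$ actually yields an equivalence of the tensor factors and that the resulting intertwining space on the Levi has dimension precisely one.
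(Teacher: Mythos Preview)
Your overall strategy matches the paper's, but the crucial step you flag as ``the main obstacle'' is not actually resolved by your proposed fix, and this is where the paper's proof differs from yours.

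The gap is in the inclusion $I_w(\lambda_{M,-}) \subset I_w(\lambda_{P,-})$. You assert that $J_{P,-} \cap J_{P,-}^w$ inherits a decomposition into lower-unipotent, Levi, and upper-unipotent pieces \emph{relative to the original parabolic} $P = MN_u$, and that both $\lambda_{P,-}$ and $\lambda_{P,-}^w$ are trivial on the unipotent pieces. But when $w$ contains the reflection $\Vec{s}_{[e/2]}$ (which swaps $V^{[e/2]}$ with $V^{\overline{[e/2]}}$), conjugation by $w$ genuinely exchanges certain root subgroups of $N_u$ with root subgroups of $N_\ell$. Consequently $J_{P,-}^w \cap N_u$ need not sit inside $(J_{P,-} \cap N_u)^w \cup (J_{P,-} \cap N_\ell)^w$ in the required way, and there is no reason for $\lambda_{P,-}^w$ to be trivial on $J_{P,-} \cap J_{P,-}^w \cap N_u$. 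Invoking Theorem 5.6.3(3) does not help here: that result concerns equivalences $\lambda^{(i)} \simeq \lambda^{(j)}$ and $\lambda^{(i)} \simeq (\lambda^{(i)})^*$ on the Levi, whereas the problem is purely group-theoretic, about how $J_{P,-}$ and $J_{P,-}^w$ intersect the unipotent radicals. (Also, the self-duality hypothesis on $\sigma_0$ you bring in is not part of the statement.)

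The paper sidesteps this by proving a different Iwahori decomposition, namely
\[
J_{P,-} = (J_{P,-} \cap {}^yN_\ell)\,(J_{P,-} \cap M)\,(J_{P,-} \cap {}^yN_u),
\]
i.e.\ a decomposition of $J_{P,-}$ relative to the \emph{$y$-conjugated} parabolic ${}^yP = M\cdot{}^yN_u$. This is the key technical input (equation (6.1.1) in the paper), obtained by adapting S\'echerre's argument from \cite[Theorem 2.19]{Sc2} together with (6.1.2). Once this is in hand, $\lambda_{P,-}$ is trivial on $J_{P,-} \cap {}^yN_u$ and $J_{P,-} \cap {}^yN_\ell$ (since these are contained in $J_{P,-}^1$ where $\lambda_{P,-}$ restricts to a multiple of $\theta_-$, and one checks triviality as in 4.2), while $\lambda_{P,-}^y$ is automatically trivial on $(J_{P,-} \cap N_u)^y$ and $(J_{P,-} \cap N_\ell)^y$; the two decompositions now match up and the reduction to the Levi goes through cleanly. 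Your argument would be repaired by establishing this conjugated Iwahori decomposition rather than working with the original one.
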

\begin{proof}
As stated in 5.6, $\Vec{W}(\mathfrak{B})$ normalizes $J_- \cap M$. Take a representative, $y \in \Vec{N}(E_0) \subset B^\times \cap G$, of $w$ (see 5.1).
Clearly $I_y(\lambda_{P,-}) = I_w(\lambda_{P,-}) \subset I_y(\lambda_{M,-})$. We show the converse inclusion. For $GL(N,D)$ with $D$ a central division $F$-algebra, we have an Iwahori decomposition of $J_P$ in the proof of \cite[Theorem 2.19]{Sc2}. Similarly we obtain 
\begin{equation}
J_{P,-} = (J_{P,-} \cap\ ^yN_\ell)(J_{P,-} \cap M)(J_{P,-} \cap\ ^yN_u). 
\end{equation}
The subgroups $\widetilde{N}_\ell$ and $\widetilde{N}_u$ of $\widetilde{G}$, defined in 4.1, are denoted by $U^-$ and $U$ respectively in the proof. We have
\begin{equation}
(\widetilde{N}_\ell\widetilde{M}\widetilde{N}_u)^\Gamma = \widetilde{N}_\ell^\Gamma\widetilde{M}^\Gamma\widetilde{N}_u^\Gamma = N_\ell MN_u.
\end{equation}
In the proof of \cite[Theorem 2.19]{Sc2}, replacing $J_P, \kappa_M$ and $\kappa_P$ by $J_{P,-}, \lambda_{M,-}$ and $\lambda_{P,-}$ respectively, we imitate the proof to prove $I_y(\lambda_{M,-}) \subset I_y(\lambda_{P,-})$ by using (6.1.1) and (6.1.2). Hence the first assertion follows.

Suppose that $w \in \Vec{W}(\sigma_-)$. Then, since by definition $(\sigma_-)^y \simeq \sigma_-$, it follows from Theorem 5.6.3 (3) that the element $y$ stabilizes $\lambda_{M,-}$ (cf. the proof of \cite[(7.2.19)]{BK1}). Thus the space $I_y(\lambda_{M,-}) = I_w(\lambda_{M,-})$ has dimension one.  The proof is completed.
\end{proof}

Let $P = MN_u$ be a parabolic subgroup of $G$ associated with $[\mathfrak{A},n,0,\beta]$, and $(J_{P,-}, \lambda_{P,-})$ the representation obtained from $(J_-, \lambda_-)$ in Definition 5.6.2. Let $\mathcal{H}(G,\lambda_-)$ be the Hecke algebra of $(J_-,\lambda_-)$ (cf. \cite[4.1]{BK1}). 
From Theorem 5.6.3 (1) and \cite[(4.1.3)]{BK1}, there is a canonical algebra isomorphism
\begin{equation}
\mathcal{H}(G, \lambda_-) \simeq \mathcal{H}(G, \lambda_{P,-}).
\end{equation}

\begin{prop}            
The Hecke algebra $\mathcal{H}(G,\lambda_-)$ is spanned by functions with support $J_-wJ_-$, $w \in \Vec{W}(\sigma_-)$, as a $\mathbb{C}$-vector space, and the isomorphism of (6.1.3) is support-preserving.
\end{prop}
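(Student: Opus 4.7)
The plan is to determine, for each $w \in \Vec{W}(\mathfrak{B})$, the intertwining dimension $\dim I_w(\lambda_-)$, and then to transfer the result through the isomorphism (6.1.3) to $\mathcal{H}(G,\lambda_{P,-})$. By Proposition 5.5.1 the support of any nonzero element of $\mathcal{H}(G,\lambda_-)$ lies in $I_G(\lambda_-) \subset J_-\Vec{W}(\mathfrak{B})J_-$, so the problem reduces to examining representatives $\dot w \in \Vec{N}(E_0) \subset B^\times \cap G$ of elements $w \in \Vec{W}(\mathfrak{B})$.

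Since $\kappa_-$ is a $\beta$-extension, Proposition 3.2.5 together with Proposition 3.1.1 (applied to $\eta_- = \kappa_-\vert J_-^1$) yields $\dim I_{\dot w}(\kappa_-) = 1$. Mimicking the proof of \cite[(5.6.6)]{BK1}---using that $\sigma_-$ is trivial on $J_-^1$, that $w$ normalises $\widetilde{\Vec{\mathfrak{M}}}(\mathfrak{B})^\times$, and that $\lambda_-$ acts on $J_-/J_-^1 \simeq \Vec{U}(\mathfrak{B})/\Vec{U}^1(\mathfrak{B})$ through $\sigma_-$---one identifies
\begin{equation*}
I_{\dot w}(\lambda_-) \simeq I_{\dot w}(\kappa_-) \otimes \mathrm{Hom}_{J_-/J_-^1}(\sigma_-, \sigma_-^{\dot w}).
\end{equation*}
Schur's lemma then forces the right-hand side to have dimension $1$ when $\sigma_-^{\dot w} \simeq \sigma_-$ and to vanish otherwise; by the definition (5.3.1) of $\Vec{W}(\sigma_-)$, this proves $\dim I_w(\lambda_-) = 1$ for $w \in \Vec{W}(\sigma_-)$ and $\dim I_w(\lambda_-) = 0$ for $w \in \Vec{W}(\mathfrak{B}) \setminus \Vec{W}(\sigma_-)$, establishing the first claim.

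For the second claim, Theorem 5.6.3(1) gives $\lambda_- \simeq \mathrm{Ind}(\lambda_{P,-}: J_{P,-}, J_-)$, and (6.1.3) is the standard isomorphism of \cite[(4.1.3)]{BK1} attached to this induction. A function $f \in \mathcal{H}(G,\lambda_-)$ with support $J_- w J_-$ corresponds under this isomorphism to a function whose support lies in $J_{P,-}(J_-wJ_-)J_{P,-}$; using the Iwahori decomposition of $J_-$ relative to $P = MN_u$ (Proposition 4.1.3) and the fact that $w \in \Vec{W}(\mathfrak{B})$ normalises $M$, this set collapses to the single double coset $J_{P,-} w J_{P,-}$, and Proposition 6.1.1 matches the intertwining dimensions on the two sides. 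Thus (6.1.3) is support-preserving. The main obstacle will be the tensor decomposition displayed above: one must track the $\Gamma$-fixed intersections of the relevant subgroups with their $\dot w$-conjugates so that the $\kappa_-$ contribution stays one-dimensional while the $\sigma_-$ contribution decouples into a Hom-space on the finite quotient---this is a direct translation of \cite[(5.6.6)]{BK1}, but requires care with the involution $\Gamma$.
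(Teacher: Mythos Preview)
Your argument is correct but takes a more direct route to the first claim than the paper does. Both proofs start from Proposition 5.5.1 to reduce to $w \in \Vec{W}(\mathfrak{B})$, and the paper likewise bounds $\dim I_w(\lambda_-) \le 1$ by invoking the analogue of \cite[(5.6.15)]{BK1}. Where you then use the tensor decomposition $I_{\dot w}(\lambda_-) \simeq I_{\dot w}(\kappa_-) \otimes \mathrm{Hom}(\sigma_-, \sigma_-^{\dot w})$ and finish with Schur's lemma, the paper instead argues indirectly: if $I_w(\lambda_-) \ne 0$ then \cite[(4.1.5)]{BK1} shows $w$ intertwines $\lambda_{P,-}$, Proposition 6.1.1 converts this to intertwining of $\lambda_{M,-}$, and Theorem 5.6.3(3) then forces $w \in \Vec{W}(\sigma_-)$. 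Your approach is closer in spirit to the proof of Proposition 5.3.2 and avoids re-entering the parabolic side; the paper's route instead leverages already-established machinery so as not to revisit the delicate point you flag, namely that conjugation by $\dot w$ descends to the finite quotient $\Vec{U}(\mathfrak{B})/\Vec{U}^1(\mathfrak{B})$ (which works because $\Vec{\mathfrak{M}}(\mathfrak{B})^\times$, normalised by $\dot w$, surjects onto that quotient). For the support-preserving statement the two arguments essentially coincide: both match one-dimensional intertwining spaces on each side through \cite[(4.1.5)]{BK1} and Proposition 6.1.1.
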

\begin{proof}
From Proposition 5.5.1, the Hecke algebra $\mathcal{H}(G,\lambda_-)$ is spanned by functions with support $J_-wJ_-$, $w \in \Vec{W}(\mathfrak{B})$, as a $\mathbb{C}$-vector space.
For $w \in \Vec{W}(\mathfrak{B})$, we can show that the dimension of $I_w(\lambda_-)$ is at most one, in a quite similar way to the proof of \cite[(5.6.15)]{BK1}. If $w$ intertwines $\lambda_-$, the space $I_w(\lambda_-)$ has one dimension. Thus it follows from \cite[(4.1.5)]{BK1} that $w$ intertwines $\lambda_{P,-}$. Since $I_w(\lambda_{P,-}) = I_w(\lambda_{M,-})$ by Proposition 6.1.1, it intertwines $\lambda_{M,-}$ as well. Hence, from Theorem 5.6.3 (3), we see that $w \in \Vec{W}(\sigma_-)$ and that $\mathcal{H}(G,\lambda_-)$ is spanned by functions with support $J_-wJ_-$, $w \in \Vec{W}(\sigma_-)$. For $w \in \Vec{W}(\sigma_-)$, again from \cite[(4.1.5)]{BK1} and Proposition 6.1.1, we see that the spaces $I_w(\lambda_-)$ and $I_w(\lambda_{P,-})$ are both of one dimensional. Thus the algebra isomorphism (6.1.3) is support-preserving. The proof is completed.
\end{proof}

We may identify $\mathcal{H}(G, \lambda_{P,-})$ with $\mathcal{H}(G, \lambda_-)$ via the isomorphism (6.1.3). 
Let $E = F[\beta]$, $B = B_\beta$ the $A$-centralizer of $\beta$, and $\mathfrak{B} = \mathfrak{A} \cap B.$ Let $\Vec{D}(\mathfrak{B})$ be the subgroup of $B^\times \cap G$ defined in 5.1. Let $e = e(\mathcal{L}_{\mathfrak{o}_E})$ and $e' = [e/2]$.
We define $\Vec{D}^-(\mathfrak{B})$ to be a submonoid of $\Vec{D}(\mathfrak{B})$ which consists of elements whose eigenvalues are $\varpi_E^{n_1}, \cdots, \varpi_E^{n_{e'}}, \varpi_E^{-n_{e'}}, \cdots, \varpi_E^{-n_1}$ with 
\begin{equation*}
n_1 \ge \cdots \ge n_{e'}
\end{equation*}
if $e$ is even, and whose eigenvalues are those, together with $1$, if $e$ is odd.

\begin{lem}             
Let $\lambda_{M,-}$ be the representation of $J_{P,-} \cap M$ as above. Then the Hecke algebra $\mathcal{H}(M, \lambda_{M,-})$ is isomorphic to the Laurent polynomial ring 
\begin{equation*}
\mathbb{C}[X_1, \cdots, X_{[e/2]};X_1^{-1}, \cdots, X_{[e/2]}^{-1}].
\end{equation*}
\end{lem}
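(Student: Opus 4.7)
The plan is to reduce the computation to each factor of $M$ by exploiting the tensor product decomposition of $\lambda_{M,-}$ from Theorem 5.6.3(2). Recall that Lemma 4.1.2 identifies $M$ with $\prod_{i=1}^{e/2} \mathrm{Aut}_F(V^i)$ when $e$ is even and with $\bigl(\prod_{i=1}^{(e-1)/2}\mathrm{Aut}_F(V^i)\bigr) \times U(V^t,h_t)$ when $e$ is odd; correspondingly $J_{P,-}\cap M = \prod_{i=1}^{t} J(\beta,\mathfrak{A}^{(i)})$ and $\lambda_{M,-} \simeq \lambda^{(1)} \otimes \cdots \otimes \lambda^{(t)}$, with $t = [(e+1)/2]$.

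Since both $J_{P,-}\cap M$ and $\lambda_{M,-}$ are built coordinate-wise out of the factors of $M$, double cosets of $J_{P,-}\cap M$ in $M$ factor as products of double cosets of $J(\beta,\mathfrak{A}^{(i)})$ in $M_i$, and intertwining of a tensor product of representations on a direct product of groups is the product of the individual intertwining spaces. The standard K\"unneth argument for Hecke algebras thus yields
\[
\mathcal{H}(M, \lambda_{M,-}) \;\simeq\; \bigotimes_{i=1}^{t} \mathcal{H}(M_i, \lambda^{(i)}),
\]
where $M_i$ denotes the $i$-th factor of $M$.

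For each $i$ with $1 \le i \le [e/2]$, the hereditary $\mathfrak{o}_F$-order $\mathfrak{A}^{(i)}$ is maximal in $\mathrm{End}_F(V^i)$: since $\mathfrak{A}$ is principal of $\mathfrak{o}_E$-period $e$, the induced chain $\{L_k^i\}$ on $V^i$ has $\mathfrak{o}_E$-period one. Hence $(J(\beta,\mathfrak{A}^{(i)}),\lambda^{(i)})$ is a maximal simple type in $\mathrm{Aut}_F(V^i)$ in the sense of \cite[(6.1)]{BK1}, and \cite[(5.6)]{BK1} (in the degenerate case where the associated affine Coxeter system has no reflections) identifies its Hecke algebra with the one-variable Laurent polynomial ring $\mathbb{C}[X_i,X_i^{-1}]$, a generator $X_i$ being supported on the double coset containing a prime element $\varpi_E\in E^\times\subset B^{(i)\times}$.

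If $e$ is odd, the remaining factor $(J_-(\beta,\mathfrak{A}^{(t)}),\lambda_-^{(t)})$ is a supercuspidal type in $U(V^t,h_t)$ whose attached $\mathfrak{B}^{(t)}$ is maximal by the same period-one argument, so Theorem 5.6.1 applies: the compact induction is irreducible supercuspidal, $I_{U(V^t,h_t)}(\lambda_-^{(t)})\subset J_-^{(t)}$, and therefore $\mathcal{H}(U(V^t,h_t),\lambda_-^{(t)}) \simeq \mathbb{C}$. Assembling these three pieces via the displayed isomorphism produces the asserted Laurent polynomial ring in $[e/2]$ variables. The main obstacle is the K\"unneth step, which reduces to checking that the intertwining of a tensor product type on a direct product of groups factors through the individual intertwinings and that these double-coset supports are well-behaved; both points follow from the product structure of $J_{P,-}\cap M$ in $M$ and are independent of the skew/unitary setting.
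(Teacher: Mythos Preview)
Your proof is correct and follows essentially the same approach as the paper: decompose $\lambda_{M,-}$ via Theorem 5.6.3, identify the Hecke algebra on each $GL$-factor as $\mathbb{C}[X_i,X_i^{-1}]$ (the paper cites \cite[(7.6.3)]{BK1} directly rather than specializing \cite[(5.6)]{BK1}), use Theorem 5.6.1 to collapse the unitary factor to $\mathbb{C}$ when $e$ is odd, and tensor. Your explicit justification of the K\"unneth step is a point the paper leaves implicit.
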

\begin{proof}
From Theorem 5.6.3, $\lambda_{M,-} \simeq \lambda^{(1)} \otimes \cdots \otimes \lambda^{(t)}$, where $t = [(e+1)/2]$. If $t = (e+1)/2 \in \mathbb{Z}$, $\lambda^{(t)} = \lambda_-^{(t)}$ is a supercuspidal type in $U(V^t, h_t)$. Thus from Theorem 5.6.1, we have
\begin{equation*}
\mathcal{H}(U(V^t,h_t), \lambda_-^{(t)}) \simeq \mathbb{C}.
\end{equation*}
However, since $\lambda^{(i)}$, $1 \le i \le [e/2]$, is a maximal simple type in $\mathrm{Aut}_F(V^i)$, from \cite[(7.6.3)]{BK1}, we have
\begin{equation*}
\mathcal{H}(\mathrm{Aut}_F(V^i), \lambda^{(i)}) \simeq \mathbb{C}[X,X^{-1}].
\end{equation*}
Put $e' = [e/2]$. Hence we obtain
\begin{eqnarray*}
\mathcal{H}(M, \lambda_{M,-}) &\simeq& \mathcal{H}(\mathrm{Aut}_F(V^1), \lambda^{(1)}) \otimes \cdots \otimes \mathcal{H}(\mathrm{Aut}_F(V^{e'}), \lambda^{(e')})\\
                              &\simeq& \mathbb{C}[X_1,X_1^{-1}] \otimes \cdots \otimes \mathbb{C}[X_{e'},X_{e'}^{-1}]\\
                              &\simeq& \mathbb{C}[X_1, \cdots, X_{e'};X_1^{-1}, \cdots, X_{e'}^{-1}].
\end{eqnarray*}
The proof is completed.
\end{proof}

\begin{prop}         
There is an injective homomorphism
\begin{equation*}
j_P: \mathcal{H}(M,\lambda_{M,-}) \to \mathcal{H}(G,\lambda_{P,-})
\end{equation*}
such that for $z \in \Vec{D}^-(\mathfrak{B})$ and $\phi \in \mathcal{H}(M,\lambda_{M,-})$ with support $(J_- \cap M)z$, the support of $j_P(\phi)$ is $J_{P,-}zJ_{P,-}$, and $j_P(\phi)(z) = \phi(z)$.
\end{prop}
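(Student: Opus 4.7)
The plan is to adapt the construction of the support-preserving algebra injection underlying the Bushnell--Kutzko theory of covers, as carried out in \cite[(7.6.17)]{BK1} and systematized in \cite{BK2}, to our classical group $G$. The first step is a \emph{strong positivity} property of $\Vec{D}^-(\mathfrak{B})$ relative to the Iwahori decomposition of $J_{P,-}$: for every $z \in \Vec{D}^-(\mathfrak{B})$ one has
\begin{equation*}
z(J_{P,-}\cap N_u)z^{-1} \subset J_{P,-}\cap N_u, \qquad z^{-1}(J_{P,-}\cap N_\ell)z \subset J_{P,-}\cap N_\ell,
\end{equation*}
together with $\bigcap_{k\ge 0} z^{-k}(J_{P,-}\cap N_u)z^k = \{1\}$ and $\bigcap_{k\ge 0} z^k(J_{P,-}\cap N_\ell)z^{-k} = \{1\}$. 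This follows by expressing $z$ in the Witt basis $\mathcal{V}$ of (5.2.3) and combining the block description of $\mathcal{L}_{\mathfrak{o}_E}$ in Lemma 2.2.3 with the Iwahori decompositions in Proposition 4.1.3.

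Since $\lambda_{P,-}$ is trivial on $J_{P,-}\cap N_u$ by Proposition 4.2.2, the positivity above forces, for each $z \in \Vec{D}^-(\mathfrak{B})$, a natural identification $I_z(\lambda_{P,-}) \simeq I_z(\lambda_{M,-})$ of one-dimensional spaces (a refinement of Proposition 6.1.1). So any $\phi \in \mathcal{H}(M,\lambda_{M,-})$ supported on $(J_-\cap M)z$ with $z \in \Vec{D}^-(\mathfrak{B})$ has a canonical lift to an element of $\mathcal{H}(G,\lambda_{P,-})$ supported on $J_{P,-}zJ_{P,-}$ and agreeing with $\phi$ at $z$. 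I would call this map $j_P$ on the \emph{positive} subalgebra $\mathcal{H}(M,\lambda_{M,-})^+$ spanned by such $\phi$ (this is an honest polynomial ring in $[e/2]$ variables, sitting inside the Laurent polynomial ring of Lemma 6.1.3), extend $\mathbb{C}$-linearly, and then extend $j_P$ to all of $\mathcal{H}(M,\lambda_{M,-})$ by inverting the images of the monoid generators of $\Vec{D}^-(\mathfrak{B})$.

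The main obstacle is to prove that $j_P$, so defined, is an algebra homomorphism. The key point is that for $z_1, z_2 \in \Vec{D}^-(\mathfrak{B})$ we have $z_1 z_2 \in \Vec{D}^-(\mathfrak{B})$, and the Iwahori decomposition of $J_{P,-}$ combined with the contraction properties above yields
\begin{equation*}
J_{P,-} z_1 J_{P,-} z_2 J_{P,-} = J_{P,-} z_1 z_2 J_{P,-}.
\end{equation*}
A direct convolution computation, modeled on the proof of \cite[(7.6.3), (7.6.17)]{BK1}, then gives $j_P(\phi_1)*j_P(\phi_2) = j_P(\phi_1*\phi_2)$ on the positive subalgebra; the consistent inversion of generators promotes this to a homomorphism on the full ring. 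Injectivity is immediate from the support condition, since by the Iwahori--Bruhat decomposition (5.2.1) the double cosets $J_{P,-} z J_{P,-}$ for $z \in \Vec{D}^-(\mathfrak{B})$ are pairwise distinct; hence any nonzero $\mathbb{C}$-linear combination of the basis elements $j_P(\phi_z)$ is supported on a non-empty disjoint union of such double cosets and therefore nonzero.
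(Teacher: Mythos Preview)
Your proposal is correct and follows essentially the same route as the paper: the paper's proof is a three-sentence sketch that invokes $\Vec{D}^-(\mathfrak{B}) \subset \Vec{W}(\sigma_-)$ and Proposition~6.1.1 to get one-dimensional intertwining at each $z$, and then says ``hence the proposition is proved in a quite similar way to the proof of \cite[(7.6.2)]{BK1}''; you have simply unpacked what that citation entails (strong positivity, the double-coset identity $J_{P,-}z_1J_{P,-}z_2J_{P,-}=J_{P,-}z_1z_2J_{P,-}$, definition on the positive cone, extension by localization). Two small remarks: the triviality of $\lambda_{P,-}$ on $J_{P,-}\cap N_u$ is recorded in the decomposed-pair conditions at the start of 6.2 rather than in Proposition~4.2.2 (which treats $\eta_{P,-}$); and the relevant Bushnell--Kutzko reference is \cite[(7.6.2)]{BK1}, not (7.6.17).
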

\begin{proof}
Identify $\mathcal{H}(G,\lambda_-) = \mathcal{H}(G,\lambda_{P,-})$ as above. Since $\Vec{D}^-(\mathfrak{B}) \subset \Vec{W}(\sigma_-)$, it follows from Proposition 6.1.1 that for each $z \in \Vec{D}^-(\mathfrak{B})$, there is a function of $\mathcal{H}(G,\lambda_{P,-})$ supported on $J_{P,-}zJ_{P,-}$. Hence the proposition is proved in a quite similar way to the proof of \cite[(7.6.2)]{BK1}.
\end{proof}

\subsection{}                 

Suppose that $(J_-,\lambda_-)$, with $\lambda_- = \kappa_- \otimes \sigma_-$, is a simple type in $G$ attached to a good skew simple stratum $[\mathfrak{A},n,0,\beta]$, with $\mathfrak{A} = \mathfrak{A}(\mathcal{L})$ principal. Let $P = MN_u$ be a parabolic subgroup $G$ associated with $[\mathfrak{A},n,0,\beta]$, and $(J_{P,-},\lambda_{P,-})$ the natural representation defined by $(J_-,\lambda_-)$. 
Then, from Corollary 5.6.4, there is an irreducible supercuspidal representation $\pi$ of $M$, which is of the form $\bigotimes^{e/2} \pi_0$, $\bigotimes^{(e-1)/2} \pi_0 \otimes \pi_1$, according to $e = e(\mathcal{L}_{\mathfrak{o}_E}) \equiv 0,\ 1\ (\mathrm{mod}\ 2)$, such that $(J_{P,-} \cap M,\lambda_{M,-})$ is an $[M,\pi]_M$-type in $M$.
Moreover, the representation satisfies the following conditions:
\begin{enumerate}
  \item $(J_{P,-},\lambda_{P,-})$ is a decomposed pair with respect to $(M,P)$, i.e., 
  \begin{equation*}
  J_{P,-} = (J_{P,-} \cap N_\ell)(J_- \cap M)(J_{P,-} \cap N_u),
  \end{equation*}
  and $\lambda_{P,-}$ is trivial on both $J_{P,-} \cap N_\ell$ and $J_{P,-} \cap N_u$.
  \item $\lambda_{M,-} = \lambda_{P,-} \vert (J_{P,-} \cap M)$.
\end{enumerate}

\begin{lem}        
Let notation and assumptions be as above. Then there is an invertible element, $\xi$, of $\mathcal{H}(G,\lambda_{P,-})$ supported on the double coset $J_{P,-}z_PJ_{P,-}$, where $z_P$ is an element of the center, $Z(M)$, of $M$, and $\xi$ is a strongly $(P,J_{P,-})$-positive element.
\end{lem}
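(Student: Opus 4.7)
The plan is to obtain $\xi$ as $j_P(\phi)$, where $\phi \in \mathcal{H}(M,\lambda_{M,-})$ is an invertible element supported on a single coset $(J_-\cap M)z_P$ for a suitable central element $z_P$, and $j_P$ is the injection of Proposition 6.1.4. This reduces the lemma to (a) choosing $z_P \in Z(M) \cap \Vec{D}^-(\mathfrak{B})$ that is strongly $(P,J_{P,-})$-positive, and (b) showing that invertibility of $\phi$ in the Laurent polynomial ring of Lemma 6.1.3 lifts, via $j_P$, to invertibility of $\xi$ in $\mathcal{H}(G,\lambda_{P,-})$.

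First, using the decomposition of $M$ in Lemma 4.1.2 and putting $t=[(e+1)/2]$, choose integers $n_1 > n_2 > \cdots > n_{[e/2]} > 0$ and define $z_P$ to act as $\varpi_E^{n_i}\mathrm{Id}_{V^i}$ on $V^i$ for $1 \le i \le [e/2]$, as the identity on $V^t$ when $t=(e+1)/2\in\mathbb{Z}$, and (in order to lie in $G$) as $\varpi_E^{-n_i}\mathrm{Id}_{V^{e-i+1}}$ on $V^{e-i+1}$. Because $z_P$ is scalar on each block of $M$, it lies in $Z(M)$; by construction its eigenvalue pattern on the Witt basis $\mathcal{V}$ of (2.2.2) matches the defining form of $\Vec{D}^-(\mathfrak{B})$, so $z_P \in \Vec{D}^-(\mathfrak{B})\cap Z(M)$.

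Next, I would verify strong positivity. For $x \in A^{ij}=\mathrm{Hom}_F(V^j,V^i)$ with $i<j$, conjugation gives $z_P x z_P^{-1} = \varpi_E^{n_i-n_j}x$ (and analogous expressions that mix $\pm n_k$ on the blocks $V^{e-k+1}$); the strict inequalities $n_1>\cdots>n_{[e/2]}>0$ and the symmetric assignment of exponents on $V^i$ versus $V^{e-i+1}$ force every such exponent to be strictly positive on $\mathbb{N}_u$ and strictly negative on $\mathbb{N}_\ell$. Hence $z_P^m(J_{P,-}\cap N_u)z_P^{-m}$ is arbitrarily deep in the filtration of $N_u$ as $m \to \infty$, and $z_P^{-m}(J_{P,-}\cap N_\ell)z_P^{m}$ is arbitrarily deep in the filtration of $N_\ell$; this is the strongly $(P,J_{P,-})$-positive property.

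For the construction of $\phi$ and the invertibility of $\xi$, Lemma 6.1.3 realizes $\mathcal{H}(M,\lambda_{M,-})\simeq \mathbb{C}[X_1^{\pm 1},\ldots,X_{[e/2]}^{\pm 1}]$, where each $X_i$ is an invertible Hecke element on the factor $\mathrm{Aut}_F(V^i)$ supported on the coset of a uniformizer $\varpi_E\mathrm{Id}_{V^i}$ (tensored with identities elsewhere). Taking $\phi = \phi_1 \otimes \cdots \otimes \phi_{[e/2]}$, where $\phi_i$ is supported on the coset of $\varpi_E^{n_i}\mathrm{Id}_{V^i}$, yields an invertible element of $\mathcal{H}(M,\lambda_{M,-})$ supported on $(J_-\cap M)z_P$. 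Set $\xi=j_P(\phi)$; by Proposition 6.1.4, $\xi$ is supported on $J_{P,-}z_P J_{P,-}$ with $\xi(z_P)=\phi(z_P)\ne 0$. The strong positivity of $z_P$ is exactly what is needed for the convolution support-calculus of BK1 \cite[(7.6.6)]{BK1}: for positive central $z$, the map $j_P$ restricted to the positive part of $\mathcal{H}(M,\lambda_{M,-})$ is a homomorphism into $\mathcal{H}(G,\lambda_{P,-})$ whose supports multiply cleanly with no lower-order cancellation, so the formal inverse of $\phi$ in the Laurent polynomial ring transports under $j_P$ to an actual convolution inverse of $\xi$ in $\mathcal{H}(G,\lambda_{P,-})$. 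The main obstacle is this last step, which requires transferring to $G$ the BK1 argument that positive Hecke elements are not zero divisors and have two-sided convolution inverses; the ingredients are all in place, via Proposition 6.1.2, Proposition 6.1.4, and the strong positivity just verified.
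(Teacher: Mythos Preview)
Your overall strategy---build an invertible $\phi\in\mathcal{H}(M,\lambda_{M,-})$ supported on a single coset of a strongly positive $z_P$, then push it through $j_P$---is exactly the paper's approach. There is, however, one genuine error.

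You claim that $z_P$, acting as $\varpi_E^{n_i}\,\mathrm{Id}_{V^i}$ on each block, lies in $Z(M)$ ``because $z_P$ is scalar on each block.'' But the blocks of $M$ are $\mathrm{Aut}_F(V^i)$ (Lemma~4.1.2), whose center is $F^\times$, not $E^\times$. Multiplication by $\varpi_E^{n_i}$ is an $E$-scalar, hence central in $\mathrm{Aut}_E(V^i)=B^\times\cap\mathrm{Aut}_F(V^i)$, but it is \emph{not} central in $\mathrm{Aut}_F(V^i)$ unless $\varpi_E^{n_i}\in F^\times$. Thus your $z_P$ is generally not in $Z(M)$, and the lemma's conclusion (and the strong-positivity notion from \cite{BK2}, which is defined for elements of $Z(M)$) fails for it. The fix, which is what the paper does, is to take each $n_i$ divisible by $e_0=e(E\vert F)$: concretely the paper sets $z_P=\Vec{a}_1^{e_0}\cdots\Vec{a}_{[e/2]}^{e_0}$, so that on $V^i$ the action is by $\varpi_E^{e_0 k_i}$, which (after absorbing an $\mathfrak{o}_E^\times$-factor into $J_-\cap M$) lies in $F^\times$.

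A minor point: your invertibility discussion is more elaborate than needed. Proposition~6.1.4 asserts that $j_P$ is an algebra homomorphism on all of $\mathcal{H}(M,\lambda_{M,-})$, not merely on a positive cone; since $j_P$ is unital (take $z=1\in\Vec{D}^-(\mathfrak{B})$), it sends units to units, and $\xi=j_P(\phi)$ is invertible with inverse $j_P(\phi^{-1})$ immediately. No separate support calculus or zero-divisor argument is required.
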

\begin{proof}
For an integer $j$, $1 \le j \le [e/2]$, we put
\begin{equation*}
\Vec{a}_j = \mathrm{Diag}(\varpi_EI, \cdots, \varpi_EI, I, \cdots, I, \varpi_E^{-1}I, \cdots, \varpi_E^{-1}I),
\end{equation*}
where $\varpi_EI$ (resp. $\varpi_E^{-1}I$) appears $j$ times. Then these are elements of $\Vec{D}^-(\mathfrak{B})$, and for each an integer $i$, $1 \le i \le [e/2]$, there is a non-zero function $X_i$ in $\mathcal{H}(M,\lambda_{M,-})$ supported on $(J_{P,-} \cap M)\Vec{a}_i$, as in the proof of \cite[(7.6.2)]{BK1}. This element $X_i$ is the same as that of Lemma 6.1.3 (cf. \cite[p. 245]{BK1}) and is invertible in $\mathcal{H}(M,\lambda_{M,-})$. Put $e_0 = e(E \vert F)$ and
\begin{equation*}
Z_P = X_1^{e_0}X_2^{e_0} \cdots X_{[e/2]}^{e_0}
\end{equation*}
in $\mathcal{H}(M,\lambda_{M,-})$. Then the function $Z_P$ is supported on $(J_{P,-} \cap M)z_P$, with $z_P = \Vec{a}_1^{e_0}\Vec{a}_2^{e_0} \cdots \Vec{a}_{[e/2]}^{e_0}$, and it is invertible in $\mathcal{H}(M,\lambda_{M,-})$. It is easy to see $z_P \in Z(M)$.  Put $\xi = j_P(Z_P) \in \mathcal{H}(G,\lambda_{P,-})$. Then it follows from Proposition 6.1.4 that the function $\xi$ is supported on $J_{P,-}z_PJ_{P,-}$ and is invertible. Hence the proof is completed.
\end{proof}

\begin{thm}        
Let $[\mathfrak{A},n,0,\beta]$ be a good skew simple stratum in $A$, with $\mathfrak{A}$ principal, and $(J_-,\lambda_-)$ a simple type in $G$ attached to $[\mathfrak{A},n,0,\beta]$. 
Let $(J_{P,-},\lambda_{P,-})$ be the representation defined in Definition 5.6.2 from $(J_-,\lambda_-)$, and $\pi$ an irreducible supercuspidal representation of $M$ as in Corollary 5.6.4. Then $(J_{P,-},\lambda_{P,-})$ is an $[M,\pi]_G$-type in $G$, and so is $(J_-,\lambda_-)$.
\end{thm}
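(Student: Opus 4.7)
The plan is to apply the Bushnell--Kutzko theory of $G$-covers (see \cite[(8.3)]{BK2}) to deduce the result from Corollary 5.6.4, which already gives that $(J_{P,-} \cap M, \lambda_{M,-})$ is an $[M,\pi]_M$-type in $M$. By that theory it suffices to verify that $(J_{P,-}, \lambda_{P,-})$ is a $G$-cover of $(J_{P,-} \cap M, \lambda_{M,-})$ in the sense of \cite{BK2}, i.e., to check the three axioms: (a) $(J_{P,-}, \lambda_{P,-})$ is a decomposed pair relative to $(M, P)$; (b) $\lambda_{M,-} = \lambda_{P,-}\vert (J_{P,-} \cap M)$; (c) there exists an invertible, strongly $(P, J_{P,-})$-positive element in $\mathcal{H}(G, \lambda_{P,-})$.

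First I would recall that (a) and (b) have just been recorded in the opening paragraph of 6.2 (conditions (1) and (2)): the decomposed structure $J_{P,-} = (J_{P,-} \cap N_\ell)(J_- \cap M)(J_{P,-} \cap N_u)$ follows from the Iwahori decomposition in Proposition 4.1.3 applied to $J_-^1$ and from the definition $J_{P,-} = (J_- \cap P)H_-^1$, together with the fact that $\lambda_{P,-}$ is defined on $(J_- \cap N_u)$-fixed vectors and hence is trivial on $J_{P,-} \cap N_u$, while the corresponding triviality on $J_{P,-} \cap N_\ell$ follows from the construction of $\eta_{P,-}$ in Proposition 4.2.2 combined with the extension procedure producing $\kappa_{P,-}$. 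Condition (b) is immediate from Definition 5.6.2.

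For (c), I would take the invertible element $\xi = j_P(Z_P) \in \mathcal{H}(G, \lambda_{P,-})$ constructed in Lemma 6.2.1, which is supported on a single double coset $J_{P,-} z_P J_{P,-}$ with $z_P \in Z(M)$. Its strong positivity relative to $(P, J_{P,-})$ follows directly from the choice $z_P = \Vec{a}_1^{e_0}\cdots \Vec{a}_{[e/2]}^{e_0}$, since each $\Vec{a}_j$ lies in $\Vec{D}^-(\mathfrak{B})$ and therefore satisfies $\Vec{a}_j(J_{P,-} \cap N_u)\Vec{a}_j^{-1} \subset J_{P,-} \cap N_u$ and $\Vec{a}_j^{-1}(J_{P,-} \cap N_\ell)\Vec{a}_j \subset J_{P,-} \cap N_\ell$, and taking the power $e_0 = e(E\vert F)$ makes these inclusions as strict as one wishes by passing to sufficiently high filtration depth. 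Together with the invertibility statement of Lemma 6.2.1, this verifies (c).

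Granting (a), (b), (c), the cover criterion of \cite[(8.3)]{BK2} immediately yields that $(J_{P,-}, \lambda_{P,-})$ is an $[M,\pi]_G$-type in $G$. For the final assertion about $(J_-, \lambda_-)$, I would use Theorem 5.6.3 (1), which gives $\lambda_- \simeq \mathrm{Ind}(\lambda_{P,-}: J_{P,-}, J_-)$; by transitivity of induction and Frobenius reciprocity, a smooth irreducible representation of $G$ contains $\lambda_-$ if and only if it contains $\lambda_{P,-}$, so the two pairs define the same full subcategory of the Bernstein component, and $(J_-, \lambda_-)$ is also an $[M,\pi]_G$-type. The main obstacle, already surmounted in Lemma 6.2.1, is the construction of the strongly positive invertible element; the subtlety there is that passing from the commutative Hecke algebra $\mathcal{H}(M, \lambda_{M,-})$ to $\mathcal{H}(G, \lambda_{P,-})$ via $j_P$ (Proposition 6.1.4) requires careful control of supports, which in turn rests on the hard intertwining bound of Proposition 5.5.1.
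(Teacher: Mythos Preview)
Your proposal follows essentially the same route as the paper: verify the cover axioms via conditions (1), (2) and Lemma 6.2.1, then invoke \cite[(8.3)]{BK2}, and finally transfer to $(J_-,\lambda_-)$ via Theorem 5.6.3\,(1).

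There is one technical point the paper handles more carefully than you do. The original definition of $G$-cover in \cite[(8.1)]{BK2} requires the invertible strongly positive element to exist for \emph{every} parabolic subgroup of $G$ with Levi component $M$, not only for the fixed $P$ associated with the stratum; you only exhibit such an element for $P$ itself (Lemma 6.2.1 gives a strongly $(P,J_{P,-})$-positive element, nothing more). The paper closes this gap by first applying \cite[(7.9)]{BK2} to the data (1), (2) and Lemma 6.2.1, which yields the Jacquet-functor injectivity statement (condition (3) in the paper's proof), and then invoking the modification of the cover definition due to Bushnell \cite{Bu} (cf.\ \cite[Introduction]{Bl}) asserting that conditions (1), (2), (3) for a \emph{single} parabolic already force $(J_{P,-},\lambda_{P,-})$ to be a $G$-cover. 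Without this intermediate step your argument does not literally verify the hypotheses of \cite[(8.1)]{BK2} as stated, so you should either cite \cite{Bu} for the one-parabolic reduction or supply the positive elements for the opposite parabolic as well.
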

\begin{proof}
From the conditions (1), (2) and Lemma 6.2.1, $(J_{P,-},\lambda_{P,-})$ satisfy the hypotheses of \cite[(7.9)]{BK2}. Thus, (3) for any smooth irreducible representation $(\mu, \mathcal{V})$ of $G$, the restriction to $\mathcal{V}^{\lambda_{P,-}}$ of the Jacquet functor $r_u$ is injective. 
The definition of $G$-cover, given in \cite[(8.1)]{BK2}, is modified so that if the conditions (1), (2) and (3) are satisfied for one parabolic subgroup $P$, then $(J_{P.-},\lambda_{P,-})$ is a $G$-cover of $(J_{P,-} \cap M,\lambda_{M,-})$ (see \cite[Introduction]{Bl}). This modification follows from \cite{Bu}. 
Since $(J_{P,-} \cap M,\lambda_{M,-})$ is an $[M,\pi]_M$-type in $M$, the theorem follows from \cite[(8.3)]{BK2}. Moreover, since 
\begin{equation*}
\lambda_- \simeq  \mathrm{Ind}(\lambda_{P,-}:J_{P,-}, J_-)
\end{equation*}
by Theorem 5.6.3 (1), it is easy to see that $(J_-,\lambda_-)$ is also an $[M,\pi]_G$-type in $G$ (cf. \cite[5.3]{Sc3}).
\end{proof}

{\bf Acknowledgements}

I would like to thank the referee for pointing out many mistakes of the previous draft of the manuscript and giving me useful suggestions for the proofs of Lemma 5.2.2 and Propositions 6.1.1 and 6.2.2, and Dr Shin-ichi Kato for his advice of the arguments of 5.4.

\renewcommand{\labelenumi}{\theenumi}

\vspace{4mm}

{\small {\it Department of Economics, Management

and Information Science

Onomichi University

Onomichi 722-8506, JAPAN

e-mail: kariyama@onomichi-u.ac.jp}


\begin{thebibliography}{99}


\bibitem{Ad1} Adler J.D.: {\it Self-contragradient supercuspidal representation of $GL_n$}, Proc. Amer. Math. Soc.(8) {\bf 125}(1997) 2471-2479.
\bibitem{Ad2} Adler J.D.: {\it Refined anisotropic $K$-types and supercuspidal representations}, Pacific J. Math. {\bf 185} no. 1 (1998) 1-32.
\bibitem{Bl} Blondel C.: {\it $Sp(2N)$-Covers for self-contragradient supercuspidal representations of $GL(N)$}, Ann. Sci. \'Ecole. Norm. Sup. (4) {\bf 37}(2004) 533-558.
\bibitem{Bu} Bushnell C. J.: {\it Representations of reductive $p$-adic groups: localization of Hecke algebras and applications}, J. London Math. Soc. (2) {\bf 63}(2001) 364-386.
\bibitem{BK1} Bushnell C. J. and Kutzko P.:  {\it The Admissible Dual of GL($N$) Via Compact Open Subgroups}, Ann. Math. Stud. 129, Princeton Univ. Press 1993.
\bibitem{BK2} Bushnell C. J. and Kutzko P.: {\it Smooth representations of reductive $p$-adic groups: structure theory via types}, Proc. London Math. Soc. {\bf 77} (1998) 582-634.
\bibitem{BK3} Bushnell C. J. and Kutzko P.: {\it Semisimple types in $GL_n$}, Compositio Math. {\bf 119} (1999) 53-97.
\bibitem{BT} Bruhat F. and Tits J.: {\it Groupes reductifs sur un corps local, I:Donnees radicielles valuees,} Publ. Math. I.H.E.S. {\bf 41} (1972) 5-252.

\bibitem{Cy} Carayol H.: {\it Repr\'esentations cuspidales du groupe lin\'eaire}, Ann. Sci. \'Ecole Norm. Sup. (4) {\bf 17} (1984) 191-225.
\bibitem{Ca} Carter R. W.: {\it Finite groups of Lie type, Conjugacy Classes and Complex Characters}, Wiley-Interscience, 1985.
\bibitem{Ca2} Carter R. W.: {\it Simple groups of Lie type,} John Wiley and Sons Ltd, 1972.
\bibitem{Cs} Casselmann. W.: {\it Introduction to the theory of admissible representations of $\mathfrak{p}$-adic reductive groups}, preprint, University of British Columbus, 1974.
\bibitem{DL} Deligne P. and Lusztig G.: {\it Representations of reductive groups over finite fields,} Ann. Math. {\bf 103} (1976) 103-161.
\bibitem{GK} Gelfand I.M. and Kazhdan D.A.: {\it Representations of the group $GL(n,K)$ where $K$ is a local field, in: Lie Groups and the Representations (Proc. Summer School of the Bolya-Janos Math. Soc., Budapest, 1971),} Halsted, New York, 1975.
\bibitem{HL} Howellet R.B. and Lehrer G.I.: {\it Induced cuspidal representations and generalized Hecke rings}, Invent. Math. {\bf 58} (1980) 37-64.
\bibitem{IM} Iwahori N. and Matsumoto H.: {\it On some Bruhat decomposition and the structure of the Hecke rings of $p$-adic Chevalley groups,} Publ. Math. I.H.E.S. {\bf 25} (1965) 5-48.
\bibitem{Ka} Kariyama K.: {\it Very cuspidal representations of $p$-adic symplectic groups}, J. Algebra {\bf 207} (1998) 205-255.
\bibitem{Mo1} Morris L.: {\it Tamely ramified supercuspidal representations of symplectic groups}, Proc. London Math. Soc. (3) {\bf 63} (1991) 519-551.
\bibitem{Mo2} Morris L.: {\it Tamely ramified superecuspidal representations of classical groups. I. Filtrations}, Ann. Sci. \'Ecole. Norm. Sup. (4) {\bf 24}(1991) 705-738.
\bibitem{MP} Moy A. and Prasad. P.: {\it Unramified minimal $K$-types for $p$-adic groups}, Invent. Math. {\bf 116} (1994), 393-408.
\bibitem{Ro} Roche A.: {\it Types and Hecke algebras for principal series representations of split reductive $p$-adic groups}, Ann. Sci. \'Ecole Norm. Sup. (4) {\bf 34} (1998) 361-423.
\bibitem{Se} Serre J.-P.: {\it Cohomologie Galoisienne}, Lecture Notes in Mathematics, {\bf 5}, Springer, Berlin, 1964.
\bibitem{Sc1} S\'echerre V.: {\it Repr\'esentations lisse de $GL(m,D)$, I : caracter\`es simples}, Bull. Soc. Math. France {\bf 132} (3) (2004), 327-396.
\bibitem{Sc2} S\'echerre V.: {\it Repr\'esentations lisse de $GL(m,D)$, II : $\beta$-extensions}, Compositio Math. {\bf 141} (2005) 1531-1550.
\bibitem{Sc3} S\'echerre V.: {\it Repr\'esentations lisse de $GL(m,D)$, III : types simples}, Ann. Sci. \'Ecole Norm. Sup. (4) {\bf 38} (2005) 951-977.
\bibitem{SS} Springer T. A. and Steinberg R.: {\it Conjugacy classes, in Seminar on algebraic groups and related finite groups, eds A. Borel and et al}, Lecture Notes in Mathematics, {\bf 131}, Springer, Berlin, 1970.
\bibitem{St} Steinberg R.: {\it Lectures on Chevalley groups}, Yale University, 1967.
\bibitem{St1} Stevens S.: {\it Double coset decompositions and intertwining},  Manuscripta Math. {\bf 106}(3) (2001) 349-364.
\bibitem{St2} Stevens S.: {\it Intertwining and supercuspidal types for $p$-adic classical groups}, Proc. London Math. Soc. (3) {\bf 83} (2001) 120-140.
\bibitem{St3} Stevens S.: {\it Semisimple characters for $p$-adic classical groups},  Duke Math. J. {\bf 127} no.1 (2005) 123-173.
\bibitem{Ti} Tits J.: {\it Reductive groups over local fields,} in Proc. Pure Math. {\bf 33}, Vol. 1, AMS, Providence, 1979, 29-69
\bibitem{Yu} Yu J.-K.: {\it Construction of tame supercuspidal representations}, J. Amer. Math. Soc. {\bf14} (3) (2001) 579-622.
\bibitem{We} Weil A.: {\it Basic Number Theory}, Grundlehren Math. Wiss., Bd. {\bf 144}, Springer-Verlag, Berlin and New York, 1997.
\end{thebibliography}
\end{document}